\documentclass[pdflatex,sn-mathphys-num]{sn-jnl}

\usepackage{graphicx}%
\usepackage{multirow}%
\usepackage{amsmath,amssymb,amsfonts}%
\usepackage{amsthm}%
\usepackage{mathrsfs}%
\usepackage[title]{appendix}%
\usepackage{xcolor}%
\usepackage{textcomp}%
\usepackage{manyfoot}%
\usepackage{booktabs}%
\usepackage{algorithm}%
\usepackage{algorithmicx}%
\usepackage{algpseudocode}%
\usepackage{listings}%

\usepackage{amsmath}    
\usepackage{amssymb}    

\usepackage{mathtools}  

\theoremstyle{thmstyleone}%
\newtheorem{theorem}{Theorem}

\theoremstyle{thmstyletwo}%
\newtheorem{remark}{Remark}%

\theoremstyle{thmstylethree}%
\usepackage{amssymb}
\usepackage{amsthm}
\usepackage{tikz}
\usepackage{makeidx}         
\usepackage{graphicx}        
\usepackage{multicol}        
\usepackage[bottom]{footmisc}

\usepackage{newtxtext}       %
\usepackage{newtxmath}       
\usepackage{multirow}
\usepackage{diagbox}

\usepackage{xcolor}

\usepackage{cleveref}
\usepackage{booktabs}
\usepackage{subcaption}

\definecolor{darkgreen}{rgb}{0.0,0.4,0.0}
\definecolor{darkred}{rgb}{0.6,0.0,0.0}
\definecolor{darkblue}{rgb}{0.0,0.0,0.5}
\definecolor{gray}{rgb}{0.5,0.5,0.5}
\definecolor{cyan}{rgb}{0.0,1.0,1.0}
\definecolor{darkcyan}{rgb}{0.0,0.5,0.5}
\definecolor{darkorange}{rgb}{0.8,0.4,0.0}
\definecolor{darkmargenta}{rgb}{0.5,0.0,0.5}
\definecolor{black}{rgb}{0.0,0.0,0.0}

\usepackage{float} 
\usepackage{bm}
\usepackage{comment}

\makeindex             

\def \om {\omega}

\def \calA {\mathcal{A}}

\def \div {\nabla\cdot}

\def \bmv {\bm{v}}

\def \bmw {\bm{w}}

\newcommand{\ip}[2]{\langle #1, #2\rangle}

\newcommand{\Ree}{\operatorname{Re}}

\newtheorem{corollary}[theorem]{Corollary}
\newtheorem{lemma}[theorem]{Lemma}
\theoremstyle{definition}

\newcommand{\vertiii}[1]{{\left\vert\kern-0.17ex\left\vert\kern-0.17ex\left\vert #1 
		\right\vert\kern-0.17ex\right\vert\kern-0.17ex\right\vert}}

\makeatletter
\newsavebox{\@brx}
\newcommand{\llangle}[1][]{\savebox{\@brx}{\(\m@th{#1\langle}\)}%
	\mathopen{\copy\@brx\kern-0.5\wd\@brx\usebox{\@brx}}}
\newcommand{\rrangle}[1][]{\savebox{\@brx}{\(\m@th{#1\rangle}\)}%
	\mathclose{\copy\@brx\kern-0.5\wd\@brx\usebox{\@brx}}}
\makeatother

\usepackage{mdframed}
\usepackage{array}
\usepackage{placeins}
\newcolumntype{C}[1]{>{\centering\arraybackslash}p{#1}}
\usepackage{nomencl}
\makenomenclature

\usepackage{etoolbox}
\renewcommand\nomgroup[1]{%
	\item[\bfseries
	\ifstrequal{#1}{A}{Physical quantities}{%
		\ifstrequal{#1}{N}{Space and time}{%
			\ifstrequal{#1}{B}{Function spaces}{%
				\ifstrequal{#1}{C}{Discretization}{%
					\ifstrequal{#1}{K}{Abbreviations}{}}}}}%
	]}

\usepackage[leftcaption]{sidecap}
\begin{document}
	
	\title{Frequency Domain Biot--Allard Equations for Isotropic and Anisotropic Poroelastic Media: Two-field formulations and iterative splitting}

	\author*[1]{\fnm{Morten} \sur{Jakobsen}}\email{morten.jakobsen@uib.no}
	
	\author[2]{\fnm{Jakob Seierstad} \sur{Stokke}}\email{jakob.stokke@uib.no}
	
	\author[2]{\fnm{Kundan} \sur{Kumar}}\email{kundan.kumar@uib.no}

	\author[2]{\fnm{Florin Adrian} \sur{Radu}}\email{florin.radu@uib.no}

	\affil*[1]{\orgdiv{Center for Modeling of Coupled Subsurface Dynamics}, \orgname{Department of Earth Science, University of Bergen}, \orgaddress{\street{ Allegaten 41}, \city{Bergen}, \country{Norway}}}
	
	\affil[2]{\orgdiv{Center for Modeling of Coupled Subsurface Dynamics}, \orgname{Department of mathematics, University of Bergen}, \orgaddress{\street{ Allegaten 41}, \city{Bergen}, \country{Norway}}}

	\abstract{
	We present a frequency-domain formulation of Biot’s dynamic poroelastic equations with frequency-dependent dissipation (Biot–Allard) for anisotropic, heterogeneous media with memory effects. Two equivalent two-field representations—a displacement–pressure and a velocity–pressure-rate formulation—enable stabilized iterative splitting. While coupling operators generally lack an adjoint or skew-adjoint relationship at finite frequencies, the velocity–pressure-rate representation restores a skew-adjoint structure in the quasi-\color{black}static limit. We prove continuity of the coupling operators and coercivity of the diagonal blocks, essential for convergence of the L-stabilized splitting scheme. The frequency-domain setting eliminates convolutional memory terms, incorporates attenuation and dispersion via complex-valued parameters, and reduces the time-dependent problem to a family of elliptic boundary-value problems suited for parallel computation and multi-frequency inversion. A conforming Galerkin finite element discretization preserves block structure, and numerical experiments confirm robustness and capture frequency-dependent attenuation. To illustrate discretization independence, we include a large-scale wave simulation using a pseudo-spectral method. This work provides a rigorous and efficient framework for modeling wave phenomena in complex porous media.}
	
		\keywords{Biot-Allard equations, Poroelasticity, Iterative coupling, FEM, PSM, Dynamic permeability}

	\maketitle

	\section{Introduction}

	Porous media play a fundamental role in geosciences because they govern the storage and movement of fluids in the Earth’s crust. In fluid-saturated rocks, mechanical and hydraulic processes are strongly coupled under dynamic conditions, giving rise to phenomena such as wave-induced fluid flow, which significantly affects seismic wave attenuation and dispersion~\cite{pride2004, muller2010seismic}. These interactions are central to understanding subsurface behavior in applications ranging from hydrocarbon recovery and carbon storage to geothermal energy and groundwater management. The complexity of these systems arises from the interplay between pore-scale physics and large-scale geological heterogeneity, which together influence the dynamic response and seismic wave characteristics. Beyond geosciences, similar coupled processes occur in biological tissues, advanced materials, and engineered systems, underlining the interdisciplinary relevance of porous-media modeling. 
	Robust computational methods that are capable of dealing with the multi-scale nature of the governing equations are required for the development of predictive models for such systems. 
	
	Maurice A. Biot laid the foundation for such modeling in the 1940s with his theory of consolidation~\cite{Biot1941}, later extending it to a dynamic theory for the propagation of elastic waves in fluid-saturated porous solids during the 1950s and 1960s~\cite{Biot1956a,Biot1956b,Biot1962}. His theory predicted the existence of two compressional waves—a fast and a slow wave—alongside a shear wave. These predictions were initially met with skepticism but were later confirmed experimentally, most notably by Plona~\cite{Plona1980} in water-saturated sintered glass beads, and subsequently in natural porous media such as sandstones~\cite{Kelder1997, bouzidi2009}. These experimental validations helped establish Biot’s theory as a cornerstone of wave-centric modeling in porous media.
	
	Subsequent extensions, including the work of Allard~\cite{Allard1993}, introduced viscoelastic effects and memory terms to account for frequency-dependent dissipation mechanisms. While Allard's contributions were primarily experimental, the term \emph{Biot--Allard equations} has come to denote Biot's dynamic theory augmented with empirically and theoretically justified frequency-dependent dissipation and dynamic-permeability models, such as the Johnson--Koplik--Dashen (JKD) formulation~\cite{Johnson1987}. The mathematical consistency of these dynamic models and their connection to the quasi-static Biot system have been clarified by Mikeli\'c and Wheeler~\cite{Mikelic2012}, but from a flow-centric perspective and without any numerical results. 
	
	From a computational perspective, time-domain formulations of Biot-type equations are widely used~\cite{carcione2010, Masson2010,Huang2022b, santos1988a}.  Robust and efficient splitting schemes have traditionally been developed to decouple flow and deformation components in the quasi-static, linear Biot system ~\cite{Both2017, Kim2009, kim_undrained2011, mikelic2013, storvik2019}. Extensions to the nonlinear Biot model or the dynamic Biot can be found in \cite{borregales2018, borregales2021, bause2020, kimPHD, kim_2025}. In the computational seismology community, Strang splitting schemes, that are commonly used in quantum mechanics for separating kinetic and potential operators in Schrödinger dynamics, have been adapted to the time-domain Biot equations. In this approach (also referred to as the partitioning method), the evolution operator is decomposed into wave-propagation and diffusive-flow components and integrated using a symmetric sequence of substeps, achieving second-order accuracy and improved stability in seismic wave simulations~\cite{Carcione2022,carcione1995}. While Strang splitting provides a rigorous operator-theoretic foundation based on semigroup theory, it is inherently a time-domain technique. Since our analysis is in the frequency domain, it is more natural to generalize splitting strategies from the flow-centric quasi-static Biot formulation rather than adopt time-domain methods directly. 
	
	Frequency-domain formulations provide an efficient alternative to time-domain approaches for linear partial differential equations, including the dynamic Biot--Allard equations \cite{LiuGreenhalgh2019, Zhan2021, wang2024modelling, jiang2025modelling, yang2021frequency}. Applying a temporal Fourier transform replaces the time derivative $\partial_t$ with an algebraic factor $i\omega$ (depending on the sign convention), reducing the original evolution problem to a family of spatial boundary-value problems parameterized by frequency. For the Biot--Allard system, this transformation converts the coupled hyperbolic--parabolic equations of the time domain into a system of coupled elliptic PDEs in the frequency domain, which is often more convenient for analysis and computation. This representation incorporates attenuation and dispersion through complex-valued material parameters~\cite{jakobsen2003acoustic, jakobsen2009unified}, eliminating convolutional memory terms or fractional derivatives \cite{Carcione2022}. It is particularly advantageous for problems with time-harmonic sources, where the solution is inherently periodic~\cite{Zheng2013, Hu2019}. Frequency-domain methods also simplify the implementation of radiation and absorbing boundary conditions, essential for unbounded domains~\cite{Zhan2021}. They align with the spectral nature of many geophysical data sets and enable multi-frequency inversion strategies that mitigate nonlinearity~\cite{pratt1999seismic, sirgue2004efficient, jakobsen2020transition, yang2021frequency}. Their close connection to Green’s function representations supports integral-equation methods for efficient modeling of seismic scattering in heterogeneous poroelastic media~\cite{Kanaun2018, Huang2024}. Computationally, the ability to solve each frequency independently makes the approach highly parallelizable and well-suited for modern high-performance computing architectures~\cite{LiuGreenhalgh2019, ma2024cost}. These features make frequency-domain formulations an attractive choice for forward modeling and inversion in complex, attenuating media~\cite{LiuGreenhalgh2019, Zhan2021, Kanaun2018, Huang2024, yang2021frequency}.
	
	Among these advantages, the treatment of memory effects is particularly significant. When memory effects are included, time-domain approaches become computationally expensive due to convolutions with long memory kernels or fractional derivatives, as in the JKD model~\cite{Johnson1987,Lu2004}. These operations require storing long histories of particle velocities or solving auxiliary differential equations, significantly increasing cost and complexity~\cite{stokkeEnumath,stokke2024a, stokke_convergence_2025}. In contrast, frequency-domain formulations reduce these convolutions to simple multiplications, facilitating the incorporation of frequency-dependent material properties and streamlining numerical implementation~\cite{Kanaun2018,Huang2024,Carcamo2024}. This advantage has motivated growing interest in frequency-domain modeling for seismic applications~\cite{LiuGreenhalgh2019, Zhan2021, Kanaun2018, Huang2024, yang2021frequency}, where accurate representation of attenuation mechanisms—such as Biot diffusion, squirt flow, and mesoscopic fluid exchange—is essential~\cite{Carcione1998,pride2004}. However, the frequency-domain Biot–Allard system is strongly coupled, indefinite, and complex-valued, complicating stability analysis and solver design.
	
	In this work we will develop and analyse an iterative splitting scheme for solving the dynamic Biot--Allard equations in the frequency domain. To enable a rigorous convergence analysis, we introduce a two-field representation in terms of solid velocity and pressure rate, which restores a skew-adjoint structure in the quasi-static limit and provides a natural Hilbert-space framework for energy estimates. While our main numerical examples for 2D isotropic poroelastic media use the finite element method (FEM) due to its compatibility with complex geometries and operator-theoretic analysis, the proposed L-stabilized splitting scheme operates at the continuous-operator level and is therefore discretization-agnostic. This flexibility allows integration with alternative spatial discretizations, as demonstrated by a large-scale 3D anisotropic example using a pseudo-spectral method (PSM).

	We would like to further emphasize that while L-scheme type splittings are well established for quasi-static, flow-centric poromechanics, our focus is on wave propagation in porous media with memory effects arising from frequency-dependent dissipation mechanisms, such as those introduced through dynamic permeability in Darcy's law. These effects are conveniently handled in the frequency domain, where memory terms are encoded through complex-valued, frequency-dependent coefficients. The resulting Biot--Allard system leads to a coupled elliptic problem with complex-valued fields that must be analyzed in complex Hilbert spaces, giving rise to a coupled block operator system in which standard structural assumptions cannot be taken for granted. In particular, coercivity of the diagonal operators cannot be assumed uniformly in frequency and must be established under additional conditions, and the coupling operators do not, in general, exhibit a simple skew-adjoint or anti-symmetric structure. As a consequence, the operator cannot be treated within the classical energy framework, and the convergence analysis of L-scheme splittings becomes significantly more involved.
	
	To summarize, the main contributions of this paper are as follows: (1) the design and analysis  of a stabilized splitting scheme in the frequency domain, based on a velocity--pressure-rate formulation that restores skew-adjoint structure in the quasi-static limit for the Biot-Allard equations; (2) a general formulation for anisotropic, heterogeneous poroelastic media with memory effects, allowing frequency-dependent and complex-valued permeability and stiffness tensors, with the Johnson--Koplik--Dashen (JKD) model as a representative example; and (3) a wave-centric, discretization-agnostic scheme demonstrated through both finite element simulations and a large-scale three-dimensional anisotropic pseudo-spectral example with a matrix-free, FFT-based implementation. 
	
	
	\color{black}
	
	The paper is organized as follows. In \Cref{sec: 2} three formulations of the dynamic Biot--Allard equations in the frequency domain are introduced and a velocity--pressure-rate representation restoring a skew-adjoint structure in the quasi-static limit is derived. \Cref{sec:operator-weak} develops an operator-theoretic framework with a weak formulation and a Galerkin discretization preserving block structure. In \Cref{sec: 4} we present the stabilized iterative splitting scheme and analyze the convergence under coercivity and continuity conditions. \Cref{sec: numerics} reports numerical experiments in homogeneous and layered media using FEM, followed by a large-scale example based on a PSM to illustrate applications to anisotropic poroelastic media as well as scalability and discretization independence of the stabilized splitting strategy. Section 6 \color{black} summarizes findings and future directions. Appendix A gives the Biot-Allard equations in the time domain. Appendix B discuss some useful identities used in the convergence analysis of the iterative splitting scheme. Appendix C gives coercivity estimates for diagonal blocks; Appendix D explains how fluid mobility terms break skew-adjointness beyond the quasi-static limit and proves continuity properties essential for stability.  Appendix E contains a brief summary of the central ideas behind the PSM and a discussion about how it can be used for matrix-free implementation of the splitting scheme in the case of large-scale 3D models.

	\section{Frequency-Domain Formulations of Dynamic Biot Theory}\label{sec: 2}
	
	\subsection{The Original Four-Field Representation of Biot}

	We start with a formulation equivalent to Biot’s dynamic theory (1962) \cite{Biot1962}, but expressed in a form that makes comparison with alternative representations more transparent. The primary variables are the solid displacement \( \mathbf{u} \), the relative fluid–solid displacement
	\begin{equation}
		\mathbf{w} = \phi(\mathbf{U}-\mathbf{u}),
	\end{equation}
	and the pore pressure \( p \), where \( \phi \) denotes the porosity. Biot’s original formulation was written in terms of the absolute fluid displacement \( \mathbf{U} \) and the solid displacement \( \mathbf{u} \), but he also introduced a related quantity representing relative motion, although not as a primary variable in the same sense as here. Four-field representations involving the \( \mathbf{w} \)-field are widely used in numerical simulations of acoustic and seismic wave propagation in poroelastic media \cite{Carcione2022}.

	We work in the frequency domain, but the underlying governing equations in the time-domain are for completeness shown in Appendix A. 
	Our analysis assumes a fully saturated porous medium composed of a linearly elastic solid matrix and a single-phase Newtonian fluid, under small-strain kinematics. Thermal, chemical, and multiphase effects are neglected. To formulate the governing equations, we adopt the harmonic convention \( e^{i\omega t} \). All field variables are understood as complex amplitudes, and explicit frequency dependence is suppressed except for the dynamic mobility tensor \( \mathbf{B}(\omega) \) and the drained stiffness tensor \( \mathbf{C}(\omega) \), which may exhibit frequency dependence due to viscous and viscoelastic effects. Coefficient fields may be complex-valued to represent inertial and viscous dispersion, and all parameters may vary spatially and exhibit anisotropy.
	
	The governing equations consist of momentum balance for the solid and fluid phases, mass conservation, and constitutive relations. In the frequency-domain, the momentum balance for the solid phase is expressed as
	\begin{equation}
		\nabla \cdot \boldsymbol{\sigma} + \omega^2\big[\rho\,\mathbf{u} + \rho_f\,\mathbf{w}\big] + \mathbf{f} = {\bf 0},
		\label{eq:solid_momentum_mod}
	\end{equation}
	where  \( \boldsymbol{\sigma} \) is the total stress tensor, \( \mathbf{f} \) is an external body-force density and
	\( \rho = (1-\phi)\rho_s + \phi\rho_f \) is the bulk density.
	
	The fluid momentum equation generalizes Darcy’s law to include dynamic effects and takes the form
	\begin{equation}
		i\omega\,\mathbf{w} + \mathbf{B}(\omega)\cdot\big[\nabla p - \omega^2\rho_f\,\mathbf{u}\big] = \mathbf{0},
		\label{eq:fluid_momentum_mod}
	\end{equation}
	where \( \mathbf{B}(\omega)=\boldsymbol{\kappa}(\omega)/\eta \) is the dynamic mobility tensor, expressed in terms of the frequency-dependent permeability tensor \( \boldsymbol{\kappa}(\omega) \) and the fluid viscosity \( \eta \). The dynamic mobility tensor $\mathbf{B}(\omega)$ is a complex, symmetric (Onsager) second-order tensor, $\mathbf{B}(\omega)=\mathbf{B}(\omega)^{\top}$, that is generally non-Hermitian, $\mathbf{B}(\omega)\neq\mathbf{B}(\omega)^{\dagger}$; in the JKD model it is frequency dependent and passive, with $\operatorname{Re}\!\big[\mathbf{v}^{*}\!\cdot\!\mathbf{B}(\omega)\!\cdot\!\mathbf{v}\big]\ge 0$ for all vectors $\mathbf{v}$, while causality under the $e^{i\omega t}$ convention implies $\mathbf{B}(-\omega)=\mathbf{B}(\omega)^{*}$ (Kramers–Kronig consistency).

	Mass conservation couples the pore pressure, solid displacement, and relative displacement according to
	\begin{equation}
		i\omega \left[ \frac{p}{M}
		+ \boldsymbol{\alpha}:\boldsymbol{\varepsilon}(\mathbf{u})
		+ \nabla\cdot\mathbf{w}\right] - q = 0,
		\label{eq:mass_balance_mod}
	\end{equation}
	where $M$ is the Biot modulus, a scalar parameter that characterizes the compressibility of the fluid–solid system at constant strain~\cite{Carcione2022}, and $\boldsymbol{\alpha}$ is the effective stress tensor. Here, $q$ denotes the rate of fluid-content change per unit bulk volume (positive for injection), and we will also use the notation $\beta = 1/M$.

	For a general anisotropic porous medium composed of isotropic minerals, the Biot modulus $M$ can be expressed as
	\begin{equation}
		M =
		\frac{K_s}{
			(1 - K^{*}/K_s)
			-
			\phi \left(1 - K_s/K_f\right)
		},
	\end{equation}
	where $K_f$ and $K_s$ denote the bulk moduli of the fluid and solid phases, respectively. The quantity $K^{*}$ is the rotational invariant corresponding to the Voigt-average bulk modulus of the drained stiffness tensor,
	$K^* = \frac{1}{9}\, C^{(m)}_{iijj}$.
	This expression is equivalent to Eq.~(7.172) in~\cite{Carcione2022}.
	
	The Biot effective stress tensor is defined by (see Eq.~7.165 in~\cite{Carcione2022})
	\begin{equation}
		\alpha_{ij}
		=
		\delta_{ij}
		-
		\frac{1}{K_s} C^{(m)}_{ijkl},
		\label{eq_alpha}
	\end{equation}
	where $C^{(m)}_{ijkl}$ denotes the drained elastic stiffness tensor of the porous skeleton. It follows directly that $\boldsymbol{\alpha}$ is symmetric, i.e. $\alpha_{ij} = \alpha_{ji}$, since $C^{(m)}_{ijkl}$ satisfies the minor symmetry $C^{(m)}_{ijkl} = C^{(m)}_{jikl}$.
	
	\color{black}
	
	The constitutive relation for the total stress is given by
	\begin{equation*}
		\boldsymbol{\sigma} = \mathbf{C}(\omega):\boldsymbol{\varepsilon} - \boldsymbol{\alpha}\,p,
		\qquad
		\boldsymbol{\varepsilon}(\mathbf{u}) = \tfrac{1}{2}\big(\nabla\mathbf{u} + \nabla\mathbf{u}^\top\big),
	\end{equation*}
	where $\mathbf{C}(\omega)$ denotes the drained stiffness tensor. We adopt the notation of Auld~\cite{Auld1973}, where the symbol $:$ denotes the double contraction of tensors, i.e.
	\[
	(\mathbf{C} : \boldsymbol{\varepsilon})_{ij} = C_{ijkl}\,\varepsilon_{kl}.
	\]
	The negative sign in the coupling term reflects Terzaghi’s principle: an increase in pore pressure reduces the effective stress carried by the solid matrix. The stiffness tensor $\mathbf{C}(\omega)$ satisfies the usual symmetry and positivity (or positive-real) conditions. Effects of squirt flow are not included in the dynamic Biot model but may be represented through a frequency-dependent viscoelastic stiffness tensor~\cite{pride2004, muller2010seismic, jakobsen2009unified}.

	According to Neumann's symmetry principle~\cite{nye1985physical}, effective tensors associated with a given microstructure must share the same symmetry class. In a consistent poroelastic model, this implies that tensors such as the elastic stiffness, mobility, and coupling coefficients inherit the same symmetry properties, although in practice different levels of approximation may be used depending on the modeling assumptions \cite{jakobsen2009velocity}.

	It is worth noting that the frequency-domain formulation naturally incorporates memory effects through the frequency dependence of the constitutive tensors, such as \( \mathbf{B}(\omega) \) and \( \mathbf{C}(\omega) \). Modeling these effects in the time domain would require convolution integrals with long memory kernels, making the formulation significantly more complex and computationally demanding.

	\subsection{A Displacement--Pressure Two-Field Representation}
	
	An alternative to the three-field formulation is a two-field representation in terms of the solid displacement \( \mathbf{u} \) and the pore pressure \( p \). This system is obtained by eliminating the relative displacement \( \mathbf{w} \) from the governing equations using the dynamic Darcy law (\ref{eq:fluid_momentum_mod}):
	\begin{equation}
		\mathbf{w} = \frac{i}{\omega}\,\mathbf{B}(\omega)\cdot\big[\nabla p - \omega^2\rho_f\,\mathbf{u}\big].
		\label{eq:w_expression}
	\end{equation}
	Substituting \eqref{eq:w_expression} into the solid momentum 
	equation (\ref{eq:solid_momentum_mod}) gives 
	\begin{equation}
		\omega^2\,\boldsymbol{\rho}_{\mathrm{eff}}\cdot\mathbf{u} + \nabla\cdot\big[\mathbf{C}(\omega):\boldsymbol{\varepsilon}\big]
		+ \mathbf{s}_u + \mathbf{f} = {\bf 0},
		\label{displacement_eq}
	\end{equation}
	where 
	\begin{equation}
		\boldsymbol{\rho}_{\mathrm{eff}} = \rho\,\mathbf{I} - i\omega\,\rho_f^2\,\mathbf{B}(\omega),
		\label{def_rho_eff}
	\end{equation}
	is the effective mass density tensor, and
	\begin{equation*}
		\mathbf{s}_u = -\,\nabla\cdot(\boldsymbol{\alpha}\,p) + i\omega\,\rho_f\,\mathbf{B}(\omega)\cdot\nabla p.
	\end{equation*}
	is a pressure related coupling-source term in the displacement equation. 
	In equation (\ref{def_rho_eff}), $\mathbf{I}$ denotes the second-rank unit tensor.

	The pressure equation is obtained by computing \( \nabla\cdot\mathbf{w} \) from \eqref{eq:w_expression} and substituting it into the mass balance equation (\ref{eq:mass_balance_mod}):
	\begin{equation}
		i\omega \frac{p}{M} - \nabla\cdot\big[\mathbf{B}(\omega)\cdot\nabla p\big] + s_p - q = 0,
		\label{pressure_eq}
	\end{equation}
	where
	\[
	s_p = \,i\omega\,\boldsymbol{\alpha}:\epsilon(\mathbf{u}) +\omega^2\rho_f\,\nabla\cdot\big(\mathbf{B}(\omega)\cdot\mathbf{u}\big),
	\]
	is a displacement related coupling-source term in the pressure equation.
	
	This formulation is mathematically equivalent to Biot’s original four-field formulation and is related to what is often called the Biot--Allard formulation in the acoustics literature~\cite{Allard1993, Mikelic2012}. However, the classical Biot--Allard equations are formulated in the time domain and incorporate memory effects only in the fluid-flow term. To the best of our knowledge, the present frequency-domain version has not been widely documented, apart from the work of Huang et al.\ in \emph{Modeling of seismic wave scattering in poroelastic media} \cite{Huang2024}, where a similar \( (\mathbf{u},p) \)-formulation was introduced in the context of Green’s functions and integral representations. While this approach is less common in geophysical modeling, it provides a useful reference point and is employed in some frequency-domain finite element methods. However, the coupling operators lack an adjoint relationship because they involve different powers of \( \omega \), which complicates energy estimates and variational formulations. To address this, we now derive an equivalent velocity--pressure-rate representation.

	\subsection{A Velocity-Pressure Rate Two-Field Representation}
	
	To obtain a two-field system with more symmetric coupling operators, we introduce the solid velocity and the pressure rate:
	\[
	\mathbf{v} = i\omega\,\mathbf{u}, 
	\qquad 
	\dot{p} = i\omega\,p,
	\]
	Starting from the displacement--pressure formulation, we multiply the displacement equation (\ref{displacement_eq}) by \( i\omega \) and use the above relations so that the form of the equation is preserved but with ${\bf u}$ is replaced by ${\bf v}$, $p$ is replaced by $\dot{p}$ and ${\bf f}$ is replaced by $\dot{\mathbf{f}} = i\omega \mathbf{f}$:
	\begin{equation}
		\omega^2\,\boldsymbol{\rho}_{\mathrm{eff}}\cdot\mathbf{v} + \nabla\cdot\big[\mathbf{C}(\omega):\boldsymbol{\varepsilon}(\mathbf{v})\big] +  \mathbf{s}_v + \dot{\mathbf{f}} = {\bf 0},
		\label{eq:vpr_velocity}
	\end{equation}
	where
	\begin{equation*}
		\mathbf{s}_v = -\,\nabla\cdot(\boldsymbol{\alpha}\,\dot{p}) + i\omega\,\rho_f\,\mathbf{B}(\omega)\cdot\nabla\dot{p}.
	\end{equation*}

	Applying the same change of variables to the pressure equation (\ref{pressure_eq}), but without multiplying by $i\omega $, we get the following pressure rate equation: 
	\begin{equation}
		\frac{\dot{p}}{M} - \frac{1}{i\omega}\,\nabla\cdot\big[\mathbf{B}(\omega)\cdot\nabla\dot{p}\big] + s_{\dot{p}} - q = 0,
		\label{eq:vpr_pressure}
	\end{equation}
	where
	\begin{equation*}
		s_{\dot{p}} = \boldsymbol{\alpha}:\epsilon(\mathbf{v}) - i\omega\,\rho_f\,\nabla\cdot\big(\mathbf{B}(\omega)\cdot\mathbf{v}\big).
	\end{equation*}
	
	After this change of variables, the coupling terms 
	$\mathbf{s}_v$ and $s_{\dot{p}}$ constitute the off--diagonal 
	blocks $A_{12}$ and $A_{21}$ of the operator matrix introduced 
	in the next section. 
	A natural question is: why does the quasi-static limit of the Biot--Allard $(\mathbf{u},p)$ 
	equations have a skew-adjoint coupling structure in the time domain, but apparently not in the frequency domain? 
	By the \emph{time--domain Biot--Allard equations} we mean the classical 
	two--field formulation in $\mathbf{u}(t)$ and $p(t)$, with memory 
	effects restricted to the Darcy flow term. 
	The natural Hilbert space inner product is defined by the sum of the 
	kinetic and storage energies, which involve the solid velocity 
	$\partial_t \mathbf{u}$ and the pressure rate $\partial_t p$. 
	With respect to this energy inner product, the coupling operators are 
	adjoint. 
	After transforming to the frequency domain but keeping 
	$(\mathbf{u},p)$ as the primary variables, these time derivatives 
	become $i\omega\,\mathbf{u}$ and $i\omega\,p$, so the corresponding 
	energy inner product acquires explicit frequency factors that obscure 
	the skew-adjoint coupling structure; which is known to exist in the the quasi-static limit.
	By instead taking $(\mathbf{v},\dot{p})$ as the unknowns, these 
	$\omega$--dependent scalings are absorbed into the variables, and the 
	frequency--domain formulation preserves the $L^{2}$--skew-adjoint coupling 
	inherited from the time--domain energy inner product in the quasi-static limit. 
	This restored symmetry will be fundamental for the Hilbert space 
	framework and stability analysis developed below.

	\section{Operator Framework and Finite Element Discretization}
	\label{sec:operator-weak}
	
	Biot’s equations in the frequency domain form a coupled PDE system that is challenging to analyze and discretize directly. We adopt the \emph{velocity–pressure rate formulation}, which simplifies inertial terms and reveals a block structure suitable for energy analysis, well-posedness, and stable discretization. 
	Casting the system in an \emph{operator framework} provides two key benefits: 
	(i) a rigorous Hilbert space setting for stability analysis, and 
	(ii) a natural basis for variational formulations and iterative methods such as the $L$-stabilized splitting scheme.
	
	\subsection{Notation for Operator-Theoretic Analysis}
	
	Throughout this paper, we let $\Omega \subset \mathbb{R}^d$ be a bounded Lipschitz domain.  We will use standard notation from functional analysis, so we let $L^{2}(\Omega;\mathbb{C})$ be the space of all complex square-integrable functions.  Further, we let $H^{1}(\Omega;\mathbb{C})$ be the complex Sobolev space with first-order weak derivatives in $L^{2}(\Omega;\mathbb{C})$. By $H^{1}_{0}(\Omega;\mathbb{C})$ we denote the subspace of $H^{1}(\Omega;\mathbb{C})$ with vanishing trace at the boundary. Let $\mathscr{D}'(\Omega;\mathbb{C})$ be the space of distributions. To shorten notation, we omit the domain $\Omega$ and since all spaces are complex we skip $\mathbb{C}$ from the notation, i.e. $L^{2}(\Omega;\mathbb{C})=L^{2}$. For vector-valued functions, the spaces are written in bold, i.e. $(L^{2})^{d}=\bm{L^{2}}$.
	The $L^{2}$ and $\bm{L}^{2}$ inner products are complex conjugate in the second argument, i.e.
	\[
	\langle \mathbf{v}, \mathbf{w} \rangle_{\bm{L^{2}}} = \int_\Omega \mathbf{v}\cdot\overline{\mathbf{w}}\,\mathrm{d}\mathbf{x}, \qquad
	\langle \dot{p}, r \rangle_{L^{2}} = \int_\Omega \dot{p}\,\overline{r}\,\mathrm{d}\mathbf{x}.
	\]
	The associated norms of $L^{2}$ or $\bm{L}^{2}$ will be represented by $\|\cdot\|:=\|\cdot\|_{L^{2}}$, norms associated with other spaces will be denoted in the subscript. By $V$ we denote the product space $V = \bm{L}^{2} \times L^{2}$.
	
	\subsection{Block Operator Formulation}
	
	The governing equations can be written as a block operator system:
	\[
	\mathcal{A}(\omega)
	\begin{pmatrix}
		\mathbf{v} \\
		\dot{p}
	\end{pmatrix}
	=
	\begin{pmatrix}
		\mathcal{A}_{11} & \mathcal{A}_{12} \\
		\mathcal{A}_{21} & \mathcal{A}_{22}
	\end{pmatrix}
	\begin{pmatrix}
		\mathbf{v} \\
		\dot{p}
	\end{pmatrix}
	=
	\begin{pmatrix}
		\dot{\mathbf{f}} \\
		q
	\end{pmatrix}.
	\]
	The diagonal blocks represent elastic and storage/diffusion contributions, while the off-diagonal blocks capture inertial and coupling effects. Their explicit forms follow the velocity–pressure rate formulation \eqref{eq:vpr_velocity}-\eqref{eq:vpr_pressure} of dynamic Biot theory:
	\begin{subequations}\label{eq:block-operators-vpr}
		\begin{align}
			\mathcal{A}_{11}\mathbf{v} &=
			-\,\omega^2\, \boldsymbol{\rho}_{\mathrm{eff}}\cdot \mathbf{v}
			- \nabla \cdot\big( \mathbf{C}(\omega) : \boldsymbol{\varepsilon}(\mathbf{v}) \big),\\
			\mathcal{A}_{12}\dot{p} &=
			\nabla \cdot\big( \boldsymbol{\alpha}\, \dot{p} \big)
			- i\omega\, \rho_f\, \mathbf{B}(\omega)\cdot \nabla \dot{p},\\
			\mathcal{A}_{21}\mathbf{v} &=
			\boldsymbol{\alpha}:\epsilon(\mathbf{v})
			- i\omega\, \rho_f\, \nabla \cdot\big( \mathbf{B}(\omega)\cdot \mathbf{v} \big),\\
			\mathcal{A}_{22}\dot{p} &=
			\frac{\dot{p}}{M}
			- \frac{1}{i\omega}\, \nabla \cdot\big( \mathbf{B}(\omega)\cdot \nabla \dot{p} \big).
		\end{align}
	\end{subequations}
	The diagonal blocks $\mathcal{A}_{11}$ and $\mathcal{A}_{22}$ represent, respectively, the elastic–inertial response of the solid skeleton and the storage–diffusion behavior of the fluid phase. The off-diagonal blocks $\mathcal{A}_{12}$ and $\mathcal{A}_{21}$ capture coupling between these phases through momentum exchange and pressure-induced deformation, including energy dissipation via mobility terms.
	
	The block operator acts on the $L^2$-based Hilbert product $V$ with domain
	\[
	D(\mathcal{A}) \subset \bm{H}_0^{1} \times H^1,
	\]
	ensuring the weak \color{black} derivatives are well-defined while adjoint structure and energy estimates use $L^2$ pairings.
	
	\medskip
	\begin{itemize}
		\item \textbf{Strong ellipticity:} Under standard assumptions on $\mathbf{C}(\omega)$ and $\mathbf{B}(\omega)$, these operators are strongly elliptic in the sense of second-order PDEs (see \cite{Bonnet2016}), which underpins well-posedness.
		\item \textbf{Coercivity:} Coercivity is established using $L^2$ inner products and Poincar\'e’s inequality to control gradient terms. $\mathcal{A}_{22}$ is uniformly coercive due to the storage term, whereas $\mathcal{A}_{11}$ is coercive only for angular frequencies below a critical threshold depending on material parameters and dynamic mobility.
		\item \textbf{Boundedness:} The coupling operators are continuous as maps
		\[
		\mathcal{A}_{12}: H^1\to \bm{L}^{2},\qquad
		\mathcal{A}_{21}: \bm{H}^{1}\to L^2,
		\]
		with norms growing at most linearly in $|\omega|$. Boundedness follows from multiplication estimates using $L^\infty$ bounds of coefficients and the weak product rule.
		\item \textbf{Skew-adjoint structure (quasi-static limit):} Under the chosen $L^2$ inner product, $\mathcal{A}_{12}$ and $\mathcal{A}_{21}$ satisfy a skew-adjoint relationship in the quasi-static limit. Beyond this limit, energy dissipation caused by fluid-solid friction breaks this symmetry.
	\end{itemize}
	Formal statements and proofs of these properties, essential for the splitting solution strategy, are provided in \Cref{sec: appendix diagonal} and \Cref{sec: appendix A}.

	\subsection{Weak Formulation}
	In this section, we present the weak formulation of the block operator system in the previous section \eqref{eq:block-operators-vpr}. We also consider the finite element method for discretizing the weak formulation in space. 
	
	We assume that the coefficients $\boldsymbol{\rho}_{\text{eff}}$, $\mathbf{C}$, $\boldsymbol{\alpha}$, $\rho_f$, $\mathbf{B}$, and $\boldsymbol{\beta}$ satisfy the regularity and boundedness conditions stated in Section~2.3.  Then, the weak formulation of the block operator system~\eqref{eq:block-operators-vpr}
	is: find $(\mathbf{v}, \dot{p}) \in \bm{H}_{0}^{1} \times H_{0}^{1}$ such that,
	\begin{equation}\label{eq: weak form}
		\begin{aligned}
			-\,\langle \omega^2 \boldsymbol{\rho}_{\text{eff}} \mathbf{v}, \mathbf{w} \rangle
			+ \langle \mathbf{C} : \boldsymbol{\varepsilon}(\mathbf{v}), \boldsymbol{\varepsilon}(\mathbf{w}) \rangle
			+ \langle \div(\boldsymbol{\alpha} \dot{p}), \mathbf{w} \rangle
			- i\omega \langle \rho_f \mathbf{B} \nabla \dot{p}, \mathbf{w} \rangle
			= \langle \dot{\mathbf{f}}, \mathbf{w} \rangle, \\
			\langle \boldsymbol{\alpha}: \boldsymbol{\varepsilon}(\mathbf{v}),  r \rangle
			- i\omega \langle \rho_f \mathbf{B} \mathbf{v}, \nabla r \rangle
			- \langle \operatorname{tr}[\boldsymbol{\beta}] \dot{p}, r \rangle
			- \tfrac{1}{i\omega} \langle \mathbf{B} \nabla \dot{p}, \nabla r \rangle
			= \langle \dot{q}, r \rangle,
		\end{aligned}
	\end{equation}
	for all $(\mathbf{w}, r) \in \left(\bm{H}_{0}^{1} \times H_{0}^{1}\right)$.
	

	
	For the spatial discretization, we use a conforming Galerkin method that preserves the operator block structure. Let $\mathcal{T}_h$ be a shape-regular mesh of $\Omega$ with mesh size $h:= \max_{K\in\mathcal{T}_h}({\rm diam}(K))$. 
	We consider conforming subspaces
	\[
	\bm{V}_h \subset \bm{H}_{0}^{1}, 
	\qquad 
	Q_h \subset H_{0}^{1},
	\]
	with bases $\{\boldsymbol{\varphi}_i^V\}_{i=1}^{n_{{{v}}}}$ and $\{\varphi_j^Q\}_{j=1}^{n_Q}$. 
	We use test functions $\boldsymbol{\Phi}_h^V \in \bm{V}_h$ and $\Phi_h^{{Q}} \in Q_h$.
	The finite element discretization of \eqref{eq: weak form} is then: 
	Find $\mathbf{v}_h \in \bm{V}_h$ such that for all $\boldsymbol{\Phi}_h^V \in \bm{V}_h$,
	\begin{align*}
		-\langle \omega^2\, \boldsymbol{\rho}_{\mathrm{eff}}\, \mathbf{v}_h, \boldsymbol{\Phi}_h^V \rangle
		+ \big\langle \mathbf{C} : \boldsymbol{\varepsilon}(\mathbf{v}_h), \boldsymbol{\varepsilon}(\boldsymbol{\Phi}_h^V) \big\rangle
		+ \langle \div(\boldsymbol{\alpha}\, \dot{p}_h), \boldsymbol{\Phi}_h^V \rangle
		- i\omega\, \langle \rho_f\, \mathbf{B}\, \nabla \dot{p}_h, \boldsymbol{\Phi}_h^V \rangle
		\\= \langle \dot{\mathbf{f}}, \boldsymbol{\Phi}_h^V \rangle,
	\end{align*}
	and find $\dot{p}_h \in Q_h$ such that for all $\Phi_h^Q \in Q_h$,
	\begin{align*}
		\langle \boldsymbol{\alpha}: \boldsymbol{\varepsilon}(\mathbf{v}_h),  \Phi_h^Q \rangle
		- i\omega\, \langle \rho_f\, \mathbf{B}\, \mathbf{v}_h, \nabla \Phi_h^Q \rangle
		- \langle \operatorname{tr}[\boldsymbol{\beta}]\, \dot{p}_h, \Phi_h^Q \rangle
		- \frac{1}{i\omega}\, \langle \mathbf{B}\, \nabla \dot{p}_h, \nabla \Phi_h^Q \rangle
		= \langle \dot{q}, \Phi_h^Q \rangle.
	\end{align*}

	The discrete problem above can be written as a block operator system by expanding the discrete trial fields. 
	Then, using standard elementwise assembly (tensor contractions and complex conjugation in inner products are understood), we define
	\begin{align*}
		[M_{\rho_{\mathrm{eff}}}]_{j i} &:= \langle \boldsymbol{\rho}_{\mathrm{eff}}\, \boldsymbol{\varphi}_i^V, \boldsymbol{\varphi}_j^V \rangle, 
		&
		[K_C]_{j i} &:= \langle \mathbf{C} : \boldsymbol{\varepsilon}(\boldsymbol{\varphi}_i^V), \boldsymbol{\varepsilon}(\boldsymbol{\varphi}_j^V) \rangle, \\[2pt]
		[M_\beta]_{j i} &:= \langle \operatorname{tr}[\boldsymbol{\beta}]\, \varphi_i^Q, \varphi_j^Q \rangle, 
		&
		[K_B]_{j i} &:= \langle \mathbf{B}\, \nabla \varphi_i^Q, \nabla \varphi_j^Q \rangle, \\[2pt]
		[E_\alpha]_{j i} &:= \langle \div(\boldsymbol{\alpha}\, \varphi_i^Q), \boldsymbol{\varphi}_j^V \rangle, 
		&
		[Q_\alpha]_{j i} &:= \langle \boldsymbol{\alpha}: \boldsymbol{\varepsilon}(\boldsymbol{\varphi}_i^V) ,  \varphi_j^Q \rangle, \\[2pt]
		[Q_B]_{j i} &:= \langle \rho_f\, \mathbf{B}\, \boldsymbol{\varphi}_i^V, \nabla \varphi_j^Q \rangle,
		&
		[G_B]_{j i} &:= \langle \rho_f\, \mathbf{B}\, \nabla \varphi_i^Q, \boldsymbol{\varphi}_j^V \rangle.
	\end{align*}
	The discrete right-hand sides use the same notation as in the continuous weak formulation:
	\[
	[\mathbf{F}]_j := \langle \dot{\mathbf{f}}, \boldsymbol{\varphi}_j^V \rangle,
	\qquad
	[\mathbf{S}]_j := \langle \dot{q}, \varphi_j^Q \rangle.
	\]
	Collecting coefficients into vectors $\mathbf{V} \in \mathbb{C}^{n_v}$ and $P \in \mathbb{C}^{n_Q}$, the global block system reads
	\[
	\mathbf{A}(\omega)
	\begin{pmatrix}
		\mathbf{V} \\
		P
	\end{pmatrix}
	=
	\begin{pmatrix}
		\mathbf{F} \\
		\mathbf{S}
	\end{pmatrix},
	\qquad
	\mathbf{A}(\omega) =
	\begin{pmatrix}
		\mathbf{A}_{11}(\omega) & \mathbf{A}_{12}(\omega) \\
		\mathbf{A}_{21}(\omega) & \mathbf{A}_{22}(\omega)
	\end{pmatrix},
	\]
	with block components
	\[
	\begin{aligned}
		\mathbf{A}_{11} &= -\,\omega^2\, M_{\rho_{\mathrm{eff}}} + K_C, \\
		\mathbf{A}_{12} &= \,E_\alpha \;-\; i\omega\, G_B,\\
		\mathbf{A}_{21} &= \phantom{-}\,Q_\alpha \;-\; i\omega\, Q_B, \\
		\mathbf{A}_{22} &= -\,M_\beta \;-\; \tfrac{1}{i\omega}\, K_B.
	\end{aligned}
	\]
	
	\noindent
	\begin{remark}
		(i) The ordering of terms in the discrete weak form mirrors the continuous weak formulation to make the correspondence transparent.  
		(ii) The pair $(E_\alpha,Q_\alpha)$ realizes the anisotropic $\boldsymbol{\alpha}$-coupling in the mechanical and fluid equations, respectively. Under homogeneous boundary conditions and Hermitian coefficients, these satisfy the expected skew-adjoint relationship at the discrete level (in the quasi-static limit), $E_\alpha = Q_\alpha^*$, and likewise $G_B = Q_B^*$.  
		(iii) If desired, one may equivalently assemble the fluid mobility term in divergence form using $+\,i\omega\,\langle \nabla\cdot(\rho_f \mathbf{B}\,\boldsymbol{\varphi}_i^V), \varphi_j^Q \rangle$; the block structure above is unchanged.
	\end{remark}

	\section{An Iterative Splitting Scheme for the Block Operator System}\label{sec: 4}
	
	Solving the block matrix system as a single monolithic problem can be computationally demanding and memory-intensive, particularly for large-scale heterogeneous models. To address this, we propose a splitting strategy that is \emph{discretization-agnostic}: it operates at the algebraic level and does not depend on the choice of spatial discretization. This flexibility allows the scheme to be applied to finite element, finite difference, finite volume or pseudo-spectral methods alike. Although our analysis focuses on the finite element method (FEM), the numerical examples include a case based on a pseudo-spectral method (PSM) for spatial discretization, illustrating the generality of the approach. The iterative scheme alternates between velocity and pressure updates and incorporates a stabilization term to ensure robust convergence.

	Given the block matrix system, we use a splitting scheme described in \Cref{alg: iterative}. 
	
	\begin{algorithm}[H]
		\caption{Iterative splitting scheme }\label{alg: iterative}
		\begin{algorithmic}
			\Require $\boldsymbol{V}^{0},\, P^{0}$ as initial guess. 
			\For{k=1,2,..}
			\State Step 1: Given $P^{k}$ find $\boldsymbol{V}^{k+1}$ such that
			\begin{align*}
				\mathcal{A}_{11} \mathbf{V}^{(k+1)} + 
				\mathcal{A}_{12} P^{(k)}  + 
				L\left( \mathbf{V}^{(k+1)} - \mathbf{V}^{(k)}\right)
				= \mathbf{F}  
			\end{align*}	
			\State Step 2: Given $\boldsymbol{V}^{k+1}$ find $P^{k+1}$ such that
			\begin{align*}
				\mathcal{A}_{22} P^{(k+1)} + 
				\mathcal{A}_{21} \mathbf{V}^{(k+1)} = \mathbf{S}
			\end{align*}
			\State Repeat Step 1 and Step 2 until the changes in $\boldsymbol{V}$ and $P$ are below a specified tolerance level.
			\EndFor
		\end{algorithmic}
	\end{algorithm}

	\begin{theorem}[Convergence of the L-stabilized splitting scheme in the general case with non-skew-adjoint coupling terms]\label{thm: main theorem}
		Assume the setting above, in particular the coercivity conditions for $\mathcal{A}_{11}$ and $\mathcal{A}_{22}$. By defining
		\begin{equation}\label{eq: Lmin}
			L_{\min} \;:=\; 2\frac{\|\mathcal{A}_{12}\|^2 \|\mathcal{A}_{21}\|^2}{\alpha_{11}\,\alpha_{22}^2}.
		\end{equation}
		If $L > L_{\min}$, then the stabilized splitting scheme converges. Moreover, the error sequence satisfies
		\[
		\|\mathbf{e}_1^{k+1}\| \le \rho \,\|\mathbf{e}_1^{k}\|,
		\qquad
		\rho := \sqrt{\frac{L}{L+\alpha_{11}}} < 1,
		\]
		so the method is linearly convergent with contraction factor $\rho$. 
	\end{theorem}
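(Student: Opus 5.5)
We work with the error equations. Let $\mathbf{e}_1^k := \mathbf{V}^{(k)}-\mathbf{V}$ and $e_2^k := P^{(k)}-P$, where $(\mathbf{V},P)$ is the exact solution of the block system. Subtracting the exact equations from Steps 1 and 2 of \Cref{alg: iterative} gives
\begin{align*}
\mathcal{A}_{11}\mathbf{e}_1^{k+1} + \mathcal{A}_{12} e_2^{k} + L(\mathbf{e}_1^{k+1}-\mathbf{e}_1^{k}) &= 0,\\
\mathcal{A}_{22} e_2^{k+1} + \mathcal{A}_{21}\mathbf{e}_1^{k+1} &= 0 .
\end{align*}
The coercivity assumptions are taken in the form $\Ree\langle \mathcal{A}_{11}\mathbf{x},\mathbf{x}\rangle \ge \alpha_{11}\|\mathbf{x}\|^2$ and $\Ree\langle\mathcal{A}_{22}y,y\rangle\ge \alpha_{22}\|y\|^2$ (Appendix C). Continuity of the coupling blocks, with norms $\|\mathcal{A}_{12}\|$ and $\|\mathcal{A}_{21}\|$, is taken from Appendix D; at the discrete level these are simply matrix norms.

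\textbf{Step 1 (pressure error from velocity error).} Test the second error equation with $e_2^{k+1}$ and take real parts. Coercivity and Cauchy--Schwarz give $\alpha_{22}\|e_2^{k+1}\|^2 \le \|\mathcal{A}_{21}\|\,\|\mathbf{e}_1^{k+1}\|\,\|e_2^{k+1}\|$, hence
\[
\|e_2^{k+1}\|\le \frac{\|\mathcal{A}_{21}\|}{\alpha_{22}}\|\mathbf{e}_1^{k+1}\|,
\]
and the same bound holds with $k$ in place of $k+1$. Because the coupling is not skew-adjoint, I will not try to cancel the off-diagonal terms. I will only use this norm bound.

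\textbf{Step 2 (velocity energy estimate).} Test the first error equation with $\mathbf{e}_1^{k+1}$, take real parts, and use the polarization identity from Appendix B:
\[
2\Ree\langle \mathbf{a}-\mathbf{b},\mathbf{a}\rangle = \|\mathbf{a}\|^2-\|\mathbf{b}\|^2+\|\mathbf{a}-\mathbf{b}\|^2 .
\]
This yields
\[
\tfrac{L}{2}\big(\|\mathbf{e}_1^{k+1}\|^2-\|\mathbf{e}_1^{k}\|^2+\|\mathbf{e}_1^{k+1}-\mathbf{e}_1^k\|^2\big) + \alpha_{11}\|\mathbf{e}_1^{k+1}\|^2 \le \|\mathcal{A}_{12}\|\,\|e_2^k\|\,\|\mathbf{e}_1^{k+1}\| .
\]
Insert Step 1 to bound $\|e_2^k\|$ by $\frac{\|\mathcal{A}_{21}\|}{\alpha_{22}}\|\mathbf{e}_1^{k}\|$. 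Then apply Young's inequality with weight $\alpha_{11}/2$ to the coupling term:
\[
\|\mathcal{A}_{12}\|\,\frac{\|\mathcal{A}_{21}\|}{\alpha_{22}}\,\|\mathbf{e}_1^{k}\|\,\|\mathbf{e}_1^{k+1}\| \le \frac{\alpha_{11}}{2}\|\mathbf{e}_1^{k+1}\|^2 + \frac{\|\mathcal{A}_{12}\|^2\|\mathcal{A}_{21}\|^2}{2\alpha_{11}\alpha_{22}^2}\|\mathbf{e}_1^k\|^2 .
\]
Finally drop the nonnegative increment term.

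\textbf{Step 3 (contraction).} After multiplying by 2 we get
\[
(L+\alpha_{11})\|\mathbf{e}_1^{k+1}\|^2 \le \Big(L + \tfrac{1}{2}\alpha_{11}L_{\min}\Big)\|\mathbf{e}_1^k\|^2 .
\]
This bound does not immediately have the stated form, and here the exact constants need care. My plan is to redistribute the Young weight, or to retain the increment term $\|\mathbf{e}_1^{k+1}-\mathbf{e}_1^k\|^2$ to absorb part of the coupling. The goal is to show the coupling contribution is dominated by $L\|\mathbf{e}_1^k\|^2$ when $L>L_{\min}$, so that
\[
(L+\alpha_{11})\|\mathbf{e}_1^{k+1}\|^2 \le L\|\mathbf{e}_1^k\|^2,
\]
which gives $\rho=\sqrt{L/(L+\alpha_{11})}<1$. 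Concretely, I would split
\[
\langle \mathcal{A}_{12}e_2^k,\mathbf{e}_1^{k+1}\rangle = \langle \mathcal{A}_{12}e_2^k,\mathbf{e}_1^{k+1}-\mathbf{e}_1^k\rangle + \langle \mathcal{A}_{12}e_2^k,\mathbf{e}_1^{k}\rangle .
\]
The first piece is absorbed by the $\tfrac{L}{2}\|\mathbf{e}_1^{k+1}-\mathbf{e}_1^k\|^2$ term via Young's inequality, which costs a factor of order $\|\mathcal{A}_{12}\|^2\|\mathcal{A}_{21}\|^2/(L\alpha_{22}^2)$. The second piece is controlled using $\alpha_{11}$; this is where the threshold $L>L_{\min}$, with its factor $2/\alpha_{11}$, should enter.

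\textbf{Expected main obstacle.} The difficulty is this bookkeeping in Step 3, namely getting the clean factor $L/(L+\alpha_{11})$ rather than a perturbed one. If it cannot be obtained exactly, the fallback is a rate $\rho^2=(L+c)/(L+\alpha_{11})$ with $c<\alpha_{11}$ guaranteed by $L>L_{\min}$. Once $\mathbf{e}_1^k\to0$, convergence of the pressure error follows from Step 1.
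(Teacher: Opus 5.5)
\textbf{Your Step 3 has a structural gap, not a bookkeeping one.} If you bound $\langle\mathcal{A}_{12}e_2^k,\mathbf{e}_1^{k+1}\rangle$ by norms alone, with $K:=\|\mathcal{A}_{12}\|\,\|\mathcal{A}_{21}\|/\alpha_{22}$, you get $(L+\alpha_{11})\|\mathbf{e}_1^{k+1}\|^2\le (L+K^2/\alpha_{11})\|\mathbf{e}_1^{k}\|^2$. (Note that $K^2/\alpha_{11}=L_{\min}/2$, not $\tfrac12\alpha_{11}L_{\min}$.) This contracts only if $K<\alpha_{11}$, a small-coupling condition that $L$ cannot influence.

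Your proposed split does not help. It leaves $\langle\mathcal{A}_{12}e_2^k,\mathbf{e}_1^{k}\rangle$, a full-size coupling term at the \emph{old} iterate of size up to $K\|\mathbf{e}_1^k\|^2$. But $\alpha_{11}$ only controls $\|\mathbf{e}_1^{k+1}\|^2$, so this term cannot be absorbed.

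No redistribution of Young weights can fix this, because coercivity plus continuity alone do not imply the theorem. Take scalars $\mathcal{A}_{11}=a$, $\mathcal{A}_{22}=d$, $\mathcal{A}_{12}=\mathcal{A}_{21}=b$ with $a,d>0$ and $b^2>ad$. The error recursion is then exactly $e_1^{k+1}=\frac{L+b^2/d}{L+a}\,e_1^{k}$, which diverges for every $L>0$, including $L>L_{\min}$. So your fallback rate $(L+c)/(L+\alpha_{11})$ with $c<\alpha_{11}$ cannot come from $L>L_{\min}$. Choosing not to exploit any cancellation in the off-diagonal terms is exactly what sinks the argument.

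The missing idea is the one the paper's proof rests on. Test the second error equation with $e_2^{k+1}$ as well, add it to the first, and insert $\pm\mathcal{A}_{12}e_2^{k+1}$. The coupling then splits into two pieces:
\begin{itemize}
\item a difference term $\langle\mathcal{A}_{12}(e_2^{k}-e_2^{k+1}),\mathbf{e}_1^{k+1}\rangle$;
\item an instantaneous term $\langle\mathcal{A}_{12}e_2^{k+1},\mathbf{e}_1^{k+1}\rangle+\langle\mathcal{A}_{21}\mathbf{e}_1^{k+1},e_2^{k+1}\rangle$.
\end{itemize}
From the explicit form of the operators, the paper argues that the instantaneous term has zero real part. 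It uses integration by parts for the $\boldsymbol{\alpha}$-part and the representation $\Theta=\int_\Omega\mathbf{H}:\mathbf{B}\,dx$ for the mobility part. This structural input is what the counterexample shows to be indispensable.

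The difference term is proportional to the \emph{increment}. By the difference estimate of Appendix B, $\|e_2^{k}-e_2^{k+1}\|\le(\|\mathcal{A}_{21}\|/\alpha_{22})\|\mathbf{e}_1^{k+1}-\mathbf{e}_1^{k}\|$. Young's inequality with weight $\alpha_{11}$ then bounds the difference term by $\tfrac{\alpha_{11}}{2}\|\mathbf{e}_1^{k+1}\|^2+\tfrac{K^2}{2\alpha_{11}}\|\mathbf{e}_1^{k+1}-\mathbf{e}_1^{k}\|^2$. Coercivity absorbs the first piece, and $\tfrac{L}{2}\|\mathbf{e}_1^{k+1}-\mathbf{e}_1^{k}\|^2$ absorbs the second. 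This is the only place where $L>L_{\min}$ enters, and it leaves exactly $(L+\alpha_{11})\|\mathbf{e}_1^{k+1}\|^2\le L\|\mathbf{e}_1^{k}\|^2$.

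One caution if you adopt this route. The cancellation is genuine for the $\boldsymbol{\alpha}$-terms and for the $i\mathbf{B}_{\mathrm{I}}$ part of the mobility. For real symmetric $\mathbf{B}_{\mathrm{R}}$, however, the mobility couplings are adjoint rather than skew-adjoint, as the paper's own adjointness lemma in Appendix D states. Hence $\int_\Omega\mathbf{H}:\mathbf{B}_{\mathrm{R}}\,dx$ is real and generally nonzero. In Special case 1 of the paper's Step 3, the integrand is actually $z+\bar z$, not $z-\bar z$. A rigorous version therefore needs one of the following:
\begin{itemize}
\item an explicit hypothesis such as $\operatorname{Re}\big(\langle\mathcal{A}_{12}r,\mathbf{w}\rangle+\langle\mathcal{A}_{21}\mathbf{w},r\rangle\big)\ge 0$ for all admissible $(\mathbf{w},r)$;
\item a restriction to the quasi-static limit;
\item a restriction to purely imaginary mobility.
\end{itemize}
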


	\begin{proof}[Proof] 
		The proof is organized into five conceptual steps for readability.

		\noindent\textbf{Step 1 (Error equations and energy identity).} \\
		We denote the error between the iteration and true solution by $\bm{e}_{1}^{k}=\bmv^{k}-\bmv$ and $e^{k}_2=\dot{p}^{k}-\dot{p}$. Then by subtracting the true solution from the iterative procedure at $k+1$, we obtain
		\begin{subequations}
			\begin{align}
				\mathcal{A}_{11}\bm{e}_{1}^{k+1}+\mathcal{A}_{12}e_2^k+L(\bm{e}_{1}^{k+1}-\bm{e}_{1}^{k})&=0,\label{eq: error equation one}\\
				\mathcal{A}_{22}e_2^{k+1}+\mathcal{A}_{21}\bm{e}_{1}^{k+1}&=0.\label{eq: error equation two}
			\end{align}
		\end{subequations}
		
		In the first error equation \eqref{eq: error equation one} we test with $\mathbf{e}_1^{k+1}$ and in the second error equation \eqref{eq: error equation two} with $e_2^{k+1}$. Then, by adding them, we get
		\[
		\ip{\mathcal{A}_{11}\mathbf{e}_1^{k+1}}{\mathbf{e}_1^{k+1}}
		+ \ip{\mathcal{A}_{12} e_2^{k}}{\mathbf{e}_1^{k+1}}
		+ \ip{L(\mathbf{e}_1^{k+1}-\mathbf{e}_1^{k})}{\mathbf{e}_1^{k+1}}
		+ \ip{\mathcal{A}_{22}e_2^{k+1}}{e_2^{k+1}}
		+ \ip{\mathcal{A}_{21}\mathbf{e}_1^{k+1}}{e_2^{k+1}}=0.
		\]
		Next, we take the real part of the above equation and use the coercivity assumption on $\mathcal{A}_{11}$ and $\mathcal{A}_{22}$. We then have that 
		\begin{equation}\label{eq:energyline}
			\alpha_{11}\|\mathbf{e}_1^{k+1}\|^2 + \alpha_{22}\|e_2^{k+1}\|^2
			+ \Ree\,\ip{L(\mathbf{e}_1^{k+1}-\mathbf{e}_1^{k})}{\mathbf{e}_1^{k+1}}
			+ \Ree\,M_k \;\le\; 0,
		\end{equation}
		where
		\begin{equation}\label{eq: Mk}
			M_k := \ip{\mathcal{A}_{12} e_2^{k}}{\mathbf{e}_1^{k+1}} + \ip{\mathcal{A}_{21}\mathbf{e}_1^{k+1}}{e_2^{k+1}}.
		\end{equation}
		The first two terms in \eqref{eq:energyline} can be interpreted as the energy, and the third is the stabilization increment. The last term $M_k$ is the coupling.
	
		\noindent\textbf{Step 2 (Exact decomposition of the non-skew adjoint coupling).} \\
		\color{black}
		We now analyze the coupling term \eqref{eq: Mk}
		which arises in the energy identity. This term couples the two blocks of the system and is in general not skew-adjoint. To estimate it effectively, we decompose it into two parts: a difference term and an instantaneous coupling term. By inserting and subtracting \( \mathcal{A}_{12} e_2^{k+1} \),
		and then taking real parts of \eqref{eq: Mk}, we obtain 
		\begin{equation}\label{eq:Msplit}
			\Ree\,M_k
			= \Ree\,\ip{\mathcal{A}_{12}(e_2^{k}-e_2^{k+1})}{\bm{e}_{1}^{k+1}}
			+ \Ree\,\big(\ip{\mathcal{A}_{12} e_2^{k+1}}{\bm{e}_{1}^{k+1}} + \ip{\mathcal{A}_{21} \bm{e}_{1}^{k+1}}{e_2^{k+1}}\big).
		\end{equation}
		The first term involves the difference \( e_2^k - e_2^{k+1} \), which will be controlled via stabilization and bounded using Young’s inequality. The second term can be controlled by considering the actual terms involved in $\mathcal{A}_{12}$ and $\mathcal{A}_{21}$.

		\noindent\textbf{Step 3 (Analysis of the instantaneous coupling term).} \\
		We consider the instantaneous coupling term in the energy identity for the dynamic Biot-type system with frequency-dependent fluid mobility tensor $\mathbf{B}(\omega)$. The term reads
		\[
		\langle A_{12} e^{k+1}_2, \mathbf{e}^{k+1}_1 \rangle 
		+ \langle A_{21} \mathbf{e}^{k+1}_1, e^{k+1}_2 \rangle.
		\]
		Explicit expressions for the coupling operators are given in the paper. Our goal is to show that this term does not contribute to the \emph{real part} of the energy.
		
		\medskip
		
		\noindent
		\textbf{Effective stress contribution.}
		By integration by parts and using the symmetry of the tensor $\boldsymbol{\alpha}$, we obtain
		\[
		\langle \nabla \cdot (\boldsymbol{\alpha} e^{k+1}_2), \mathbf{e}^{k+1}_1 \rangle
		+\langle \boldsymbol{\alpha} : \boldsymbol{\varepsilon}(\mathbf{e}^{k+1}_1), e^{k+1}_2 \rangle = 0.
		\]
		This identity reflects the skew-adjoint structure of the quasi-static $\boldsymbol{\alpha}$-coupling. Hence, these terms do not contribute to the real part of the energy.
		
		\medskip
		
		\noindent
		\textbf{Mobility contribution.}
		We now consider the terms involving the mobility tensor. Define
		\[
		\Theta  := -i\omega \rho_f \Big( \langle \mathbf{B} \cdot \nabla e_2^{k+1}, \mathbf{e}_1^{k+1} \rangle + \langle \nabla \cdot (\mathbf{B} \cdot \mathbf{e}_1^{k+1}), e_2^{k+1} \rangle \Big).
		\]
		Using integration by parts and the sesquilinear inner product, this becomes
		\begin{equation}
			\Theta  = -i\omega \rho_f \int_\Omega \Big[ (\mathbf{B} \cdot \nabla e_2^{k+1}) \cdot \overline{\mathbf{e}_1^{k+1}} - (\mathbf{B} \cdot \mathbf{e}_1^{k+1}) \cdot \overline{\nabla e_2^{k+1} }\Big] \, dx.
			\label{Expression_Theta}
		\end{equation}

		\medskip
		
		\noindent
		We now analyze $\mathrm{Re}(\Theta)$ by considering different cases.
		
		\medskip

		\noindent
		\textit{Special case 1: Real mobility tensor.}
		Assume $\mathbf{B} = \mathbf{B}_R$, where $\mathbf{B}_R$ is real and symmetric. Then, comparing with \eqref{Expression_Theta}, we let $\Theta_{R}$ denote the contribution to $\Theta$ in this case.
		We define
		\[
		z = i \omega \rho_f \, (\mathbf{B}_R \cdot \mathbf{e}_1^{k+1}) 
		\cdot \overline{\nabla e_2^{k+1}}.
		\]
		Then we obtain
		\[
		\Theta_R = \int_\Omega (z - \bar{z})\, d\mathbf{x}=2i \int_\Omega \operatorname{Im}(z)\, d\mathbf{x}.
		\]
		Hence, $\Theta_R$ is purely imaginary, and therefore
		\[
		\mathrm{Re}(\Theta_R) = 0.
		\]

		\noindent
		\textit{Special case 2: Purely imaginary mobility tensor.}
		To expose the structure of $\Theta$ in the purely imaginary mobility case, we introduce the Hermitian tensor
		\[
		\mathbf{H}
		:=
		-i\omega \rho_f \Big(
		(\nabla e^{k+1}_2)\otimes \overline{\mathbf{e}^{k+1}_1}
		- \mathbf{e}^{k+1}_1 \otimes \overline{\nabla e^{k+1}_2}
		\Big),
		\]
		so that
		\begin{equation}
			\Theta = \int_\Omega \mathbf{H} : \mathbf{B} \, dx,
			\label{Theta_representation}
		\end{equation}
		where $:$ denotes the double contraction.
		One readily verifies that $\mathbf{H}$ is Hermitian, i.e. $\mathbf{H}^\dagger = \mathbf{H}$.
		Assume $\mathbf{B} = i\mathbf{B}_{\mathrm{I}}$, where $\mathbf{B}_{\mathrm{I}}$ is real and symmetric. 
		Then, using the representation \eqref{Theta_representation}, we obtain
		\[
		\Theta_{\mathrm{I}} = \int_\Omega \mathbf{H} : (i\mathbf{B}_{\mathrm{I}}) \, dx=\int_\Omega (i\mathbf{H}) : \mathbf{B}_{\mathrm{I}} \, dx=\int_\Omega \mathbf{A} : \mathbf{B}_{\mathrm{I}} \, dx, \quad \mbox{where }\mathbf{A} := i \mathbf{H}.
		\]
		This identity shows that $\Theta_{\mathrm{I}}$ denotes the contribution to $\Theta$ corresponding to a purely imaginary mobility tensor.
		Since $\mathbf{H}$ is Hermitian, $\mathbf{A}$ is anti-Hermitian. The contraction of an anti-Hermitian tensor with a real symmetric tensor is purely imaginary. Hence,
		\[
		\mathrm{Re}(\Theta_{\mathrm{I}}) = 0.
		\]
		
		\medskip
		
		\noindent
		\textit{General case: complex mobility tensor}.
		Let
		\[
		\mathbf{B} = \mathbf{B}_{\mathrm{R}} + i \mathbf{B}_{\mathrm{I}}.
		\]
		Recall from special cases 1 and 2 that $\Theta_R$ and $\Theta_I$ denote the contributions to $\Theta$ associated with this decomposition, and not the real and imaginary parts of $\Theta$.
		Using the linearity of the mapping $\mathbf{B} \mapsto \Theta$ given in \eqref{Theta_representation}, we obtain
		\[
		\Theta = \int_\Omega \mathbf{H} : \mathbf{B}_{\mathrm{R}} \, dx
		+ \int_\Omega \mathbf{H} : (i \mathbf{B}_{\mathrm{I}}) \, dx
		= \Theta_{\mathrm{R}} + \Theta_{\mathrm{I}}.
		\]
		Since both $\Theta_{\mathrm{R}}$ and $\Theta_{\mathrm{I}}$ are purely imaginary, it follows that
		\[
		\mathrm{Re}(\Theta) = 0.
		\]
		This shows that the instantaneous coupling term does not contribute to the real part of the error energy.

		\noindent\textbf{Step 4 (Assemble the inequality and bound the difference term).}\\
		\color{black}
		By applying the binomial identity in Lemma \color{black} 2 and by combining \eqref{eq:energyline} and \eqref{eq:Msplit} we obtain
		\begin{align*}
			\alpha_{11}\|\bm{e}_{1}^{k+1}\|^2 + \alpha_{22}\|e_2^{k+1}\|^2
			&+ \tfrac{L}{2}\Big(\|\bm{e}_{1}^{k+1}\|^2 + \|\bm{e}_{1}^{k+1}-\bm{e}_{1}^{k}\|^2 - \|\bm{e}_{1}^{k}\|^2\Big) \\
			&\quad + \Ree\,\ip{\mathcal{A}_{12}(e_2^{k}-e_2^{k+1})}{\bm{e}_{1}^{k+1}}\leq 0.
		\end{align*}
		
		Using the Cauchy--Schwarz inequality and boundedness of $\mathcal{A}_{12}$,
		\[
		|\Ree\,\ip{\mathcal{A}_{12}(e_2^{k}-e_2^{k+1})}{\bm{e}_{1}^{k+1}}|
		\le \|\mathcal{A}_{12}(e_2^{k}-e_2^{k+1})\|\,\|\bm{e}_{1}^{k+1}\|
		\le \|\mathcal{A}_{12}\|\,\|e_2^{k}-e_2^{k+1}\|\,\|\bm{e}_{1}^{k+1}\|.
		\]
		Using the difference estimate in Lemma 3,
		\[
		\|e_2^{k}-e_2^{k+1}\|
		\le \frac{\|\mathcal{A}_{21}\|}{\alpha_{22}}\,\|\bm{e}_{1}^{k+1}-\bm{e}_{1}^{k}\|,
		\]
		we obtain
		\[
		|\Ree\,\ip{\mathcal{A}_{12}(e_2^{k}-e_2^{k+1})}{\bm{e}_{1}^{k+1}}|
		\le \frac{\|\mathcal{A}_{12}\|\,\|\mathcal{A}_{21}\|}{\alpha_{22}}\,
		\|\bm{e}_{1}^{k+1}-\bm{e}_{1}^{k}\|\,\|\bm{e}_{1}^{k+1}\|.
		\]
		Applying Young's inequality with the choice $\tau=\alpha_{11}$,
		gives the bound
		\begin{equation*}
			\Ree\,\ip{\mathcal{A}_{12}(e_2^{k}-e_2^{k+1})}{\bm{e}_{1}^{k+1}}
			\le \frac{\alpha_{11}}{2}\,\|\bm{e}_{1}^{k+1}\|^2
			+ \frac{\|\mathcal{A}_{12}\|^2 \|\mathcal{A}_{21}\|^2}{2\alpha_{11}\alpha_{22}^2}\,
			\|\bm{e}_{1}^{k+1}-\bm{e}_{1}^{k}\|^2.
		\end{equation*}

		\noindent\textbf{Step 5 (Inequality with condition on $L$).}\\ 
		
		Inserting the bounds from Step 4 and absorbing like terms to the left-hand side, we get
		after multiplying by $2$,
		\begin{equation}\label{eq:inequality}
			(L+\alpha_{11}) \|\bm{e}_{1}^{k+1}\|^2
			+ \underbrace{\big(L - L_{\min}\big)}_{=:c_1\geq 0}\,\|\bm{e}_{1}^{k+1}-\bm{e}_{1}^{k}\|^2
			+ 2\alpha_{22}\,\|e_2^{k+1}\|^2
			\;\le\; L \|\bm{e}_{1}^{k}\|^2.
		\end{equation}
		Here $L_{\min}$ is defined in \eqref{eq: Lmin}. Under the stated conditions, $c_1>0$.
		From the inequality in \eqref{eq:inequality}, ignoring the nonnegative terms 
		$c_1\|\bm{e}_{1}^{k+1}-\bm{e}_{1}^{k}\|^2$ and $2\alpha_{22}\|e_2^{k+1}\|^2$, we obtain
		\[
		\|\bm{e}_{1}^{k+1}\|^2 \le \frac{L}{L+\alpha_{11}} \|\bm{e}_{1}^{k}\|^2.
		\]
		Thus, the iteration is a contraction with factor $\rho$ defined in \Cref{thm: main theorem}
		which shows that the method converges linearly. 

	\end{proof}
	
	\begin{remark}[Physical Interpretation]
		The cancellation of the instantaneous coupling terms reflects that solid–fluid interaction does not exchange real energy instantaneously. Dissipative effects arise only through memory terms encoded in the Hermitian part $\operatorname{Im}(\mathbf{B})$, while the real part $\Ree(\mathbf{B})$ contributes only to reversible processes. Thus, the instantaneous coupling acts as a purely reactive term, preserving energy stability.
		
	\end{remark}
	
	
	\FloatBarrier

	\section{Numerical examples}\label{sec: numerics}
	In this section, we perform three numerical experiments to study the robustness and performance of the splitting scheme. First, we consider a homogeneous medium under different parameter regimes. We test the performance of the scheme by considering different stabilization parameters. In the second example, we consider a three-layered heterogeneous domain, illustrated in \Cref{fig: hetrougenous domain}, and study the number of iterations required for convergence. The first two examples are 2-dimensional and discretized using the finite element method, specifically, the Taylor-Hood elements. Here, all linear systems are solved using a direct solver.  The last example is a 3-dimensional problem in which we consider a heterogeneous, vertically transversely isotropic medium. For this example, we use the pseudospectral method for discretization, and the linear system arising in the splitting scheme is solved using GMRES. 
	\begin{figure}
		\centering
		\includegraphics[width=0.48\linewidth]
		{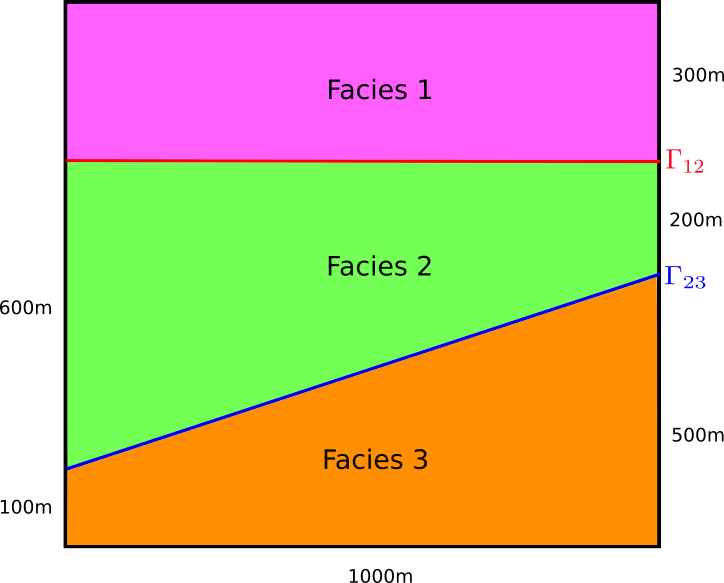}
		\caption{Test case 2 - Illustration of the heterogeneous domain.}
		\label{fig: hetrougenous domain}
	\end{figure}
	
	All examples will use a Ricker wavelet to mimic a seismic source, 
	\begin{equation}\label{eq: ricker}
		f(t)=(1 - 2 (\pi  f_0  (t - t_0))^2)  e^{-(\pi f_0  (t - t_0))^2},
	\end{equation}
	where $t_0=1/f_0$ with $f_0$ being the peak frequency. An example of the Ricker wavelet with peak frequency $f_0=20$Hz is displayed in \Cref{fig: ricker wavelet} along with a few selected frequencies where we look at the performance of the iterative scheme for the first two examples.  For the dynamic permeability, we will restrict ourselves to only consider the JKD dynamic permeability \cite{Johnson1987} 
	\begin{equation}\label{eq: JKD dynamic permeability}
		\boldsymbol{\kappa}(\omega) = \frac{k_0}{\sqrt{1-\frac{4i\alpha_{\infty}^{2}k_0^{2}\rho_f\omega}{\Lambda^{2}\eta \phi^{2}}}-\frac{i\alpha_{\infty}k_0\rho_f\omega}{\eta \phi}   },
	\end{equation}
	where $a_{\infty}$ is the infinite-frequency tortuosity and $\Lambda$ a tunable geometric parameter. To compute the tunable geometry constant $\Lambda$, we use, for simplicity, the empirical relation \cite[Eq. (7.278)]{Carcione2022}. 
	\begin{figure}\centering\includegraphics[width=0.7\textwidth]{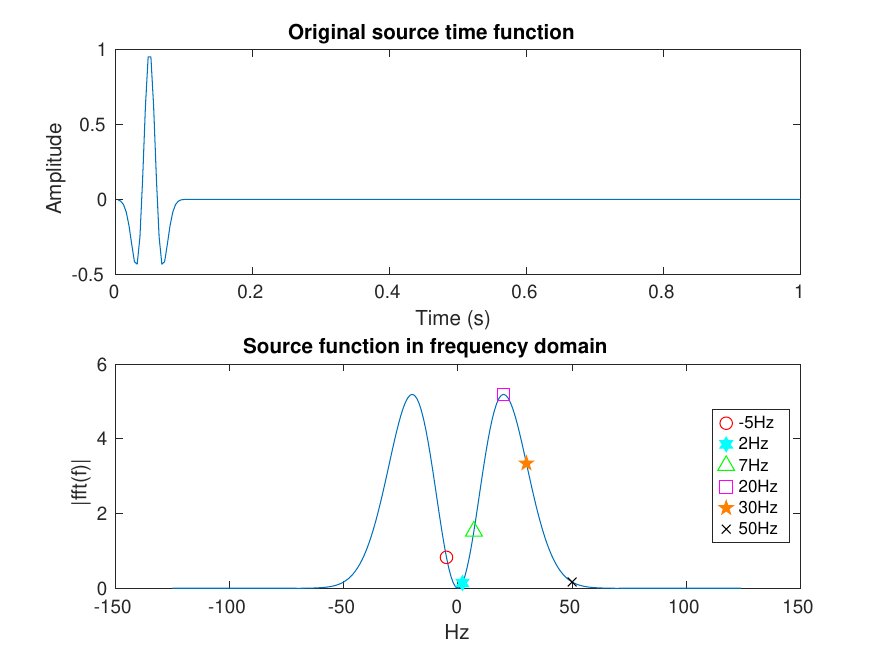}
		\caption{Ricker wavelet with a peak frequency of $20$Hz in the time and frequency domains, with marked frequencies, where we show the performance of the iterative scheme for Test case 1 and 2.}
		\label{fig: ricker wavelet}
	\end{figure}
	
	To measure the performance of the scheme, we look at the number of iterations required for different stabilization parameters across various medium parameters. For all examples, the iterative procedure is stopped when the relative error satisfies
	\begin{align}
		\|(\bmv^{k},p^{k})-(\bmv^{k-1},p^{k-1})\|\leq 10^{-6}\|(\bmv^{k},p^{k})\|.
	\end{align}
	The implementation is done in FreeFem++ \cite{freefem} for the 2-dimensional examples and in Matlab for the 3-dimensional example.

	\subsection{Test case 1 - Homogeneous isotropic 2D model}
	We consider a square homogeneous medium of size 1km by 1km on a structured triangular mesh with a mesh diameter of $5.657$m.  We will include a mechanical source term of the form 
	\begin{equation}\label{eq: source term examples}
		\textbf{f}=\textbf{b}f(t)\delta(\textbf{x}-\textbf{x}_s),
	\end{equation}
	where $\textbf{x}_s$ is the source location, and $\delta(\textbf{x}-\textbf{x}_s)$ is a regularized Dirac delta, i.e. a point Gaussian with standard deviation equal to 1.5$h_{loc}$ with $h_{loc}$ being the maximum mesh size of the elements touching $\bm{x}_s$, located at $\textbf{x}_s=(500,500)$, $f(t)$ is the time impulse function and $\textbf{b}=[0,1]^{T}$ is a vectorial function determining the strength of the source in different directions. 
		\begin{figure}
		\centering
		\begin{subfigure}[t]{0.49\textwidth}
			\centering
			\includegraphics[width=\linewidth]{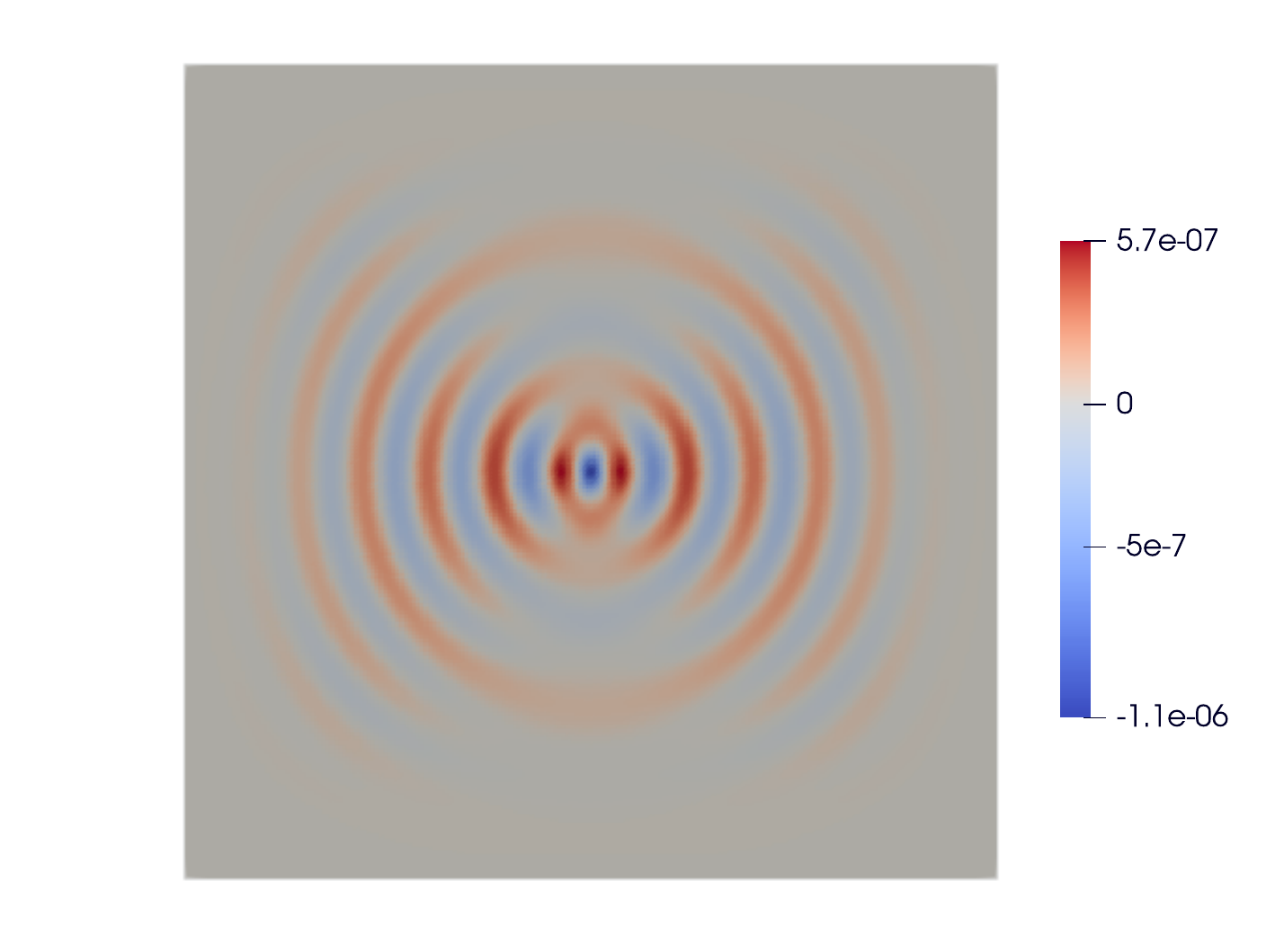}
			\caption{Real part}
		\end{subfigure}
		\hfill
		\begin{subfigure}[t]{0.49\textwidth}
			\centering
			\includegraphics[width=\linewidth]{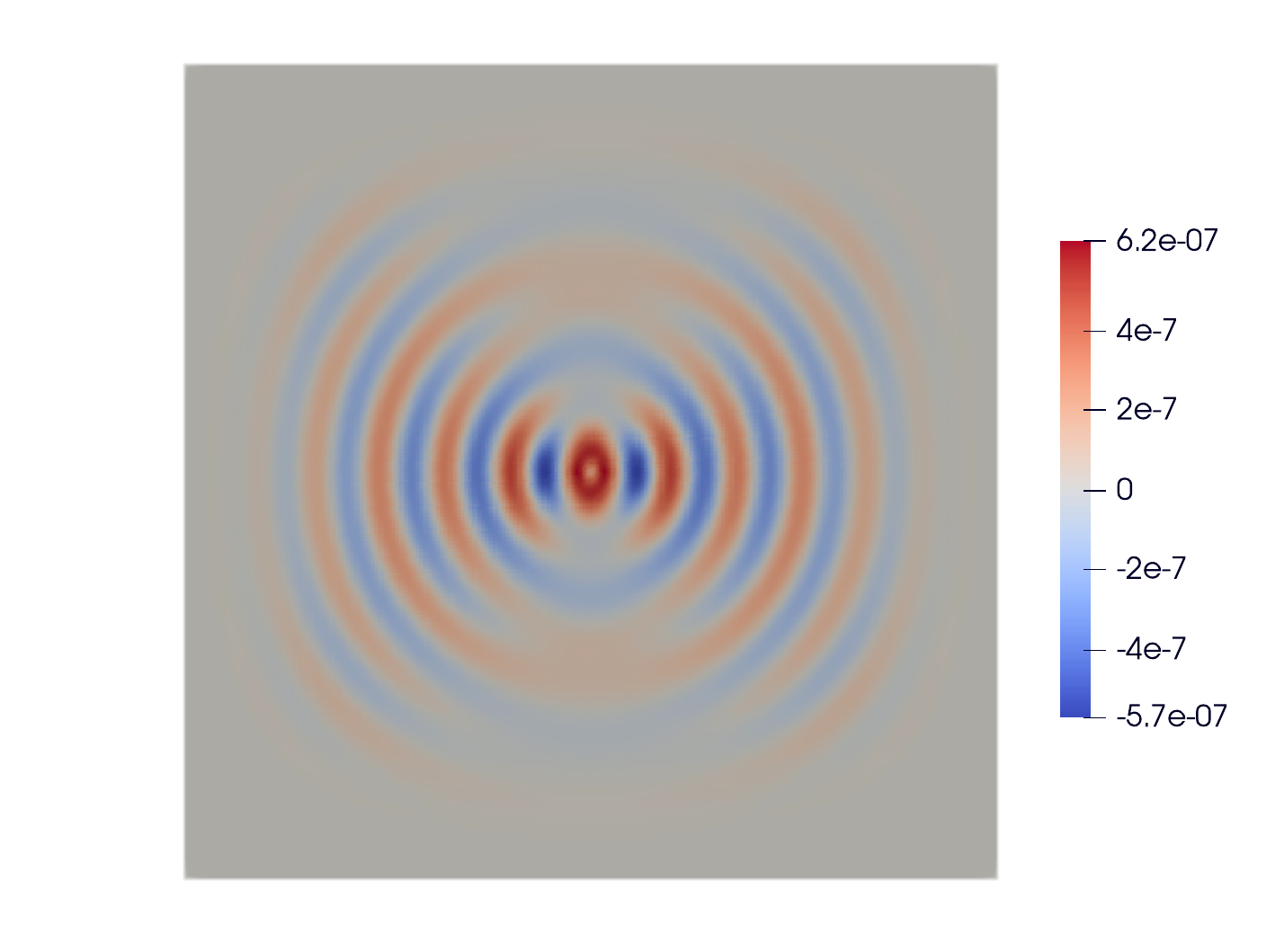}
			\caption{Imaginary part}
		\end{subfigure}
		
		\caption{Test case 1 - Numerical solution of $\textbf{v}_y$ at 30Hz for $\alpha=0.189$, $v_p=5800m/s$, and $v_s=2410m/s$ with a mesh diameter of $5.657$m.}
		\label{fig: example 1 velocity y comp solution}
	\end{figure}
		\begin{table}
		\renewcommand{\arraystretch}{1.1} 
		\centering
		\caption{Test case 1 - Poroelastic Constants for the Homogeneous Medium}
		\begin{tabular}{ll}
			\hline
			\textbf{Parameter} [unit] & \textbf{Value} \\ \hline
			P-wave velocity (\(v_p\)) [m/s] & \{ 5800, 4800 \} \\ 
			S-wave velocity (\(v_s\)) [m/s]& \{ 2410, 1410\}\\ 
			Density (\(\rho\)) [kg/m\(^3\)] & 2370  \\ 
			Fluid density (\(\rho_f\)) [kg/m\(^3\)]& 1000  \\ 
			Porosity (\(\phi\)) & 0.16 \\ 
			Permeability (\(k\)) [m\(^2\) ] & \(9.869233 \times 10^{-13}\) \\ 
			Viscosity (\(\eta\)) [Pa·s ] & \(1 \times 10^{-3}\) \\ 
			Bulk modulus of the solid matrix (\(K_s\)) [GPa]& 37  \\ 
			Bulk modulus of the fluid (\(K_f\)) [GPa] & 2.2  \\ 
			Shear modulus (\(\mu\)) [GPa]& \{13.8, 4.7  \}\\
			Effective stress coefficient (\(\alpha\)) & \{ 0.189, 0.5, 1 \}  \\ 
			Central frequency ($f_0$)& 20\\  
			Infinite-frequency tortuosity ($\alpha_{\infty}$) & 1.5 \\ \hline 
		\end{tabular}
		\label{tab:poroelastic_constants}
	\end{table}
	
	We apply homogeneous boundary conditions on the boundary of $\Omega$. However, in the region near the boundaries with a thickness $d_{AB}$, we use coordinate stretching to mimic absorbing boundary conditions. In essence, we transform the partial derivatives $\partial /\partial x\mapsto 1/s_x\partial /\partial x$ where $s_x$ is a stretching function depending on space and frequency. We employ a simple stretching function of the form $1+i\textbf{D}_x/\omega$ where the damping function $\textbf{D}_x=((d_{AB}-x)/d_{AB})^{2}$ grows quadratically with the distance into the region near the boundary. We refer the reader to an example of this methodology applied to Maxwell's equations in \cite{chew1997}; it is also commonly used in the context of dynamic poroelastic systems \cite{he2019}. The region near the boundary where we apply coordinate stretching has a width of $d_{AB}=200$m.
	We consider different parameters for the homogeneous medium, which are displayed in \Cref{tab:poroelastic_constants}. For $\omega=30$Hz, the numerical solution is displayed in \Cref{fig: example 1 velocity y comp solution}, \Cref{fig: example 1 velocity x comp solution}, and \Cref{fig: example 1 pressure solution} when $\alpha=0.189$, p-wave velocity is 5800m/s, and s-wave velocity is 2410m/s. Since the medium is assumed to be isotropic, the effective stress tensor is given by $\alpha _{ij} = \alpha \delta _{ij}$, with a slight abuse of notation.



	\begin{figure}
		\centering
		\begin{subfigure}[t]{0.49\textwidth}
			\centering
			\includegraphics[width=\linewidth]{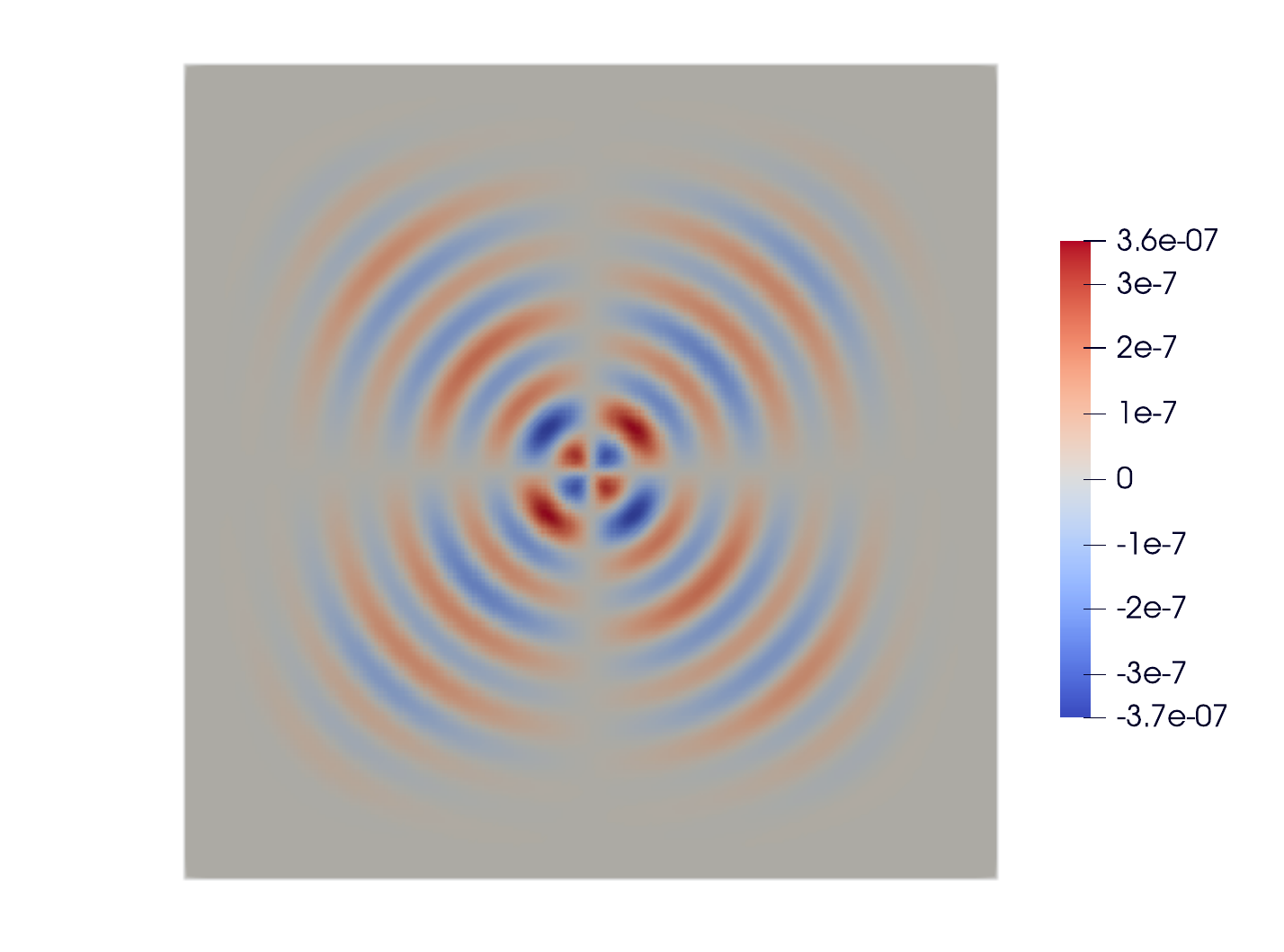}
			\caption{Real part}
		\end{subfigure}
		\hfill
		\begin{subfigure}[t]{0.49\textwidth}
			\centering
			\includegraphics[width=\linewidth]{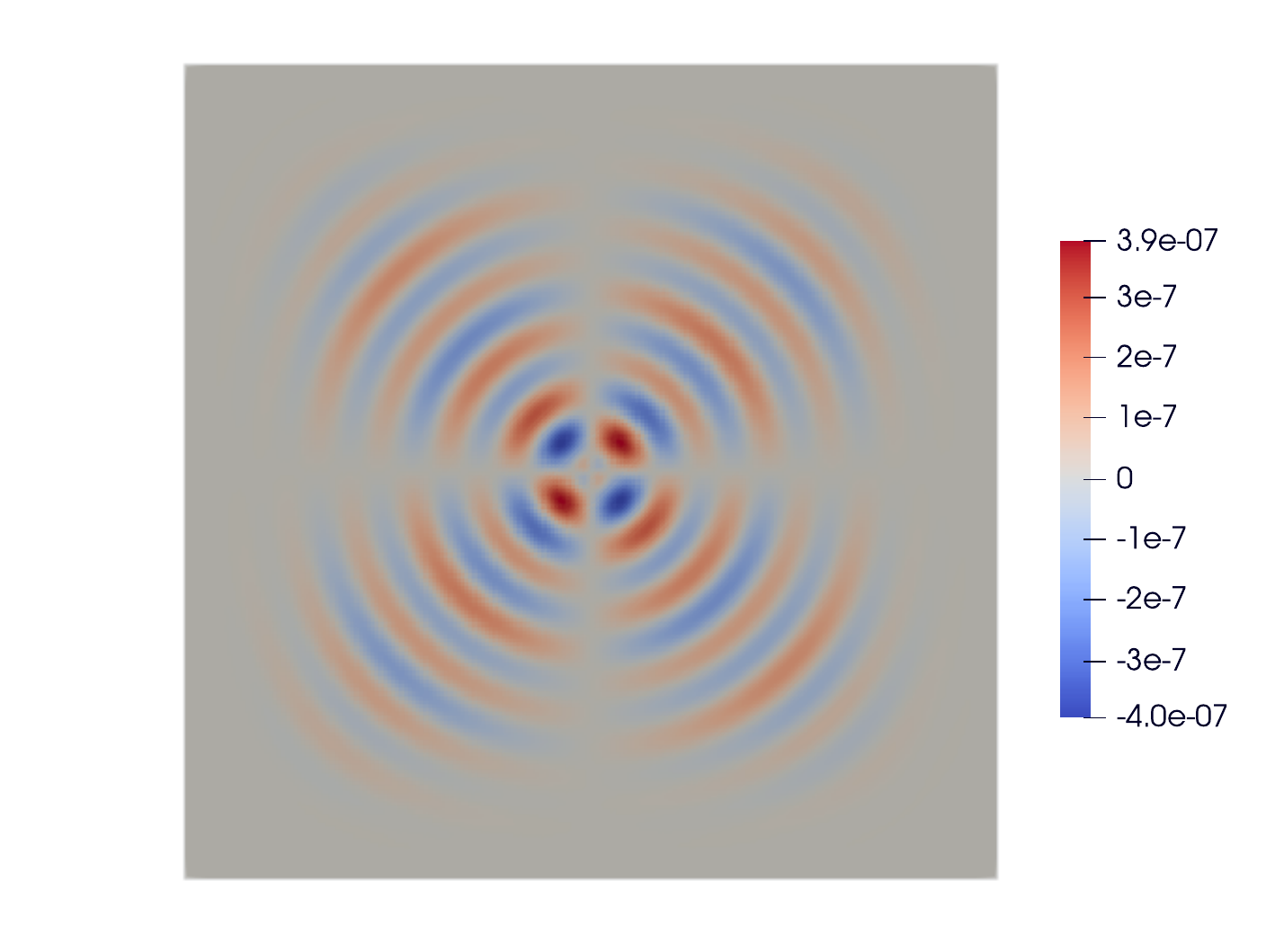}
			\caption{Imaginary part}
		\end{subfigure}
		
		\caption{Test case 1 - Numerical solution of $\textbf{v}_x$ at 30Hz for $\alpha=0.189$, $v_p=5800m/s$, and $v_s=2410m/s$ with a mesh diameter of $5.657$m.}
		\label{fig: example 1 velocity x comp solution}
	\end{figure}
	
	\begin{figure}
		\centering
		\begin{subfigure}[t]{0.49\textwidth}
			\centering
			\includegraphics[width=\linewidth]{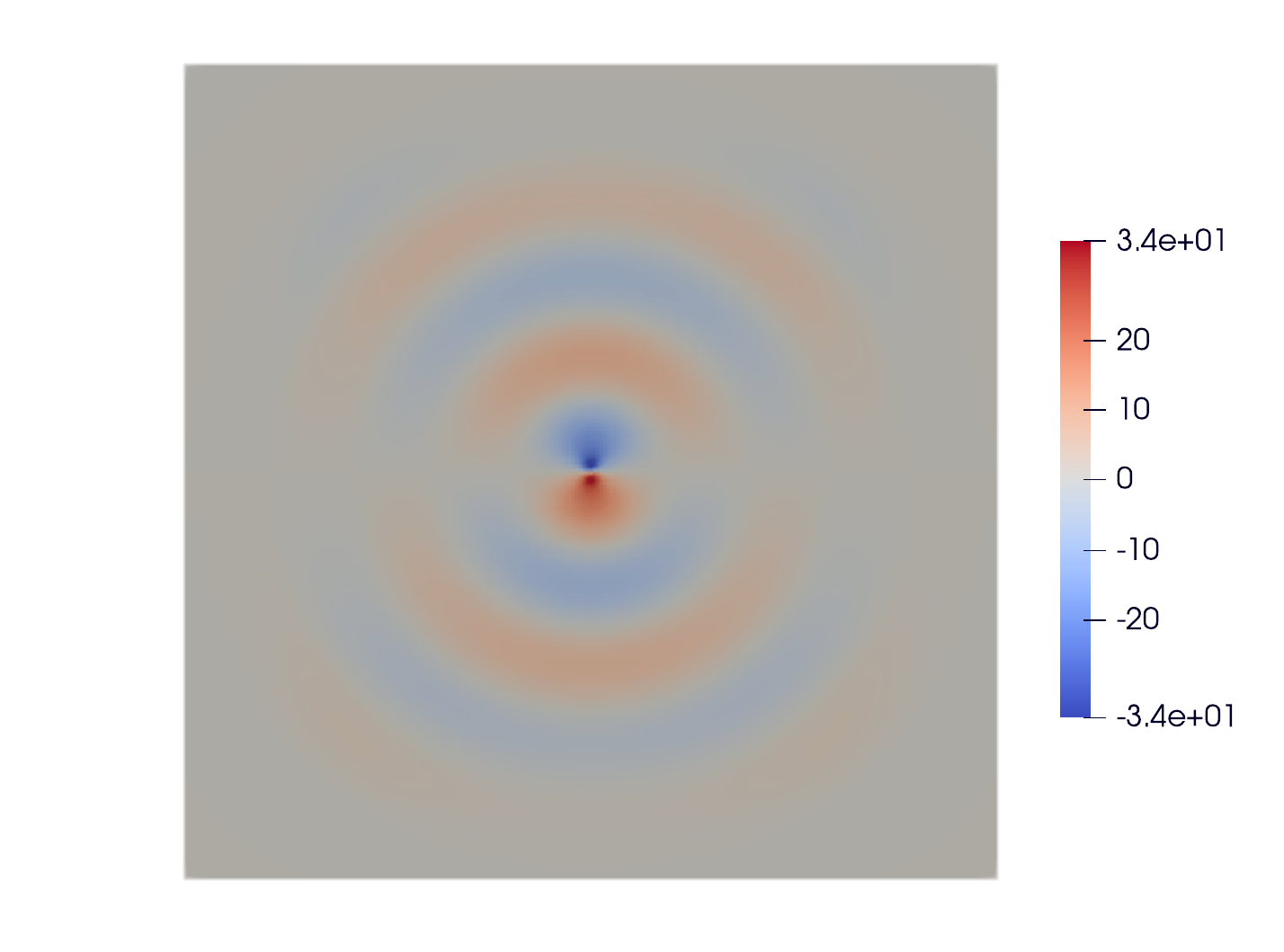}
			\caption{Real part}
		\end{subfigure}
		\hfill
		\begin{subfigure}[t]{0.49\textwidth}
			\centering
			\includegraphics[width=\linewidth]{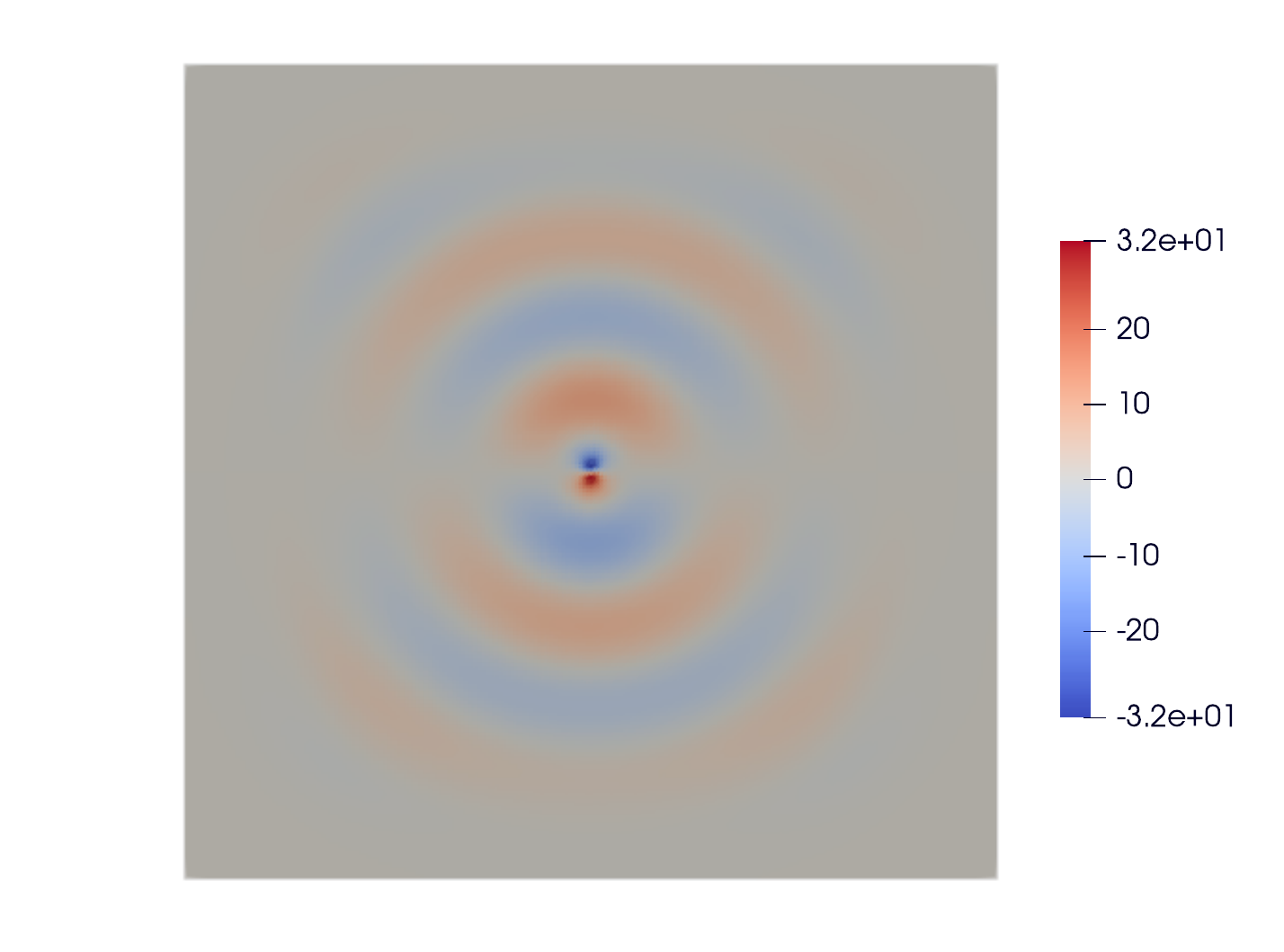}
			\caption{Imaginary part}
		\end{subfigure}
		
		\caption{Test case 1 - Numerical solution of $\dot{p}$ at 30Hz for $\alpha=0.189$, $v_p=5800m/s$, and $v_s=2410m/s$ with a mesh diameter of $5.657$m.}
		\label{fig: example 1 pressure solution}
	\end{figure}

	In \Cref{fig: example 1 small alpha} and \Cref{fig: example 1 varying alpha}, the number of iterations for different stabilization parameters under different material parameters is shown. In the case where $\alpha$ is small, the number of iterations is smaller than for larger values of $\alpha$. In addition, the number of iterations varies less for different stabilization parameters. For larger $\alpha$, the number of iterations appears to vary more for different stabilization parameters, with $\alpha^{2}/\beta$ yielding the fewest number of iterations. For worse choices of stabilization parameters, we also observe that the problem is frequency-dependent, with higher frequencies resulting in more iterations. For lower wave speeds, the number of iterations slightly increases.  This trend is further exemplified in \Cref{fig: example 1 wavespeeds} where we see divergence of the iterative scheme. This is expected since, when lowering the wave speed, we eventually violate the coercivity assumption of the elastic part of the system on which the convergence proof is based. Consequently, the iterative scheme is not expected to converge. In \Cref{sec: appendix A}, the coercivity of the diagonal operators is further discussed.
	\begin{figure}
		\centering
		\includegraphics[width=0.58\linewidth]{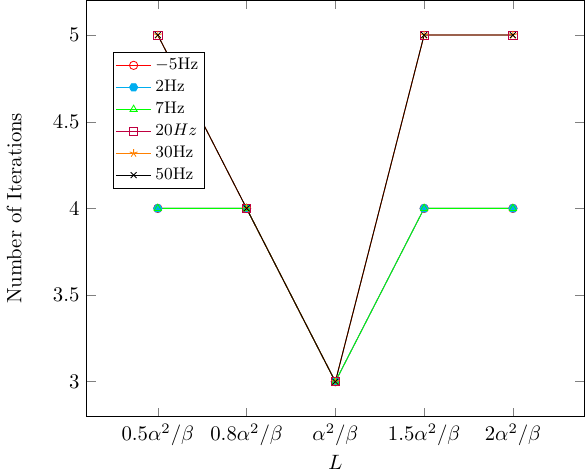}
		\caption{Test case 1 - Number of iterations for different frequencies with different stabilization parameters and $\alpha=0.189$, $v_p=5800m/s$, and $v_s=2410m/s$ with a mesh diameter of $5.657$m.}
		\label{fig: example 1 small alpha}
	\end{figure}

	\begin{figure}
		\centering
		\begin{subfigure}[t]{0.47\textwidth}
			\centering
			\includegraphics[width=0.95\linewidth]{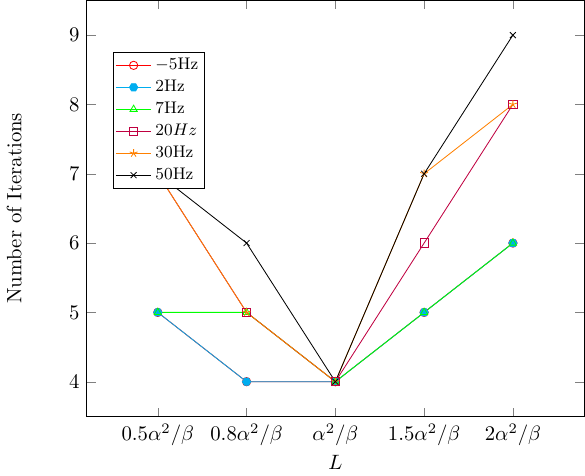}
			\caption{$\alpha=0.5$}
		\end{subfigure}
		\hfill
		\begin{subfigure}[t]{0.47\textwidth}
			\centering
			\includegraphics[width=0.95\linewidth]{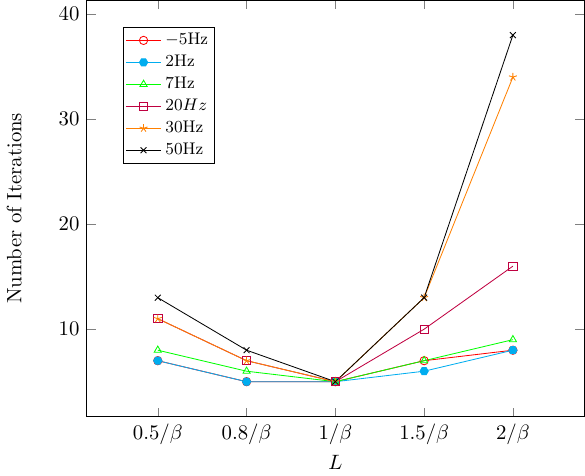}
			\caption{$\alpha=1$}
		\end{subfigure}
		
		\vspace{1em} 
		
		\begin{subfigure}[t]{0.47\textwidth}
			\centering
			\includegraphics[width=0.95\linewidth]{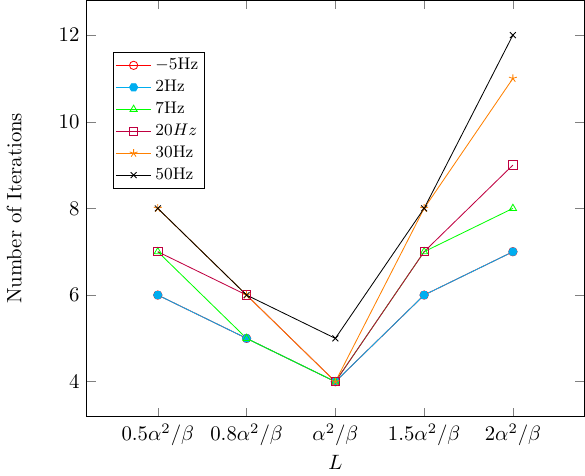}
			\caption{$\alpha=0.5$}
		\end{subfigure}
		\hfill
		\begin{subfigure}[t]{0.47\textwidth}
			\centering
			\includegraphics[width=0.95\linewidth]{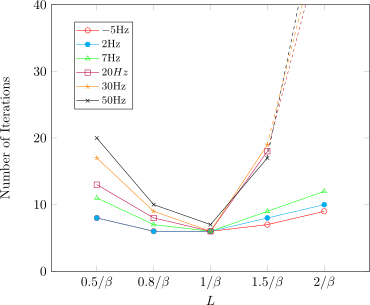}
			\caption{$\alpha=1$}
		\end{subfigure}
		
		\caption{Test case 1 - Number of iterations for different frequencies with different stabilization parameters with a mesh diameter of $5.657$m. In (a) and (b) $P$- and $S$- wave velocity is 5800m/s and 2410 m/s and $\mu=13.8$GPa. In (c) and (d) $P$- and $S$- wave velocity is 4800m/s and 1410 m/s and $\mu=4.7$GPa. Dashed lines indicate convergence achieved after 80 iterations. }
		\label{fig: example 1 varying alpha}
	\end{figure}
	
	\begin{figure}
		\centering
		\includegraphics[width=0.6\linewidth]{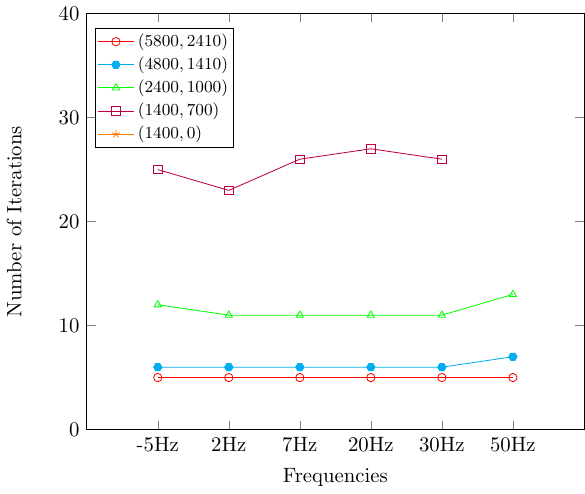}
		\caption{Test case 1 - Number of iterations for $\alpha=1$ with $L=\beta^{-1}$ for different $v_p$ and $v_s$ values  with a mesh diameter of $5.657$m. Missing values imply divergence.}
		\label{fig: example 1 wavespeeds}
	\end{figure}
	
	
	\subsection{Test case 2 - Heterogeneous isotropic 2D model}

	We consider a square layered heterogeneous medium of size 1km by 1km, see \Cref{fig: hetrougenous domain}. The mesh is discretized into 130118 triangles with a minimum mesh size of $2.5293 m$ and a maximum of $8.00094 m$. Along interface $\Gamma_{12}$ we have 280 elements and along $\Gamma_{23}$ we have 300 elements. The source \eqref{eq: source term examples} is centered in the middle of the domain, i.e. $\textbf{x}_s=(500,500)$ and acts in the $y$-direction, i.e. $\textbf{b}=[0,1]^{T}$. We consider a fluid density \(\rho_f\) = 1000 kg/m\(^3\), viscosity \(\eta\) = \(1 \times 10^{-3}\) Pa·s and a bulk modulus of the fluid \(K_f\) = $2.2$ GPa. We again apply homogeneous boundary conditions with an absorbing region $d_{AB}=200m$. Since the medium is heterogeneous, we consider a spatially dependent stabilization parameter. By considering only one region or the worst-case scenario, the convergence is either very slow or the scheme diverges. The y-component of the velocity at 30Hz is displayed in \Cref{fig: example 2 velocity y comp solution}. As expected, we see that the wavelength is longer in Facies 1 than in Facies 3 due to the difference in wave speed.

	\begin{table}[ht]
		\renewcommand{\arraystretch}{1.1} 
		\caption{Test case 2 - Poroelastic constants of the heterogeneous medium}
		\begin{tabular}{llll}
			\hline
			\textbf{Parameter} [unit]& \textbf{Facies 1} & \textbf{Facies 2}  & \textbf{Facies 3}\\ \hline
			P-wave velocity (\(v_p\)) [m/s] & 3200  & 2400 & 1900\\ 
			S-wave velocity (\(v_s\)) [m/s]& 1900 & 1400 & 1200\\ 
			Solid density (\(\rho_s\)) [kg/m\(^3\)]& 2650 & 2600 & 2550  \\ 
			Porosity (\(\phi\)) & 0.15 & 0.3 & 0.35\\ 
			Permeability (\(k\)) [m\(^2\)] & \(1 \times 10^{-12}\) &\(1 \times 10^{-11}\) &\(5 \times 10^{-11}\)  \\ 
			Bulk modulus of the solid matrix (\(K_s\)) [GPa]& 15  & 20 & 40\\
			Shear modulus (\(\mu\)) [GPa] & 8.67 & 4.16 & 2.89\\ 
			Effective stress coefficient (\(\alpha\)) & 0.073 &0.2&0.75\\ \hline
		\end{tabular}
		\label{tab:poroelastic_constants2}
	\end{table}
	
	\begin{figure}[H]
		\centering
		\begin{subfigure}[t]{0.49\textwidth}
			\centering
			\includegraphics[width=\linewidth]{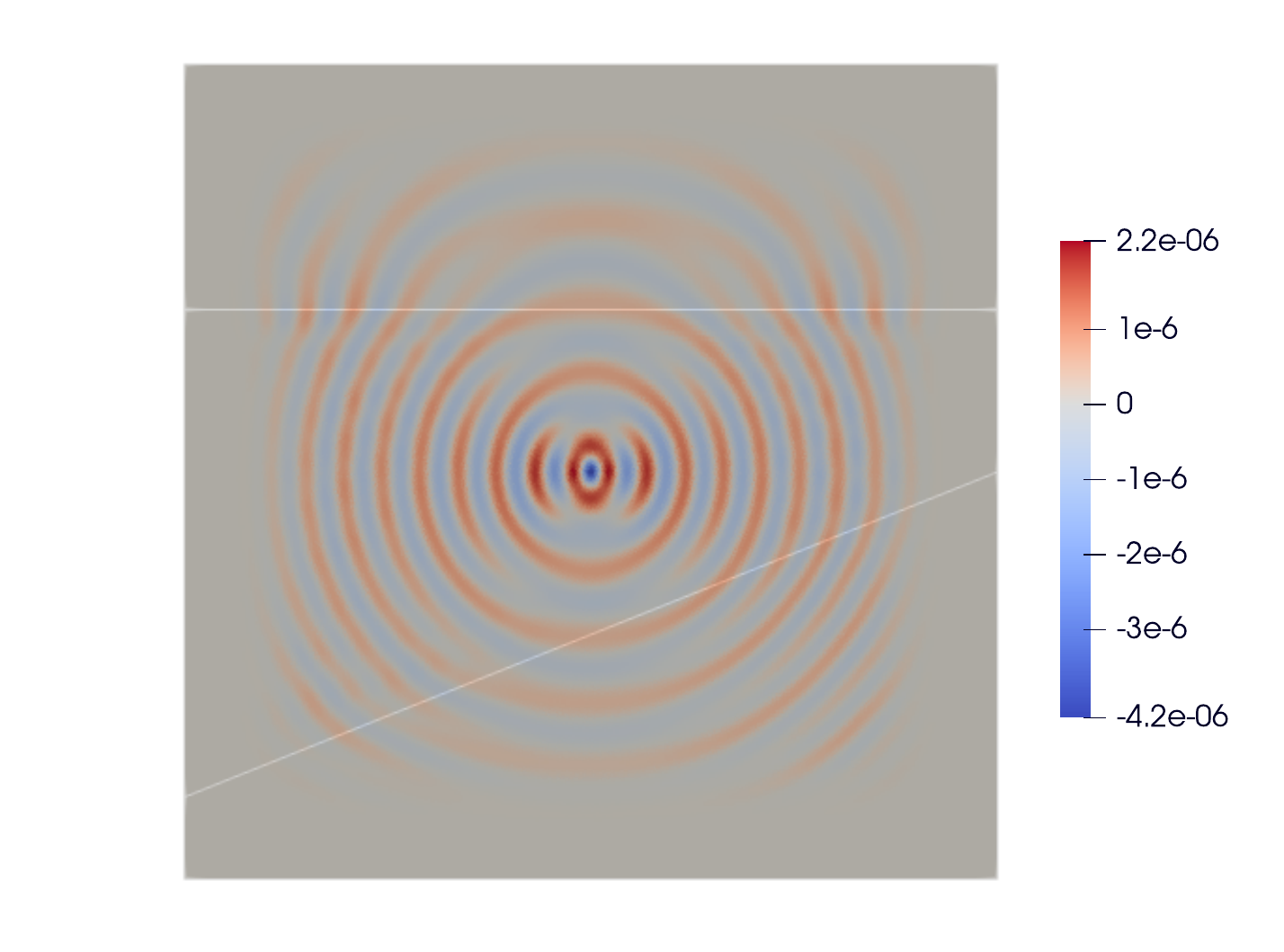}
			\caption{Real part}
		\end{subfigure}
		\hfill
		\begin{subfigure}[t]{0.49\textwidth}
			\centering
			\includegraphics[width=\linewidth]{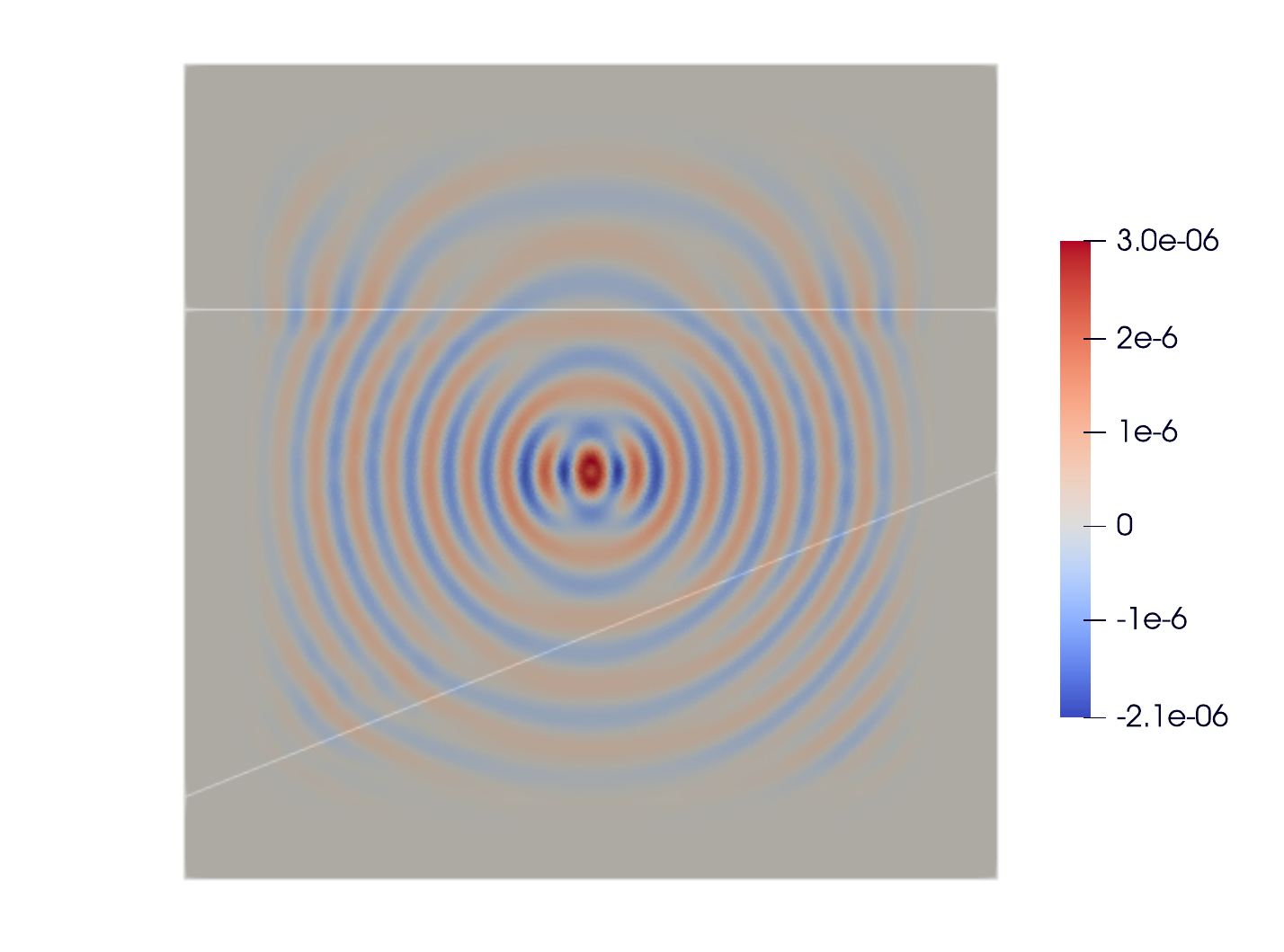}
			\caption{Imaginary part}
		\end{subfigure}
		
		\caption{Test case 2 - Numerical solution of $\textbf{v}_y$ at 30Hz with 130118 number of elements.}
		\label{fig: example 2 velocity y comp solution}
	\end{figure}
	The number of iterations for different stabilization parameters is reported in \Cref{fig: example 2 number of iterations}. Here, the number of iterations is again smallest for $L=\alpha^{2}/\beta$. We also observe that moving further away from this parameter results in significantly more iterations in the heterogeneous case. Again, for suboptimal stabilization parameters, the number of iterations increases with the frequency.

	\begin{figure}[H]
		\centering
		\includegraphics[width=0.5\linewidth]{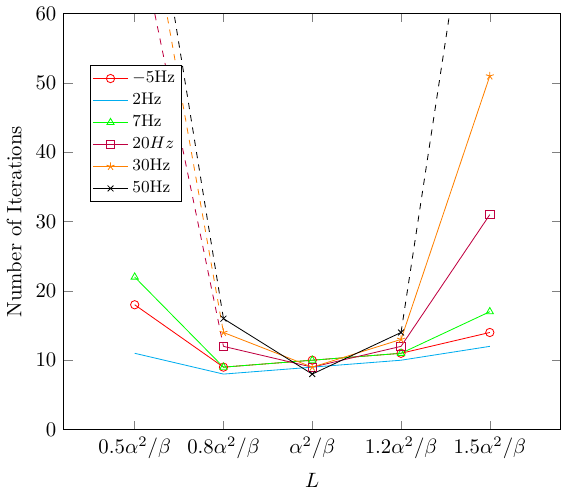}
		\caption{Test case 2 - Number of iterations for different stabilization parameters varying in space with the facies with 130118 number of elements. Dashed lines indicate convergence achieved after 100 iterations.}
		\label{fig: example 2 number of iterations}
		
	\end{figure}

	\subsection{Test Case 3 — 3D Anisotropic Heterogeneous Model (using PSM)}
	
	The previous examples in Sections~5.1 and~5.2 employed finite-element discretization in two-dimensional isotropic settings. To demonstrate that the proposed \emph{L-stabilized splitting scheme} is not tied to a specific spatial discretization and remains robust in more complex scenarios, we consider a fully three-dimensional model with \emph{vertical transverse isotropy (VTI)} and implement the solver using a \emph{pseudospectral method (PSM)}. This choice emphasizes the generality of the splitting approach and its applicability to large-scale problems on regular grids, where FFT-based techniques offer computational advantages. Pseudospectral methods have a long tradition in wave modeling and have been widely used in poroelastic simulations by Carcione and others~\cite{carcione1995, Fornberg1987, Boyd2001, Trefethen2000}.
		\begin{figure}
		\centering
		\includegraphics[width=0.95\linewidth]{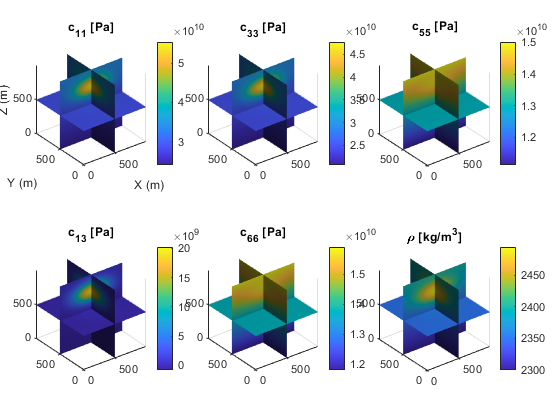}
		\caption{Test case 3 - Elastic stiffness components $(c_{11},c_{33},c_{55},c_{13},c_{66})$ and density $\rho$ of a factorized VTI medium with a Gaussian anomaly.}
	\end{figure}


	
	The computational domain measures $1000\,\mathrm{m} \times 1000\,\mathrm{m} \times 1000\,\mathrm{m}$ and is discretized on a uniform Cartesian grid with a grid size equal to 4.971 m.  The heterogeneous (but smooth) anisotropic poroelastic model can be decomposed into a factorized VTI model \cite{Xu2016DivingVTI}, with constant anisotropy parameters, depth-dependent isotropic parts of the tensorial coefficient fields and a Gaussian anomaly. The effective stress tensor was computed from the corresponding undrained stiffness field using standard tensorial poroelastic relations~\cite{Carcione2022}. In other words, the poroelastic constants were not selected independently of each other, and have physically realistic values. Figures 11 and 12 summarizes the poroelastic parameters used in this test case, including the elastic parameters and the the poro-elastic constants required to fully characterize a VTI poroelastic medium. As one can see in Figures 11 and 12, this 3D anisotropic poroelastic model is heterogeneous but smooth. Therefore, we expect that the pseudo-spectral method used for spatial discretization will be highly accurate. 
		
	\begin{figure}
		\centering
		\includegraphics[width = 0.95\linewidth]{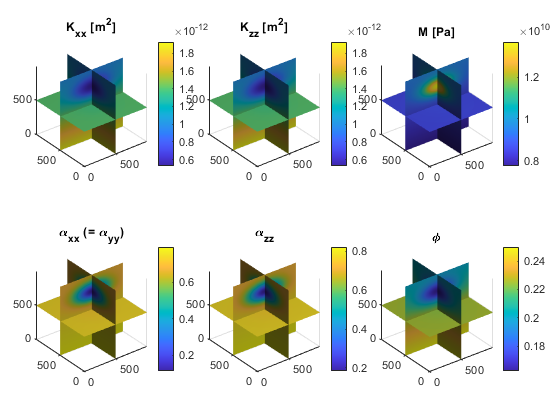}
		\caption{Test case 3 -Poroelastic parameters: $K_{xx}$, $K_{zz}$, $M$ (first row) and $\alpha _{xx}$, $\alpha _{zz}$, $\phi $ (second row). For the fluid viscosity, we used $\eta = 1 $ Cp. }
	\end{figure}
	
	As discussed in Appendix E, our implementation of the L-stabilized splitting scheme for this 3D example is matrix-free: spatial derivatives are computed via FFT/IFFT with wavenumber multipliers, while coefficient variations are applied pointwise in physical space. Therefore, our implementation has good memory management and is relatively efficient compared with the FEM implementation discussed earlier.  Krylov solvers (GMRES) handle block operator inversions efficiently without assembling large matrices~\cite{saad1986gmres}. To maintain accuracy for variable coefficients and mitigate Gibbs phenomena, we apply standard aliasing-control strategies such as the 2/3-rule and filtering~\cite{Fornberg1987,Boyd2001}. 
	Absorbing boundaries are implemented using complex coordinate stretching (PML) following Chew et al. \cite{chew1997} and Bérenger~\cite{berenger1994}. 
	More details about the PSM and our matrix-free implementation of the L-stabilized splitting scheme are provided in Appendix E. 
	
	Figure 13 show orthogonal slices of the computed wavefields $\Ree\{v_x\}$, $\Ree\{v_y\}$, $\Ree\{v_z\}$ and $\Ree\{\dot{p}\}$ at $f = 15\,\mathrm{Hz}$. The results confirm that the splitting scheme converges robustly in this fully anisotropic 3D setting, as illustrated by the iteration history in Figure~14. The total computation time for this 3D anisotropic heterogeneous test case with 8120601 grid blocks (performed using Matlab on a HP Precision work station with a Intel Xeon(R) w7-2475X CPU and a NVIDIA RTX A4000 GPU) was 12.5 min. For heterogeneous but smooth poroelastic models, the pseudospectral method provides excellent accuracy at moderate resolution. For sharp material interfaces, accurate representation of reflected and transmitted waves depends on grid resolution and aliasing control, as discussed in Appendix E. While FEM remains more flexible for complex geometries and unstructured meshes, this example demonstrates that the proposed splitting approach integrates seamlessly with FFT-based solvers, making it an attractive option for large-scale modeling on regular grids.

	\begin{figure}
		\centering
		\includegraphics[width = 0.95\linewidth]{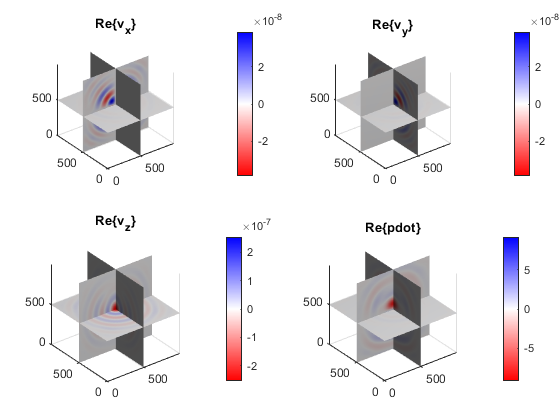}
		\caption{Test case 3 - Computed wavefields: orthogonal slices of $\Ree\{v_x\}$, $\Ree\{v_y\}$, $\Ree\{v_z\}$, and $\Ree\{\dot{p}\}$ at $f=15~\mathrm{Hz}$.}
	\end{figure}
	
	\begin{figure}[htbp]
		\begin{minipage}[c]{0.3\textwidth}
			\caption{Test case 3 - Outer and inner iteration convergence history for operator-$L$ stabilization. By outer iteration, we mean the iterations in the L-stabilized splitting scheme. By inner iterations, we mean the number of iterations required to solve the linear systems associated with the velocity and pressure rate equation at each outer iteration using the GMRES iterative method.}
			\label{fig:my_figure}
		\end{minipage}\hfill
		\begin{minipage}[c]{0.69\textwidth}
			\includegraphics[width=\textwidth]{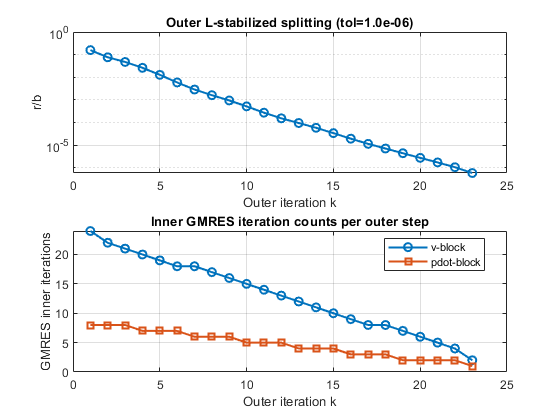}
		\end{minipage}
	\end{figure}

	\FloatBarrier

	\section*{6 Concluding Remarks}\label{sec:6}
	
	We formulated the dynamic Biot–Allard equations in the frequency domain and introduced
	a velocity–pressure-rate representation that restores operator structure under the energy inner
	product: conservative couplings become skew-adjoint in the quasi-static limit, dissipative cou-
	plings remain adjoint and positive-definite, enabling rigorous analysis of a stabilized splitting
	scheme with proven continuity and coercivity properties (up to the high-frequency scatter-
	ing limit). Numerical tests in homogeneous and heterogeneous media confirmed robustness
	and frequency-dependent attenuation. Most numerical results use FEM for compatibility with
	complex geological structures and operator analysis, but a 3D anisotropic example with PSM
	demonstrates discretization-agnostics and scalability. Future work may target slow P-waves and
	the inverse problem of determining the fluid mobility from seismic dispersion and attenuation.
	
	Finally, we remark that the present 2-field formulation, combined with an L-stabilized iterative splitting, provides a robust, efficient, and structurally well-founded framework for the problems considered here. It can be viewed as a natural extension to the dynamic regime of flow-oriented splitting strategies developed for quasi-static Biot systems, while still capturing the essential features of wave propagation, as demonstrated by the numerical results. As a complementary direction, ongoing work explores a reciprocity-aware 4-field formulation, closer in spirit to approaches commonly used in computational poroelastic wave modeling. By retaining the full block structure of the governing operators, this setting may offer additional flexibility for designing iterative splittings acting on physically meaningful sub-operators, while both perspectives remain consistent descriptions of the same underlying multiphysics system.
\FloatBarrier
	

	
	
	

	\section*{Acknowledgements} All authors acknowledge the support of the VISTA program, The Norwegian Academy of Science and Letters, and Equinor.


	\appendix

	\section{Time-Domain Formulation of Dynamic Biot Theory}
	\label{app:time_domain}
	
	For completeness, we summarize the time-domain formulation of dynamic Biot theory
	underlying the frequency-domain equations presented in Section~\ref{sec: 2}. The
	formulation is expressed in terms of the solid displacement $\mathbf{u}(\mathbf{x},t)$,
	the relative fluid--solid displacement
	\[
	\mathbf{w}(\mathbf{x},t) = \phi\big(\mathbf{U}(\mathbf{x},t)-\mathbf{u}(\mathbf{x},t)\big),
	\]
	and the pore pressure $p(\mathbf{x},t)$.
	
	We consider a fully saturated porous medium under small-strain conditions. Material
	properties may be anisotropic and spatially varying. Dissipative and dispersive
	effects are represented through temporal convolution operators.
	
	\medskip
	
	\noindent
	\textbf{Momentum balance for the solid phase.}
	\begin{equation}
		\nabla \cdot \boldsymbol{\sigma}
		- \rho\,\frac{\partial^2 \mathbf{u}}{\partial t^2}
		- \rho_f\,\frac{\partial^2 \mathbf{w}}{\partial t^2}
		+ \mathbf{f}
		= \mathbf{0},
		\label{eq:solid_momentum_time}
	\end{equation}
	where $\boldsymbol{\sigma}$ is the total stress tensor, $\mathbf{f}$ is an external
	body-force density, $\rho = (1-\phi)\rho_s + \phi\rho_f$ is the bulk density, and
	$\rho_f$ is the fluid density.
	
	\medskip
	
	\noindent
	\textbf{Fluid momentum equation}
	\begin{equation}
		\frac{\partial \mathbf{w}}{\partial t}
		+ \int_{-\infty}^{t} \mathbf{B}(t-\tau)
		\cdot
		\left[
		\nabla p(\tau)
		+ \rho_f \frac{\partial^2 \mathbf{u}}{\partial \tau^2}
		\right]
		\,\mathrm{d}\tau
		= \mathbf{0},
		\label{eq:fluid_momentum_time}
	\end{equation}
	where $\mathbf{B}(t)$ is a causal tensor-valued kernel related to the dynamic mobility.
	
	\medskip
	
	\noindent
	\textbf{Mass conservation.}
	\begin{equation}
		\frac{\partial}{\partial t}
		\left[
		\frac{p}{M}
		+ \boldsymbol{\alpha} : \boldsymbol{\varepsilon}(\mathbf{u})
		+ \nabla \cdot \mathbf{w}
		\right]
		- q
		= 0,
		\label{eq:mass_balance_time}
	\end{equation}
	where $M$ is the Biot modulus, $\boldsymbol{\alpha}$ is the Biot effective stress
	tensor, and $q$ is the rate of fluid-content change per unit bulk volume.
	
	\medskip
	
	\noindent
	\textbf{Constitutive relation.}
	\begin{equation}
		\boldsymbol{\sigma}(\mathbf{x},t)
		=
		\int_{-\infty}^{t}
		\mathbf{C}(t-\tau)
		:
		\boldsymbol{\varepsilon}(\mathbf{u})(\mathbf{x},\tau)
		\,\mathrm{d}\tau
		-
		\boldsymbol{\alpha}\,p(\mathbf{x},t),
		\label{eq:constitutive_time}
	\end{equation}
	where $\mathbf{C}(t)$ is a causal fourth-order tensor kernel and
	\[
	\boldsymbol{\varepsilon}(\mathbf{u})
	= \tfrac{1}{2}\big(\nabla \mathbf{u} + \nabla \mathbf{u}^\top\big)
	\]
	is the small-strain tensor.
	
	\medskip
	
	\noindent
	\textbf{Frequency-domain reduction.}
	Assuming time-harmonic fields of the form
	\[
	\mathbf{u}(\mathbf{x},t) = \mathbf{u}(\mathbf{x}) e^{i\omega t},
	\quad
	\mathbf{w}(\mathbf{x},t) = \mathbf{w}(\mathbf{x}) e^{i\omega t},
	\quad
	p(\mathbf{x},t) = p(\mathbf{x}) e^{i\omega t},
	\]
	and applying the Fourier transform in time with the convention $e^{i\omega t}$,
	temporal derivatives are replaced according to
	\[
	\frac{\partial}{\partial t} \;\mapsto\; i\omega,
	\qquad
	\frac{\partial^2}{\partial t^2} \;\mapsto\; -\omega^2.
	\]
	
	The convolution operators reduce to multiplication by frequency-dependent tensors,
	\[
	\mathbf{B}(\omega) = \int_{0}^{\infty} \mathbf{B}(t) e^{-i\omega t}\,\mathrm{d}t,
	\qquad
	\mathbf{C}(\omega) = \int_{0}^{\infty} \mathbf{C}(t) e^{-i\omega t}\,\mathrm{d}t.
	\]
	By a slight abuse of notation, the same symbol $\mathbf{B}$ is used for the
	causal time-domain kernel and its Fourier transform.
	Substitution into \eqref{eq:solid_momentum_time}--\eqref{eq:constitutive_time}
	then yields the frequency-domain system given by equations (2)--(4) in the main text.

	\setcounter{theorem}{0}

	\section{Useful identities}\label{sec: useful identities}
	\begin{lemma}[Binomial identity]
		\renewcommand{\thelemma}{1}
		\label{lem: stabilization identity}For any $x,y$ in a complex Hilbert space and any $L > 0$,
		\[
		\mathrm{Re}\,\langle L(x-y), x \rangle = \tfrac{L}{2}\left( \|x\|^2 + \|x-y\|^2 - \|y\|^2 \right).
		\]
	\end{lemma}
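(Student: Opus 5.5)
The identity follows from expanding the squared norm of the difference in the complex inner product. Since $L>0$ is a real scalar and the inner product $\langle\cdot,\cdot\rangle$ is linear in the first argument, we have $\mathrm{Re}\,\langle L(x-y),x\rangle = L\,\mathrm{Re}\,\langle x-y,x\rangle$. So it suffices to prove the case $L=1$, namely
\[
\mathrm{Re}\,\langle x-y, x\rangle = \tfrac12\big(\|x\|^2+\|x-y\|^2-\|y\|^2\big).
\]

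First, expand $\|x-y\|^2=\langle x-y,x-y\rangle$ by sesquilinearity. This gives $\|x\|^2-\langle x,y\rangle-\langle y,x\rangle+\|y\|^2$. Since $\langle y,x\rangle=\overline{\langle x,y\rangle}$, the two cross terms combine to $2\,\mathrm{Re}\,\langle x,y\rangle$. Hence
\[
\|x-y\|^2=\|x\|^2+\|y\|^2-2\,\mathrm{Re}\,\langle x,y\rangle .
\]

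Second, substitute this into the right-hand side:
\[
\tfrac12\big(\|x\|^2+\|x-y\|^2-\|y\|^2\big)=\tfrac12\big(2\|x\|^2-2\,\mathrm{Re}\,\langle x,y\rangle\big)=\|x\|^2-\mathrm{Re}\,\langle x,y\rangle .
\]

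Third, expand the left-hand side directly. We get $\mathrm{Re}\,\langle x-y,x\rangle=\|x\|^2-\mathrm{Re}\,\langle y,x\rangle$. Because $\langle y,x\rangle$ and $\langle x,y\rangle$ are complex conjugates, they have the same real part, so this equals $\|x\|^2-\mathrm{Re}\,\langle x,y\rangle$. The two sides therefore agree, and multiplying by $L$ gives the lemma.

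The only delicate point is keeping track of complex conjugation: a cross term $\langle y,x\rangle$ alone is not real, and the identity holds only after taking real parts. This is exactly why the lemma is stated with $\mathrm{Re}$, and why it is applied in Step 4 of the proof of \Cref{thm: main theorem} to the real part of the energy identity. It is used there with $x=\mathbf{e}_1^{k+1}$ and $y=\mathbf{e}_1^{k}$. Nothing else is needed: no coercivity assumption and no property of the operators enters.
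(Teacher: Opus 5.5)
Your proof is correct: the reduction to $L=1$, the expansion $\|x-y\|^2=\|x\|^2+\|y\|^2-2\,\mathrm{Re}\,\langle x,y\rangle$, and the observation $\mathrm{Re}\,\langle y,x\rangle=\mathrm{Re}\,\langle x,y\rangle$ together give the identity. The paper states this lemma without proof and uses it only in Step 4 of the main convergence theorem with $x=\mathbf{e}_1^{k+1}$, $y=\mathbf{e}_1^{k}$; your expansion is the argument it relies on, and your reduction in fact needs only that $L$ is real, not that $L>0$.
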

	
	
	\begin{lemma}[Difference estimate]\label{lem: difference estimate}
		If $A_{22} e_2^{k+1} + A_{21} e_1^{k+1} = 0$, then
		\[
		\|e_2^{k+1} - e_2^{k}\| \le \frac{\|A_{21}\|}{\alpha_{22}} \|e_1^{k+1} - e_1^{k}\|.
		\]
	\end{lemma}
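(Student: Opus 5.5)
The plan is to exploit the linearity of the pressure-rate error equation \eqref{eq: error equation two} and the coercivity of $\mathcal{A}_{22}$, in the same way Step~1 of the proof of \Cref{thm: main theorem} uses coercivity. The hypothesis of the lemma is the error equation at iteration $k+1$. Since Step~2 of \Cref{alg: iterative} is executed at every outer iteration, the analogous identity also holds at iteration $k$, namely $A_{22}e_2^{k}+A_{21}e_1^{k}=0$. (For $k=0$ this is ensured by starting the error recursion after the first pressure solve, or by choosing $P^0$ consistently with $\mathbf{V}^0$.) Subtracting the two identities and writing $\delta_2:=e_2^{k+1}-e_2^{k}$ and $\boldsymbol{\delta}_1:=e_1^{k+1}-e_1^{k}$, linearity gives
\[
A_{22}\,\delta_2 = -\,A_{21}\,\boldsymbol{\delta}_1 .
\]

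Next I test this identity with $\delta_2$ in the $L^2$ inner product and take real parts. The coercivity assumption on the diagonal block (Appendix~C), $\Ree\langle A_{22}r,r\rangle\ge \alpha_{22}\|r\|^2$, applies to the left-hand side. The right-hand side is bounded by Cauchy--Schwarz and the boundedness of the coupling operator $A_{21}$ (Appendix~D). Together these yield
\[
\alpha_{22}\|\delta_2\|^2 \le \Ree\langle A_{22}\delta_2,\delta_2\rangle = -\Ree\langle A_{21}\boldsymbol{\delta}_1,\delta_2\rangle \le \|A_{21}\|\,\|\boldsymbol{\delta}_1\|\,\|\delta_2\| .
\]
If $\delta_2=0$ the claim is trivial. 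Otherwise I divide by $\alpha_{22}\|\delta_2\|$, which gives exactly $\|e_2^{k+1}-e_2^k\|\le \frac{\|A_{21}\|}{\alpha_{22}}\|e_1^{k+1}-e_1^k\|$.

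The algebra is routine, so the main obstacle is making the norms consistent. $A_{21}$ is continuous as a map $\bm{H}^1\to L^2$, not $\bm{L}^2\to L^2$, so $\|A_{21}\|$ must be read in the same sense as in Step~4 of the proof of \Cref{thm: main theorem}. I would handle this at the Galerkin level. In finite dimensions every operator is bounded, and $\|A_{21}\|$ is the induced operator norm with respect to the $\bm{L}^2$ and $L^2$ norms on $\bm{V}_h$ and $Q_h$; this norm is possibly $h$-dependent. The coercivity constant $\alpha_{22}$ must likewise be the $L^2$-coercivity constant. The storage term $\beta\|r\|^2$ supplies this, provided the mobility part $-\frac{1}{i\omega}\langle \mathbf{B}\nabla r,\nabla r\rangle$ has nonnegative real part, and that is what the passivity of $\mathbf{B}$ is used for.
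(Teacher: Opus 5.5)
Your proof is correct and is essentially the paper's. Both subtract the step-$k$ pressure error equation from the step-$(k+1)$ one, bound $A_{22}$ below by $\alpha_{22}$, and bound $A_{21}$ above. The only difference is how you get the lower bound: you use $\Ree\langle A_{22}r,r\rangle\ge\alpha_{22}\|r\|^2$ together with Cauchy--Schwarz, whereas the paper takes norms and simply posits $\|A_{22}x\|\ge\alpha_{22}\|x\|$. Your step in fact derives that bound from the coercivity lemma of Appendix~C. Your caveats about $k=0$ and about reading $\|A_{21}\|$ consistently (on $\bm{H}^1$ versus $\bm{L}^2$) are fair points that the paper passes over.

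There is one slip in your closing remark. Since $\langle\mathcal{A}_{22}r,r\rangle=\langle r/M,r\rangle+\frac{1}{i\omega}\langle\mathbf{B}\nabla r,\nabla r\rangle$, the mobility term contributes $\frac{1}{\omega}\operatorname{Im}\langle\mathbf{B}\nabla r,\nabla r\rangle$ to the real part, not something with your sign $-\frac{1}{i\omega}$. The relevant hypothesis is therefore $\mathbf{B}_{\mathrm{H}}\succeq 0$, as in Appendix~C, not the real-part passivity condition $\Ree[\mathbf{v}^{*}\cdot\mathbf{B}\cdot\mathbf{v}]\ge 0$.
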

	
	\begin{proof}
		Let $\delta e_2 = e_2^{k+1} - e_2^k$, $\delta e_1 = e_1^{k+1} - \bm{e}_{1}^{k}$. From the assumption:
		\[
		A_{22} e_2^{k+1} = -A_{21} e_1^{k+1}.
		\]
		Subtracting the same equation at step $k$:
		\[
		A_{22} \delta e_2 = -A_{21} \delta e_1.
		\]
		Taking norms:
		\[
		\|A_{22} \delta e_2\| = \|A_{21} \delta e_1\| \le \|A_{21}\| \cdot \|\delta e_1\|.
		\]
		Assuming $A_{22}$ is coercive with constant $\alpha_{22} > 0$:
		\[
		\|A_{22} x\| \ge \alpha_{22} \|x\| \quad \text{for all } x,
		\]
		we get:
		\[
		\alpha_{22} \|\delta e_2\| \le \|A_{21}\| \cdot \|\delta e_1\|,
		\]
		which gives the desired inequality.
	\end{proof}

	\begin{lemma}[Cauchy--Schwarz and Young's inequality]
		Let $x, y$ be elements of a Hilbert space. Then
		\[
		|\langle x, y \rangle| \le \|x\| \cdot \|y\|.
		\]
		Moreover, for all $a, b \ge 0$ and any $\tau > 0$, we have
		\[
		ab \le \frac{\tau}{2} a^2 + \frac{1}{2\tau} b^2.
		\]
	\end{lemma}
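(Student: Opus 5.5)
The statement has two independent parts, and I would prove each by expanding a nonnegative square. The complex Hilbert space setting only matters for Cauchy--Schwarz, where the conjugate-linearity of the inner product in its second argument must be respected, consistent with the convention $\langle \mathbf{v},\mathbf{w}\rangle=\int_\Omega \mathbf{v}\cdot\overline{\mathbf{w}}\,\mathrm{d}\mathbf{x}$ fixed earlier.

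\emph{Cauchy--Schwarz.} If $y=0$ both sides vanish, so assume $y\neq 0$. For a complex scalar $t$, positivity of the norm gives
\[
0\le \|x-ty\|^2=\|x\|^2-\overline{t}\,\langle x,y\rangle - t\,\overline{\langle x,y\rangle}+|t|^2\|y\|^2 .
\]
I would choose $t=\langle x,y\rangle/\|y\|^2$. Then the two cross terms each equal $|\langle x,y\rangle|^2/\|y\|^2$, and the last term equals the same quantity. The inequality therefore collapses to
\[
0\le \|x\|^2-\frac{|\langle x,y\rangle|^2}{\|y\|^2}.
\]
Multiplying by $\|y\|^2$ and taking square roots gives $|\langle x,y\rangle|\le\|x\|\,\|y\|$.

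\emph{Young's inequality.} For $a,b\ge 0$ and $\tau>0$, I would expand
\[
0\le \tfrac12\bigl(\sqrt{\tau}\,a-b/\sqrt{\tau}\bigr)^2=\tfrac{\tau}{2}a^2-ab+\tfrac{1}{2\tau}b^2 ,
\]
and rearranging yields the claim.

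The only step that needs any care is the choice of $t$ in the complex case. Taking $t$ real, as one would in a real Hilbert space, only controls $\operatorname{Re}\langle x,y\rangle$ and not $|\langle x,y\rangle|$. An alternative route is to first rotate $x$ by a unimodular factor so that $\langle x,y\rangle$ becomes real and then apply the real argument. Everything else is routine algebra.
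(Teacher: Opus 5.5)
Your proof is correct and complete. The paper gives no proof of this lemma: it is recorded in the appendix as a standard fact and used in Step~4 of the convergence proof with $\tau=\alpha_{11}$. Your argument therefore supplies the textbook proof the paper omits. Your choice $t=\langle x,y\rangle/\|y\|^2$ correctly respects the conjugate-linear second slot of the paper's complex inner product. Your square-completion for Young's inequality holds for all real $a,b$, so the restriction $a,b\ge 0$ is not needed.
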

	

	\section{Properties of the diagonal block operators}\label{sec: appendix diagonal}

	\subsection{Coercivity of \texorpdfstring{$\calA_{11}$}{TEXT}}
	
	This section establishes coercivity estimates for the elastic and poroelastic operators used in the frequency-domain formulation of wave propagation, which are essential for the iterative splitting-scheme.
	
	\begin{lemma}[Coercivity of the elastic Helmholtz operator]
		Let $C_{\Omega}$ be the Poincaré constant of $\bm{H}_{0}^{1}$. 
		Fix $\omega \ge 0$ and $\rho \ge 0$, and define for $\mathbf{v} \in \bm{H}_{0}^{1}$
		\[
		A_{\mathrm{el}}\mathbf{v}
		= -\,\rho\,\omega^2\,\mathbf{v} 
		- \nabla \cdot \big(\mathbf{C}(\omega) : \nabla \mathbf{v}\big).
		\]
		Assume that the medium resists   
		\[
		\operatorname{Re}\big(\mathbf{C}(\omega)\mathbf{X}\big):\mathbf{X} \ge c_{\min} |\mathbf{X}|^2
		\quad \text{for all } \mathbf{X} \in \mathbb{C}^{d \times d} \text{ a.e. in } \Omega.
		\]
		Then, for all $\mathbf{v} \in \bm{H}_{0}^{1}$,
		\begin{equation}
			\operatorname{Re}\langle A_{\mathrm{el}}\mathbf{v}, \mathbf{v}\rangle 
			\ge \left( \frac{c_{\min}}{C_{\Omega}^2} - \rho \omega^2 \right) \|\mathbf{v}\|^2.
			\label{eq:elastic_coercivity_bound}
		\end{equation}
		In particular, $A_{\mathrm{el}}$ is coercive when
		$\frac{c_{\min}}{C_{\Omega}^2} > \rho \omega^2$.
	\end{lemma}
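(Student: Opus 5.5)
The plan is to test the operator against $\mathbf{v}$ in the $\bm{L}^2$ inner product, integrate the elliptic part by parts, and then estimate the two resulting terms separately. First consider $\mathbf{v}\in \bm{C}_c^\infty(\Omega)$ and use density in $\bm{H}_0^1$ afterwards; one can also work directly with the weak form on $\bm{H}_0^1$. Because the trace of $\mathbf{v}$ vanishes, integration by parts yields
\[
\langle A_{\mathrm{el}}\mathbf{v},\mathbf{v}\rangle
= -\rho\omega^2\|\mathbf{v}\|^2 + \int_\Omega \big(\mathbf{C}(\omega):\nabla\mathbf{v}\big):\overline{\nabla\mathbf{v}}\,\mathrm{d}\mathbf{x}.
\]
The factor $\rho\omega^2$ is real because $\rho\ge 0$ and $\omega\ge 0$. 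So after taking real parts the inertial term contributes exactly $-\rho\omega^2\|\mathbf{v}\|^2$.

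For the elliptic term I will apply the pointwise assumption with $\mathbf{X}=\nabla\mathbf{v}(\mathbf{x})$ for a.e.\ $\mathbf{x}$. I read $\operatorname{Re}(\mathbf{C}\mathbf{X}):\mathbf{X}$ as $\operatorname{Re}\big[(\mathbf{C}\mathbf{X}):\overline{\mathbf{X}}\big]$, which is the only reading that makes sense for complex $\mathbf{X}$. With that reading, integrating over $\Omega$ gives
\[
\operatorname{Re}\int_\Omega (\mathbf{C}:\nabla\mathbf{v}):\overline{\nabla\mathbf{v}}\,\mathrm{d}\mathbf{x} \ge c_{\min}\|\nabla\mathbf{v}\|^2 .
\]
Next I apply Poincaré's inequality on $\bm{H}_0^1$ in the form $\|\mathbf{v}\|\le C_\Omega\|\nabla\mathbf{v}\|$, which gives $\|\nabla\mathbf{v}\|^2\ge C_\Omega^{-2}\|\mathbf{v}\|^2$. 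Combining this with the inertial term produces \eqref{eq:elastic_coercivity_bound}. The coercivity claim is then immediate: the constant $c_{\min}/C_\Omega^2-\rho\omega^2$ is positive exactly when $c_{\min}/C_\Omega^2>\rho\omega^2$.

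The main point needing care is the justification of $\langle A_{\mathrm{el}}\mathbf{v},\mathbf{v}\rangle$ for general $\mathbf{v}\in\bm{H}_0^1$. For such $\mathbf{v}$, $A_{\mathrm{el}}\mathbf{v}$ only lies in $\bm{H}^{-1}$, so I will interpret the pairing as the $\bm{H}^{-1}$–$\bm{H}_0^1$ duality, which coincides with the weak form above. This requires $\mathbf{C}(\omega)\in L^\infty$ so that the sesquilinear form is bounded and the density argument goes through.

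A minor related point concerns the tensor acting on $\nabla\mathbf{v}$ rather than on $\boldsymbol{\varepsilon}(\mathbf{v})$. Since the assumption is stated for all $\mathbf{X}\in\mathbb{C}^{d\times d}$, no Korn inequality is needed. If one instead assumed ellipticity only on symmetric matrices, Korn's inequality on $\bm{H}_0^1$ would be inserted before Poincaré, at the cost of a constant.
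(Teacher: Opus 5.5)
Your proposal is correct and follows the paper's proof essentially step for step: integrate by parts against $\mathbf{v}$ (boundary terms vanish on $\bm{H}_0^1$), take real parts so the inertial term contributes exactly $-\rho\omega^2\|\mathbf{v}\|^2$, then apply the pointwise ellipticity with $\mathbf{X}=\nabla\mathbf{v}$ and Poincar\'e's inequality $\|\nabla\mathbf{v}\|^2\ge C_\Omega^{-2}\|\mathbf{v}\|^2$. Your additional remarks are sound refinements of details the paper leaves implicit: reading the pairing as the $\bm{H}^{-1}$--$\bm{H}_0^1$ duality, the need for $\mathbf{C}(\omega)\in L^\infty$, and interpreting the hypothesis as $\operatorname{Re}[(\mathbf{C}\mathbf{X}):\overline{\mathbf{X}}]\ge c_{\min}|\mathbf{X}|^2$.
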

	
	\begin{proof}
		For $\mathbf{v} \in \bm{H}_{0}^{1}$, integration by parts (with vanishing boundary terms) yields
		\[
		\langle -\nabla\!\cdot(\mathbf{C}:\nabla\mathbf{v}),\,\mathbf{v}\rangle 
		= \langle \mathbf{C}:\nabla\mathbf{v},\,\nabla\mathbf{v}\rangle .
		\]
		Hence
		\[
		\operatorname{Re}\langle A_{\mathrm{el}}\mathbf{v}, \mathbf{v}\rangle 
		= -\,\rho\,\omega^2\,\|\mathbf{v}\|^2
		+ \operatorname{Re}\langle \mathbf{C}:\nabla\mathbf{v},\,\nabla\mathbf{v}\rangle .
		\]
		By ellipticity of $\operatorname{Re}\mathbf{C}$ and Poincaré’s inequality,
		\[
		\operatorname{Re}\langle \mathbf{C}:\nabla\mathbf{v},\,\nabla\mathbf{v}\rangle 
		\ge c_{\min}\,\|\nabla\mathbf{v}\|^2
		\ge \frac{c_{\min}}{C_{\Omega}^2}\,\|\mathbf{v}\|^2.
		\]
		Combining these bounds proves the claimed lower estimate. If the coefficient of $\|\mathbf{v}\|^2$ on the right-hand side is strictly positive, the operator is coercive on $\bm{L}^{2}$.
	\end{proof}
	
	\setcounter{theorem}{0}

	\begin{corollary}[Coercivity of the full operator $\calA_{11}$]
		Let $A_{\mathrm{el}}$ be the elastic Helmholtz operator defined previously, and assume the same conditions on the stiffness tensor $\mathbf{C}(\omega)$. Let $\mathbf{B}(\omega) \in \mathbb{C}^{d \times d}$ be a complex-valued, symmetric mobility tensor field defined almost everywhere in $\Omega$, representing fluid--solid coupling in a poroelastic medium. Define the full operator
		\begin{equation}
			\calA_{11}\mathbf{v} := A_{\mathrm{el}}\mathbf{v} + i\,\omega^3 \rho_f^2\,\mathbf{B}(\omega)\mathbf{v},
			\quad \text{for } \mathbf{v} \in \bm{H}_{0}^{1}.
			\label{Poroelastic_A11}
		\end{equation}
		Assume that the dissipative effect of $\operatorname{Im}\mathbf{B}(\omega)$ is bounded below in the sense that 
		\begin{equation}
			b_{\min}(\omega) := \inf_{\|\mathbf{q}\|=1} \langle \operatorname{Im}\mathbf{B}(\omega)\mathbf{q}, \mathbf{q}\rangle > 0.
			\label{Poroelastic_A11_Bmin}
		\end{equation}
		Then, for all $\mathbf{v} \in \bm{H}_{0}^{1}$,
		\[
		\operatorname{Re}\,\langle \calA_{11}\mathbf{v}, \mathbf{v}\rangle
		\ge \alpha_{11}(\omega)\,\|\mathbf{v}\|^2,
		\]
		where
		\begin{equation*}
			\alpha_{11}(\omega) := \frac{c_{\min}}{C_{\Omega}^2} - \rho \omega^2 + \omega^3 \rho_f^2 b_{\min}(\omega).
		\end{equation*}
		In particular, $\calA_{11}$ is coercive when $\alpha_{11}(\omega) > 0$.
	\end{corollary}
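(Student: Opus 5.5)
The plan is to split $\operatorname{Re}\langle\calA_{11}\mathbf{v},\mathbf{v}\rangle$ along the definition \eqref{Poroelastic_A11} into the elastic Helmholtz part and the mobility part, and to bound each from below. By linearity of the inner product,
\[
\operatorname{Re}\langle \calA_{11}\mathbf{v},\mathbf{v}\rangle
=\operatorname{Re}\langle A_{\mathrm{el}}\mathbf{v},\mathbf{v}\rangle
+\omega^3\rho_f^2\,\operatorname{Re}\langle i\,\mathbf{B}(\omega)\mathbf{v},\mathbf{v}\rangle .
\]
For the first term I invoke the preceding lemma on the elastic Helmholtz operator directly. Its hypotheses on $\mathbf{C}(\omega)$ and $\mathbf{v}\in\bm{H}_0^1$ are assumed verbatim in the corollary, so \eqref{eq:elastic_coercivity_bound} yields the contribution $\big(c_{\min}/C_\Omega^2-\rho\omega^2\big)\|\mathbf{v}\|^2$.

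For the mobility term I write $\mathbf{B}=\operatorname{Re}\mathbf{B}+i\operatorname{Im}\mathbf{B}$ pointwise. Both parts are real symmetric, because $\mathbf{B}$ is complex symmetric (Onsager). For real symmetric $\mathbf{S}$ and complex $\mathbf{v}$, the scalar $\langle\mathbf{S}\mathbf{v},\mathbf{v}\rangle=\int_\Omega \mathbf{S}\mathbf{v}\cdot\overline{\mathbf{v}}\,dx$ is real. Hence $\operatorname{Re}\mathbf{B}$ contributes only a purely imaginary amount once multiplied by $i$, and it drops out of the real part. Only the reactive/dissipative piece built from $\operatorname{Im}\mathbf{B}$ survives, with modulus $\omega^3\rho_f^2\langle\operatorname{Im}\mathbf{B}\,\mathbf{v},\mathbf{v}\rangle$. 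Applying the definition \eqref{Poroelastic_A11_Bmin} of $b_{\min}(\omega)$ pointwise to $\mathbf{q}=\mathbf{v}(x)/|\mathbf{v}(x)|$ and integrating gives $\langle\operatorname{Im}\mathbf{B}\,\mathbf{v},\mathbf{v}\rangle\ge b_{\min}(\omega)\|\mathbf{v}\|^2$.

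Adding the two lower bounds produces exactly $\alpha_{11}(\omega)\|\mathbf{v}\|^2$. The concluding sentence, that $\calA_{11}$ is coercive when $\alpha_{11}(\omega)>0$, is then immediate.

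The step I expect to be the genuine obstacle is the sign of the surviving mobility term. A literal expansion gives $\operatorname{Re}\langle i\mathbf{B}\mathbf{v},\mathbf{v}\rangle=-\langle\operatorname{Im}\mathbf{B}\,\mathbf{v},\mathbf{v}\rangle$. So the bound as worded holds only if $\operatorname{Im}\mathbf{B}$ in \eqref{Poroelastic_A11_Bmin} is read in the orientation that makes the fluid--solid friction term dissipative, i.e.\ the sign fixed by the passivity/causality convention ($e^{i\omega t}$, $\mathbf{B}(-\omega)=\mathbf{B}(\omega)^*$) stated in Section~2. I would first try to justify this orientation by tracing $-\omega^2\boldsymbol{\rho}_{\mathrm{eff}}$ from \eqref{def_rho_eff} back to the time-domain friction kernel. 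There I would identify the quantity the paper calls ``the dissipative effect of $\operatorname{Im}\mathbf{B}$'' as exactly the Hermitian form produced by $\operatorname{Re}\langle i\mathbf{B}\mathbf{v},\mathbf{v}\rangle$. Under that reading of $\operatorname{Im}\mathbf{B}$, the inequality is proved as stated. Under the naive algebraic reading, the mobility term would instead subtract from $\alpha_{11}$, and I would need to flag this explicitly.
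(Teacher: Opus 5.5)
Your decomposition is the same as the paper's: the elastic lemma for $A_{\mathrm{el}}$, then the real part of the mobility term bounded via $b_{\min}$. Your sign computation is also correct. Since $\operatorname{Re}\mathbf{B}$ and $\operatorname{Im}\mathbf{B}$ are real symmetric, $\operatorname{Re}\langle i\omega^3\rho_f^2\mathbf{B}\mathbf{v},\mathbf{v}\rangle=-\omega^3\rho_f^2\langle\operatorname{Im}\mathbf{B}\,\mathbf{v},\mathbf{v}\rangle$. The paper's proof writes this identity with a plus sign, and that line is wrong. As a result the corollary is false as stated. For a counterexample, take $\mathbf{C}(\omega)\mathbf{X}=c_{\min}\mathbf{X}$, so that $A_{\mathrm{el}}=-\rho\omega^2-c_{\min}\Delta$. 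Let $C_\Omega$ be the sharp Poincar\'e constant and $\mathbf{v}=\varphi_1\mathbf{e}_1$, with $\varphi_1$ the first Dirichlet eigenfunction. Take $\mathbf{B}=(a+ib)\mathbf{I}$ with $a\ge 0$ and $b>0$; this is passive, and $b_{\min}=b$. Then
\[
\operatorname{Re}\langle\calA_{11}\mathbf{v},\mathbf{v}\rangle=\Big(\frac{c_{\min}}{C_\Omega^2}-\rho\omega^2-\omega^3\rho_f^2 b\Big)\|\mathbf{v}\|^2<\alpha_{11}(\omega)\,\|\mathbf{v}\|^2\qquad\text{for every }\omega>0.
\]

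The gap in your proposal is the planned rescue. A hypothesis on $\operatorname{Im}\mathbf{B}$ cannot be re-read as a hypothesis on $-\operatorname{Im}\mathbf{B}$. The conventions of Section~2 also do not fix the sign for you:
\begin{itemize}
\item The passivity condition there constrains $\operatorname{Re}\mathbf{B}$. That is the genuinely dissipative part; it feeds only the imaginary part of the form and drops out here.
\item The relation $\mathbf{B}(-\omega)=\mathbf{B}(\omega)^*$ only makes $\operatorname{Im}\mathbf{B}$ odd in $\omega$.
\end{itemize}
The term you are trying to sign is the reactive, inertial correction to $\operatorname{Re}\boldsymbol{\rho}_{\mathrm{eff}}=\rho+\omega\rho_f^2\operatorname{Im}\mathbf{B}$.

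If you trace it to the time domain as you suggest, the hypothesis fails rather than being reinterpreted. A dynamic Darcy law with fluid inertia (equivalently, a positive exponential kernel in Appendix~A) gives $\mathbf{B}=(1+i\omega\tau)^{-1}\mathbf{B}_0$ with $\tau>0$. Hence $\operatorname{Im}\mathbf{B}\prec 0$ for $\omega>0$, and $b_{\min}>0$ is never satisfied. By contrast, the JKD formula used in Section~5 has $\operatorname{Im}\boldsymbol{\kappa}>0$. There the hypothesis can hold, and the mobility term lowers the constant. Your reading would also contradict the companion lemma on $\mathcal{A}_{22}$, which requires $\mathbf{B}_{\mathrm{H}}=\operatorname{Im}\mathbf{B}\succeq 0$ (for symmetric $\mathbf{B}$).

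What you can actually prove is one of two things. Either you get the stated $\alpha_{11}(\omega)$ with $b_{\min}$ defined through $-\operatorname{Im}\mathbf{B}$. Or you keep the stated hypothesis, add $\operatorname{Im}\mathbf{B}\preceq b_{\max}\mathbf{I}$, and get only the weaker constant $c_{\min}/C_\Omega^2-\rho\omega^2-\omega^3\rho_f^2 b_{\max}$. Your fallback of flagging the sign is the correct outcome; make it the conclusion rather than a caveat.
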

	
	\begin{proof}
		By the coercivity estimate for $A_{\mathrm{el}}$, we have
		\begin{equation*}
			\operatorname{Re}\,\langle A_{\mathrm{el}}\mathbf{v}, \mathbf{v} \rangle
			\ge \left( \frac{c_{\min}}{C_{\Omega}^2} - \rho \omega^2 \right)\|\mathbf{v}\|^2.
		\end{equation*}
		For the mobility term, we compute
		\begin{equation*}
			\operatorname{Re}\,\langle i\,\omega^3 \rho_f^2\,\mathbf{B}(\omega)\mathbf{v}, \mathbf{v} \rangle
			= \omega^3 \rho_f^2\,\langle \operatorname{Im}\mathbf{B}(\omega)\mathbf{v}, \mathbf{v} \rangle.
		\end{equation*}
		Using the lower bound on $\operatorname{Im}\mathbf{B}(\omega)$, we obtain
		\begin{equation*}
			\langle \operatorname{Im}\mathbf{B}(\omega)\mathbf{v}, \mathbf{v} \rangle
			\ge b_{\min}(\omega)\,\|\mathbf{v}\|^2.
		\end{equation*}
		Combining the estimates yields
		\begin{equation*}
			\operatorname{Re}\,\langle \calA_{11}\mathbf{v}, \mathbf{v} \rangle
			\ge \left( \frac{c_{\min}}{C_{\Omega}^2} - \rho \omega^2 + \omega^3 \rho_f^2 b_{\min}(\omega) \right)\|\mathbf{v}\|^2,
		\end{equation*}
		which proves the claim.
	\end{proof}
	
	\setcounter{theorem}{4}

	\begin{remark}[Physical interpretation of coercivity for the elastic Helmholtz operator]
		The coercivity estimate established in Lemma~3 reflects a fundamental physical balance between elastic restoring forces and inertial effects in the frequency-domain formulation of elastodynamics. Specifically, the operator \( A_{\mathrm{el}} \) is coercive whenever the contribution from the real part of the stiffness tensor dominates the inertial term \( -\rho \omega^2 \mathbf{v} \), as quantified in equation~\eqref{eq:elastic_coercivity_bound}.
		
		Physically, this means that the system stores sufficient elastic energy to resist oscillatory motion, ensuring that wave propagation remains stable and bounded. The condition \( \frac{c_{\min}}{C_{\Omega}^2} > \rho \omega^2 \) corresponds to a low-frequency regime in which elastic forces are strong enough to prevent resonance or instability. Coercivity in this context guarantees that the variational formulation is well-posed and that the system does not exhibit unphysical amplification of energy.
	\end{remark}

	\begin{remark}[Physical interpretation of coercivity for the full poroelastic operator]
		The coercivity of the operator \( \calA_{11} \), defined in \eqref{Poroelastic_A11}, reflects a fundamental physical property of the poroelastic system: its ability to dissipate energy through internal friction. Physically, coercivity ensures that wave propagation in the medium is stable and that energy does not grow unboundedly. This corresponds to damping mechanisms that arise from fluid–solid interactions, which convert mechanical energy into heat or other forms of loss.
		
		In the mathematical formulation, this dissipative behavior is captured by the imaginary part of the mobility tensor \( \mathbf{B}(\omega) \). The term \( i\,\omega^3 \rho_f^2\,\mathbf{B}(\omega)\mathbf{v} \) models dynamic coupling between the fluid and solid phases, and its contribution to the real part of the energy inner product is strictly positive under the assumption that \( \operatorname{Im}\mathbf{B}(\omega) \) is uniformly positive semidefinite.
		
		This assumption, stated in equation~\eqref{Poroelastic_A11_Bmin}, ensures that the mobility term enhances the coercivity of \( \calA_{11} \). It reflects the physically reasonable condition that energy dissipation occurs in all directions of motion. Together, these properties guarantee that the variational formulation associated with \( \calA_{11} \) is stable and well-posed.
	\end{remark}
	
	\begin{lemma}[Coercivity of $\mathcal{A}_{22}$]
		
		For $\omega > 0$ and $\dot p \in H^1_0$, we define
		\[
		\mathcal{A}_{22}\dot p := \frac{1}{M}\dot p - \frac{1}{i\omega}\nabla\cdot(\mathbf{B}(\omega)\nabla\dot p), \quad
		\mathbf{B}_\mathrm{H}(\omega) := \frac{\mathbf{B}(\omega) - \mathbf{B}(\omega)^\dagger}{2i}.
		\]
		Assume that a.e. in $\Omega$, it holds that $\operatorname{Re}(1/M) \ge \beta_* \ge 0$ and $\mathbf{B}_\mathrm{H}(\omega) \succeq \gamma_*\,\mathbf{I}$ for some $\gamma_* \ge 0$. 
		Then, it holds that
		\[
		\operatorname{Re}\,\langle \mathcal{A}_{22}\dot p,\,\dot p\rangle
		\ge \alpha_{22}(\om)\|\dot p\|^2, \quad \alpha_{22}(\om):=\left(\beta_*+\frac{\gamma_*}{\omega}C_{\Omega}^{-2}\right), 
		\]
		with $C_{\Omega}$ being the Poincar\'e constant.
	\end{lemma}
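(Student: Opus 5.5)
The plan is to compute $\langle \mathcal{A}_{22}\dot p,\dot p\rangle$ directly for $\dot p\in H^1_0$ and treat the storage and diffusion parts separately. For the storage term we have $\langle \tfrac1M\dot p,\dot p\rangle=\int_\Omega \tfrac1M|\dot p|^2\,dx$. Its real part is $\int_\Omega\operatorname{Re}(1/M)|\dot p|^2\,dx$, which is at least $\beta_*\|\dot p\|^2$ by the pointwise assumption.

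For the diffusion term we integrate by parts. Since $\dot p\in H^1_0$, the boundary terms vanish, and with the sesquilinear convention this gives
\[
\Big\langle -\tfrac{1}{i\omega}\nabla\cdot(\mathbf{B}\nabla\dot p),\dot p\Big\rangle=\tfrac{1}{i\omega}\int_\Omega (\mathbf{B}\nabla\dot p)\cdot\overline{\nabla\dot p}\,dx=-\tfrac{i}{\omega}\,\langle \mathbf{B}\nabla\dot p,\nabla\dot p\rangle .
\]
For fixed $\xi=\nabla\dot p(x)\in\mathbb{C}^d$, write $\mathbf{B}=\mathbf{B}_{\mathrm{A}}+i\mathbf{B}_\mathrm{H}$ with $\mathbf{B}_{\mathrm{A}}:=(\mathbf{B}+\mathbf{B}^\dagger)/2$. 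Both $\mathbf{B}_{\mathrm{A}}$ and $\mathbf{B}_\mathrm{H}$ are Hermitian, so $\xi^*\mathbf{B}_{\mathrm{A}}\xi$ and $\xi^*\mathbf{B}_\mathrm{H}\xi$ are real. Hence
\[
\operatorname{Re}\big(-\tfrac{i}{\omega}\,\xi^*\mathbf{B}\xi\big)=\tfrac{1}{\omega}\,\xi^*\mathbf{B}_\mathrm{H}\xi\ge \tfrac{\gamma_*}{\omega}|\xi|^2,
\]
using $\omega>0$ and $\mathbf{B}_\mathrm{H}\succeq\gamma_*\mathbf{I}$. Integrating over $\Omega$ gives $\operatorname{Re}$ of the diffusion term $\ge \tfrac{\gamma_*}{\omega}\|\nabla\dot p\|^2$. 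Poincaré's inequality on $H^1_0$, $\|\dot p\|\le C_\Omega\|\nabla\dot p\|$, then bounds this below by $\tfrac{\gamma_*}{\omega}C_\Omega^{-2}\|\dot p\|^2$. Adding the two contributions yields $\alpha_{22}(\omega)$.

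The main obstacle is the sign and convention bookkeeping in the diffusion term. One has to check carefully that $(\mathbf{B}\xi)\cdot\bar\xi$ equals $\xi^*\mathbf{B}^{\top}\xi$ rather than $\xi^*\mathbf{B}\xi$. This is harmless because $\mathbf{B}=\mathbf{B}^\top$ by the Onsager symmetry assumed in Section~2, so the two agree. One must also confirm that $\operatorname{Re}(-i z)=\operatorname{Im} z$, which means it is exactly the anti-Hermitian part $\mathbf{B}_\mathrm{H}$ (rather than the Hermitian part) that produces the positive contribution; the sign of the factor $1/(i\omega)$ is what makes this work. I will also note that the Hermitian part $\mathbf{B}_{\mathrm{A}}$ contributes only a purely imaginary quantity, by the same mechanism as in Step~3 of the proof of \Cref{thm: main theorem}, so it drops out of the real part entirely.
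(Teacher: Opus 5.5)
Your proof is correct and follows the paper's argument essentially step for step: integrate by parts, take real parts so that only $\operatorname{Re}(1/M)$ and $\frac{1}{\omega}\mathbf{B}_\mathrm{H}$ survive, then apply the pointwise bounds and Poincar\'e's inequality. One small correction to your bookkeeping: $(\mathbf{B}\xi)\cdot\overline{\xi}=\sum_{j,k}\overline{\xi_j}\,B_{jk}\,\xi_k=\xi^*\mathbf{B}\xi$ already, so no transpose appears, and the Onsager symmetry $\mathbf{B}=\mathbf{B}^\top$ (which is not among the lemma's hypotheses) is not needed.
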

	
	\begin{proof}
		For $\dot p\in H_{0}^{1}$, after integration by parts gives
		\begin{equation*}
			\langle \mathcal{A}_{22}\dot p,\,\dot p\rangle
			= \int_\Omega \frac{1}{M}|\dot p|^2\,d\bm{x}
			+ \frac{1}{i\omega}\int_\Omega \mathbf{B}(\omega) \cdot \nabla\dot p \cdot \overline{\nabla\dot p}\,d\bm{x}. 
		\end{equation*}
		We take the real part of the inner product
		\begin{equation*}
			\operatorname{Re}\,\langle \mathcal{A}_{22}\dot p,\,\dot p\rangle
			= \int_\Omega \operatorname{Re}\left(\frac{1}{M}\right)|\dot p|^2\,d\bm{x}
			+ \frac{1}{\omega}\int_\Omega \mathbf{B}_\mathrm{H}(\omega) \cdot \nabla\dot p \cdot \overline{\nabla\dot p}\,d\bm{x}. 
		\end{equation*}
		By the assumption on $\mathbf{B}_\mathrm{H}(\omega)$ we have 
		\begin{equation*}
			\int_\Omega \mathbf{B}_\mathrm{H}(\omega) \cdot \nabla\dot p \cdot \overline{\nabla\dot p}\,d\bm{x}
			\ge \gamma_* \|\nabla\dot p\|^2. 
		\end{equation*}
		It follows that
		\begin{equation*}
			\operatorname{Re}\,\langle \mathcal{A}_{22}\dot p,\,\dot p\rangle
			\ge \beta_*\,\|\dot p\|^2
			+ \frac{\gamma_*}{\omega}\,\|\nabla\dot p\|^2. 
		\end{equation*}
		Next by using the \'e inequality with constant $C_{\Omega}$ we get
		\begin{equation*}
			\|\nabla\dot p\|^2 \ge C_{\Omega}^{-2}\|\dot p\|^2, 
		\end{equation*}
		Therefore we have
		\begin{equation*}
			\operatorname{Re}\,\langle \mathcal{A}_{22}\dot p,\,\dot p\rangle
			\ge \left(\beta_*+\frac{\gamma_*}{\omega}C_{\Omega}^{-2}\right)\|\dot p\|^2. 
		\end{equation*}
	\end{proof}
	
	\begin{remark}[Physical interpretation of coercivity]
		$\mathcal{A}_{22}$ is coercive for all $\omega > 0$ if the assumptions in Lemma 8 hold. The coercivity constant
		\[
		\alpha_{22} = \beta_* + \frac{\gamma_*}{\omega} C_{\Omega}^{-2}
		\]
		shows that storage dominates at high frequencies (via $\beta_*$), while dissipation dominates at low frequencies (via $\gamma_*$). The Hermitian tensor $\mathbf{B}_\mathrm{H}(\omega)$ is real and positive definite, representing the imaginary (dissipative) part of the mobility tensor. This ensures monotonic energy dissipation in the sense of the sesquilinear form. Consequently, the system either stores or dissipates energy, guaranteeing bounded pressure fluctuations and dynamic stability.
	\end{remark}

	\section{Properties of the off-diagonal block operators}\label{sec: appendix A}
	
	\subsection{Quasi skew-adjoint relationship between coupling operators}
	
	We use the term \emph{quasi skew-adjoint} to indicate that the coupling operators become skew-adjoint in the quasi-static limit, while this structure is lost at finite frequencies due to dissipative effects.
	\color{black}

	\begin{lemma}[Skew-adjointness of quasi-static coupling operators]\label{lem: adjoint skew}
		Let $\mathcal{A}_{12}^{\alpha}$ and $\mathcal{A}_{21}^{\alpha}$ denote the quasi-static parts of the coupling operators in the velocity--pressure rate formulation of Biot's equations, defined by
		\begin{align*}
			\mathcal{A}_{12}^{\alpha}\dot{p} = \nabla \cdot (\boldsymbol{\alpha}\,\dot{p}),\quad
			\mathcal{A}_{21}^{\alpha}\mathbf{v} = \boldsymbol{\alpha} : \boldsymbol{\varepsilon}(\mathbf{v}),
		\end{align*}
		where $\boldsymbol{\alpha}$ is a symmetric second-order tensor and $\boldsymbol{\varepsilon}(\mathbf{v}) = \tfrac{1}{2}(\nabla \mathbf{v} + \nabla \mathbf{v}^\top)$ is the strain rate tensor. Assume homogeneous boundary conditions such that all boundary integrals vanish. Then $\mathcal{A}_{12}^{\alpha}$ and $\mathcal{A}_{21}^{\alpha}$ satisfy the skew-adjoint relationship
		\[
		\langle \mathcal{A}_{12}^{\alpha}\dot{p}, \mathbf{v} \rangle = - \langle \dot{p}, \mathcal{A}_{21}^{\alpha}\mathbf{v} \rangle,
		\]
		with respect to the $L^2$ inner product.
	\end{lemma}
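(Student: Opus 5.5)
The plan is to prove the identity by a single integration by parts (the divergence theorem), followed by an index computation that uses the symmetry of $\boldsymbol{\alpha}$ to turn the full gradient into the symmetric gradient. I work in index notation with the convention $(\nabla\cdot\mathbf{T})_j = \partial_i T_{ij}$; since $\boldsymbol{\alpha}$ is symmetric, the choice of contracted index does not matter. I first take $\dot p$ and $\mathbf{v}$ smooth (say $\mathbf{v}\in C_c^\infty$), so all manipulations are classical. Writing out the left-hand side gives
\[
\langle \mathcal{A}_{12}^{\alpha}\dot p,\mathbf{v}\rangle = \int_\Omega \partial_i(\alpha_{ij}\dot p)\,\overline{v_j}\,\mathrm{d}\mathbf{x}.
\]
Integrating by parts moves the derivative onto $\overline{v_j}$ and produces a boundary term $\int_{\partial\Omega}\alpha_{ij}\dot p\, n_i\overline{v_j}\,\mathrm{d}s$. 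This term vanishes by the homogeneous boundary assumption of the statement, which is automatic when $\mathbf{v}\in\bm{H}_0^1$. What remains is
\[
-\int_\Omega \alpha_{ij}\,\dot p\,\partial_i\overline{v_j}\,\mathrm{d}\mathbf{x}.
\]

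The next step replaces $\partial_i v_j$ by the strain. Because $\alpha_{ij}=\alpha_{ji}$, contracting $\boldsymbol{\alpha}$ with the antisymmetric part of $\nabla\mathbf{v}$ gives zero. Hence $\alpha_{ij}\partial_i\overline{v_j} = \alpha_{ij}\overline{\varepsilon_{ij}(\mathbf{v})} = \overline{\boldsymbol{\alpha}:\boldsymbol{\varepsilon}(\mathbf{v})}$, where the last equality uses that $\boldsymbol{\alpha}$ is real-valued (I make this reading explicit as an assumption). The integral therefore equals $-\int_\Omega \dot p\,\overline{\boldsymbol{\alpha}:\boldsymbol{\varepsilon}(\mathbf{v})}\,\mathrm{d}\mathbf{x} = -\langle \dot p,\mathcal{A}_{21}^{\alpha}\mathbf{v}\rangle$, which is the claimed identity.

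The final step extends the identity by density to $\dot p\in H^1$ and $\mathbf{v}\in\bm{H}_0^1$. I assume $\boldsymbol{\alpha}\in W^{1,\infty}$, so that $\nabla\cdot(\boldsymbol{\alpha}\dot p)\in\bm{L}^2$. Both sides are then continuous sesquilinear forms on $H^1\times\bm{H}_0^1$ (this continuity also underlies the boundedness of the coupling operators discussed in the paper), so the identity passes to the limit.

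The main obstacle is regularity and conjugation bookkeeping rather than the calculation itself. One needs enough smoothness of $\boldsymbol{\alpha}$ for the weak product rule to hold, and real-valuedness of $\boldsymbol{\alpha}$ to move it through the complex conjugation. If $\boldsymbol{\alpha}$ were complex, the same computation would give $-\langle\dot p,\overline{\boldsymbol{\alpha}}:\boldsymbol{\varepsilon}(\mathbf{v})\rangle$, so the skew-adjoint relation would hold only with $\boldsymbol{\alpha}$ replaced by its conjugate, that is, in the Hermitian sense.
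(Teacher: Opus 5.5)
Your proposal is correct and follows the same route as the paper: one integration by parts with the boundary term discarded, then the symmetry of $\boldsymbol{\alpha}$ to replace $\nabla\mathbf{v}$ by $\boldsymbol{\varepsilon}(\mathbf{v})$. Your explicit conjugation bookkeeping and your density/regularity step are refinements the paper omits, not a different argument; the paper's proof writes the pairings without conjugation, so it tacitly uses that $\boldsymbol{\alpha}$ is real, and as you note this is genuinely required for the sesquilinear identity.
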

	
	\begin{proof}
		Consider the $L^2$ inner product of $\mathcal{A}_{12}^{\alpha}\dot{p}$ with $\mathbf{v}$:
		\[
		\langle \mathcal{A}_{12}^{\alpha}\dot{p}, \mathbf{v} \rangle
		= \int_{\Omega} \big( \nabla \cdot (\boldsymbol{\alpha}\,\dot{p}) \big) \cdot \mathbf{v}\, d\bm{x}.
		\]
		Applying the divergence theorem and assuming that the boundary term vanishes under the given conditions, we obtain
		\[
		\int_{\Omega} \big( \nabla \cdot (\boldsymbol{\alpha}\,\dot{p}) \big) \cdot \mathbf{v}\, d\bm{x}
		= - \int_{\Omega} (\boldsymbol{\alpha}\,\dot{p}) : \nabla \mathbf{v}\, d\bm{x}.
		\]
		Since $\boldsymbol{\alpha}$ is symmetric, the double contraction with $\nabla \mathbf{v}$ reduces to the contraction with the symmetric part:
		\[
		(\boldsymbol{\alpha}\,\dot{p}) : \nabla \mathbf{v} = \dot{p}\, (\boldsymbol{\alpha} : \boldsymbol{\varepsilon}(\mathbf{v})).
		\]
		Thus,
		\[
		\langle \mathcal{A}_{12}^{\alpha}\dot{p}, \mathbf{v} \rangle
		= - \int_{\Omega} \dot{p}\, (\boldsymbol{\alpha} : \boldsymbol{\varepsilon}(\mathbf{v}))\, d\bm{x}
		= - \langle \dot{p}, \mathcal{A}_{21}^{\alpha}\mathbf{v} \rangle.
		\]
		This proves the skew-adjointness of $\mathcal{A}_{12}^{\alpha}$ and $\mathcal{A}_{21}^{\alpha}$ under the stated assumptions.
	\end{proof}

	\begin{lemma}[Adjointness of the dynamic $B$--coupling terms]
		Let $\bmv\in \bm{H}_{0}^{1}$ and $\dot{p}\in H^{1}_{0}$. Consider the frequency-dependent dynamic coupling operators
		\[
		(A_{12}^B \dot p)_j := -\,i\omega \rho_f\, B_{j k}\,\partial_k \dot p,
		\qquad
		A_{21}^B \mathbf{v} := -\,i\omega \rho_f\, \partial_k\!\big(B_{k j}\,v_j\big),
		\]
		with a sufficiently regular tensor field $B(\omega,\mathbf{x})\in\mathbb{C}^{d\times d}$.
		Then the off--diagonal operators satisfy
		\[
		A_{12}^B \;=\; (A_{21}^B)^{*}
		\quad\Longleftrightarrow\quad
		B_{k j} = \overline{B_{j k}},
		\]
		with respect to the complex $L^2$ inner products
		In particular, the condition holds when $\bm{B}$ is hermitian.
	\end{lemma}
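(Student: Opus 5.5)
The statement concerns the last lemma: adjointness of the dynamic $B$-coupling terms. The plan is to compute $\langle A_{12}^B \dot p, \mathbf{v}\rangle$ and $\langle \dot p, A_{21}^B \mathbf{v}\rangle$ for arbitrary $\dot p\in H_0^1$ and $\mathbf{v}\in\bm{H}_0^1$. The two expressions are then compared after integrating by parts. Homogeneous traces kill all boundary terms.

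\textbf{Left side.} Write out
\[
\langle A_{12}^B \dot p, \mathbf{v}\rangle = -i\omega\rho_f\int_\Omega B_{jk}\,\partial_k\dot p\,\overline{v_j}\,d\bm{x}.
\]

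\textbf{Right side.} We have
\[
\langle \dot p, A_{21}^B\mathbf{v}\rangle = \int_\Omega \dot p\,\overline{(-i\omega\rho_f)\,\partial_k(B_{kj}v_j)}\,d\bm{x} = i\omega\rho_f\int_\Omega \dot p\,\partial_k\big(\overline{B_{kj}}\,\overline{v_j}\big)\,d\bm{x}.
\]
Here $\omega$ and $\rho_f$ are real, so conjugating $-i$ gives $+i$. The divergence theorem applies because $\dot p$ vanishes on $\partial\Omega$ and $B$ is sufficiently regular ($W^{1,\infty}$ suffices for $B_{kj}v_j\in H^1$). This turns the right side into
\[
-i\omega\rho_f\int_\Omega \overline{B_{kj}}\,\partial_k\dot p\,\overline{v_j}\,d\bm{x}.
\]

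\textbf{Comparison.} Subtracting the two expressions gives
\[
\langle A_{12}^B \dot p,\mathbf{v}\rangle-\langle \dot p, A_{21}^B\mathbf{v}\rangle = -i\omega\rho_f\int_\Omega \big(B_{jk}-\overline{B_{kj}}\big)\,\partial_k\dot p\,\overline{v_j}\,d\bm{x}.
\]
The sufficiency direction is then immediate: if $B_{kj}=\overline{B_{jk}}$ a.e., the integrand vanishes, so $A_{12}^B=(A_{21}^B)^*$ on the stated domains. The final sentence about Hermitian $\bm{B}$ is just a restatement of this condition.

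\textbf{Necessity (main step).} Suppose the difference vanishes for all test pairs. Set $D_{jk}:=B_{jk}-\overline{B_{kj}}$ (assuming $\omega\neq0$). We must show $D=0$ a.e. Fix indices $j,k$. Take $\mathbf{v}=\psi\,\mathbf{e}_j$ with $\psi\in C_c^\infty$. Take $\dot p=\varphi$ with $\varphi\in C_c^\infty$ and, on a ball, $\varphi(\bm{x})=x_k\chi(\bm{x})$ for a cutoff $\chi\equiv1$ near $\operatorname{supp}\psi$. Then
\[
\nabla\dot p=\mathbf{e}_k \quad\text{on } \operatorname{supp}\psi,
\]
and the identity reduces to
\[
\int_\Omega D_{jk}\,\overline{\psi}\,d\bm{x}=0 \quad\text{for all } \psi\in C_c^\infty(\Omega).
\]
Hence $D_{jk}=0$ a.e. by the fundamental lemma of the calculus of variations.

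The one subtlety is that the adjoint must be taken with domains $\bm{H}_0^1$ and $H_0^1$, in the sense of the sesquilinear identity above, rather than as unbounded operators on $L^2$ with maximal domains. I would state this interpretation explicitly.
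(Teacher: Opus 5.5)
Your proof is correct and follows essentially the paper's route: one integration by parts, then a comparison of coefficients giving $B_{jk}=\overline{B_{kj}}$. The paper integrates by parts on the $A_{12}^B$ side to read off $(A_{12}^B)^*\mathbf{w}=-i\omega\rho_f\,\partial_k(\overline{B_{jk}}w_j)$, while you do it on the $A_{21}^B$ side, which is immaterial. Your localization with $\dot p=x_k\chi$ and $\mathbf{v}=\psi\,\mathbf{e}_j$ supplies the ``only if'' direction that the paper only asserts. Your caveat $\omega\neq 0$ is genuinely needed for that direction (at $\omega=0$ both operators vanish), a point the paper leaves implicit.
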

	
	\begin{proof}
		\emph{Step 1 (integration by parts).}
		For $\bmw\in \bm{H}_{0}^{1}$ and $\dot{p}\in H^{1}_{0}$, we have
		\[
		\begin{aligned}
			\langle \calA_{12}^B \dot p, \mathbf{w}\rangle
			&= \int_\Omega \big(-i\omega\rho_f\, B_{j k}\,\partial_k \dot p\big)\,\overline{w_j}\,d\bm{x}
			= -\,i\omega\rho_f \int_\Omega B_{j k}\,(\partial_k \dot p)\,\overline{w_j}\,d\bm{x}
			\\
			&= i\omega\rho_f \int_\Omega \dot p\,\partial_k\big(B_{j k}\,\overline{w_j}\big)\,d\bm{x}
			\qquad\text{(by IBP; boundary term vanishes)}
			\\
			&= i\omega\rho_f \int_\Omega \dot p\,\overline{\partial_k\big(\overline{B_{j k}}\,w_j\big)}\,d\bm{x}
			= \omega\rho_f \int_\Omega \dot p\,\overline{(-i)\partial_k\big(\overline{B_{j k}}\,w_j\big)}\,d\bm{x}.
		\end{aligned}
		\]
		
		\emph{Step 2 (adjoint identification).}
		By the definition of the adjoint,
		\[
		\langle A_{12}^B \dot p, \mathbf{w}\rangle
		= \langle \dot p,(A_{12}^B)^{*}\mathbf{w}\rangle
		\quad\Rightarrow\quad
		(A_{12}^B)^{*}\mathbf{w} = -i\omega\rho_f\,\partial_k\big(\overline{B_{j k}}\,w_j\big).
		\]
		On the other hand,
		\[
		A_{21}^B \mathbf{w} = -\,i\omega\rho_f\,\partial_k\big(B_{k j}\,w_j\big).
		\]
		Thus, $A_{12}^B = (A_{21}^B)^{*}$ iff
		\[
		\partial_k\big(B_{k j}\,w_j\big) = \partial_k\big(\overline{B_{j k}}\,w_j\big)\quad\forall\,\mathbf{w}
		\;\;\Longleftrightarrow\;\;
		B_{k j} = \overline{B_{j k}},
		\]
		which is the Hermiticity of $\bm{B}$.
	\end{proof}

	\setcounter{theorem}{1}
	
	\begin{corollary}[The lack of a simple adjoint relationship between the full off-diagonal coupling operators]
		Let
		\[
		\mathcal{A}_{12} = \mathcal{A}_{12}^{\alpha} + \mathcal{A}_{12}^{\mathrm{B}}, 
		\qquad
		\mathcal{A}_{21} = \mathcal{A}_{21}^{\alpha} + \mathcal{A}_{21}^{\mathrm{B}},
		\]
		where the components are defined as in Lemma~6 and Lemma~7. 
		Assume that $\alpha$ is constant and $\mathbf{B}$ is Hermitian. 
		Under homogeneous Dirichlet or Neumann boundary conditions (or decay at infinity), the following relation holds with respect to the complex $L^2$ inner product:
		\[
		\mathcal{A}_{12} = -(\mathcal{A}_{21}^{\alpha})^* + (\mathcal{A}_{21}^{\mathrm{B}})^*.
		\]
	\end{corollary}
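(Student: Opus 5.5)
The plan is to use linearity and the decompositions of $\mathcal{A}_{12}$ and $\mathcal{A}_{21}$ into their $\boldsymbol{\alpha}$-parts and $\mathbf{B}$-parts, and to treat the two parts separately using Lemma~6 and Lemma~7. Since the adjoint of a sum of densely defined operators contains the sum of the adjoints (with equality on the common domain $\bm{H}_0^1\times H_0^1$ relevant here), it suffices to identify each piece of $\mathcal{A}_{12}$ with the adjoint of the corresponding piece of $\mathcal{A}_{21}$, all in the sesquilinear $L^2$ pairing with test functions $\dot p\in H_0^1$ and $\mathbf{w}\in\bm{H}_0^1$. The identity will be verified in the weak sense, i.e.\ as $\langle \mathcal{A}_{12}\dot p,\mathbf{w}\rangle = \langle \dot p, (-\mathcal{A}_{21}^{\alpha}+\mathcal{A}_{21}^{\mathrm B})\mathbf{w}\rangle$ for all such pairs.

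For the $\boldsymbol{\alpha}$-part, I will invoke Lemma~6 directly. It gives $\langle \mathcal{A}_{12}^{\alpha}\dot p,\mathbf{w}\rangle = -\langle \dot p,\mathcal{A}_{21}^{\alpha}\mathbf{w}\rangle$, which is $\mathcal{A}_{12}^{\alpha} = -(\mathcal{A}_{21}^{\alpha})^*$. Two points need checking here. First, Lemma~6 was written without conjugation on $\boldsymbol{\alpha}$, so real-valuedness of $\boldsymbol{\alpha}$ is needed. This follows from its definition \eqref{eq_alpha} via the drained stiffness, or alternatively from constancy together with symmetry. Second, constancy of $\boldsymbol{\alpha}$ lets me write $\nabla\cdot(\boldsymbol{\alpha}\dot p)=\boldsymbol{\alpha}\nabla\dot p$, so no extra gradient-of-coefficient term appears in the integration by parts. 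The boundary term $\int_{\partial\Omega}\dot p\,(\boldsymbol{\alpha}\mathbf{n})\cdot\overline{\mathbf{w}}$ vanishes under the Dirichlet hypothesis. Under Neumann conditions it vanishes because the conormal traction or flux of the pair is zero, and with decay at infinity it vanishes by the standard truncation argument.

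For the $\mathbf{B}$-part, I will apply Lemma~7. It states that $A_{12}^{B}=(A_{21}^{B})^*$ holds exactly when $B_{kj}=\overline{B_{jk}}$, that is, when $\mathbf{B}$ is Hermitian, and this is assumed. The sign here is $+$, in contrast with the $\boldsymbol{\alpha}$-part. The reason is that both $\mathbf{B}$-operators carry the same prefactor $-i\omega\rho_f$, and conjugating $i$ in the adjoint cancels the minus sign produced by integration by parts. I will recheck this sign bookkeeping carefully, because the statement's $+$ depends on it: the integration by parts gives a factor $-1$ and the conjugation of $i$ gives another $-1$.

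Summing the two identities gives $\mathcal{A}_{12} = -(\mathcal{A}_{21}^{\alpha})^*+(\mathcal{A}_{21}^{\mathrm B})^*$. I expect the main obstacle to be the boundary terms in the Neumann and decay cases, where the trace terms do not vanish automatically for arbitrary $H^1$ functions. I will handle this by restricting the pairing to the domain of $\mathcal{A}$, where the natural boundary conditions eliminate the conormal contributions. A secondary point to remark is that the signs of the two parts differ, so $\mathcal{A}_{12}$ is neither $\pm\mathcal{A}_{21}^*$, which is the "lack of a simple adjoint relationship" the corollary is meant to express.
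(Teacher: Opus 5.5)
Your reduction is exactly the paper's proof. You split both coupling operators into their $\boldsymbol{\alpha}$- and $\mathbf{B}$-parts, take $\mathcal{A}_{12}^{\alpha}=-(\mathcal{A}_{21}^{\alpha})^*$ from Lemma~6 and $\mathcal{A}_{12}^{\mathrm{B}}=(\mathcal{A}_{21}^{\mathrm{B}})^*$ from Lemma~7, and add. Your sign bookkeeping for the $\mathbf{B}$-part is right. So is your observation that Lemma~6 tacitly needs $\boldsymbol{\alpha}$ real, since its proof drops a complex conjugate; the paper does not make this point.

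The step that would not work as you describe it is the Neumann case. For real symmetric $\boldsymbol{\alpha}$ and Hermitian $\mathbf{B}$, integrating by parts gives
\[
\langle \mathcal{A}_{12}\dot{p},\mathbf{w}\rangle
= \langle \dot{p}, (-\mathcal{A}_{21}^{\alpha}+\mathcal{A}_{21}^{\mathrm{B}})\mathbf{w}\rangle
+ \int_{\partial\Omega} \dot{p}\,\big((\boldsymbol{\alpha}-i\omega\rho_f\mathbf{B})\mathbf{n}\big)\cdot\overline{\mathbf{w}}\,ds .
\]
Because the coupling operators are first order, the boundary integrand is a product of the traces of $\dot{p}$ and $\mathbf{w}$, not a conormal derivative. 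A homogeneous traction or flux condition therefore does not kill it, and ``natural boundary conditions eliminating conormal contributions'' is not the mechanism.

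The boundary term vanishes only when one of the two fields has zero trace. An example is $\mathbf{w}\in\bm{H}_0^1$, as in $D(\mathcal{A})\subset\bm{H}_0^1\times H^1$ and in the hypotheses of Lemmas~6--7; then the pressure may indeed satisfy a Neumann condition. If neither field has a homogeneous Dirichlet trace, the boundary term survives and the identity is false in general. Restricting to $D(\mathcal{A})$ does rescue you, but because of the Dirichlet trace of the velocity, not because of natural boundary conditions. The paper's proof does not address this either; it simply inherits the vanishing-boundary-term hypotheses of the two lemmas, and you should state the result that way rather than claim the Neumann case outright.

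Two minor slips. Constancy plus symmetry does not make $\boldsymbol{\alpha}$ real (take $i\mathbf{I}$). Constancy is also not needed for Lemma~6, since the integration by parts is applied to $\boldsymbol{\alpha}\dot{p}$ as a whole.
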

	
	\begin{proof}
		By Lemma~6,
		\[
		\mathcal{A}_{12}^{\alpha} = -(\mathcal{A}_{21}^{\alpha})^*,
		\]
		provided $\alpha$ is constant. 
		By Lemma~7 (for Hermitian $\mathbf{B}$),
		\[
		\mathcal{A}_{12}^{\mathrm{B}} = (\mathcal{A}_{21}^{\mathrm{B}})^*.
		\]
		Adding these relations gives
		\[
		\mathcal{A}_{12} = \mathcal{A}_{12}^{\alpha} + \mathcal{A}_{12}^{\mathrm{B}} 
		= -(\mathcal{A}_{21}^{\alpha})^* + (\mathcal{A}_{21}^{\mathrm{B}})^*,
		\]
		which proves the stated relation.
	\end{proof}
	
	\begin{remark}
		The full off-diagonal coupling in dynamic Biot theory combines two fundamentally different structures:
		\emph{Effective stress coupling} ($\alpha$-terms) is skew-adjoint, representing conservative energy transfer between solid and fluid.
		\emph{Darcy-flow coupling} ($B$-terms) is adjoint and positive-definite when $B$ is Hermitian, representing dissipative effects.
		Adjointness of the $B$-terms reflects their role in energy dissipation through viscous drag, which contributes a positive real part to the energy inner product and ensures damping and stability.
		
		This mixed structure explains why the operator matrix cannot be classified as self-adjoint or skew-adjoint.
		It also motivates operator-splitting strategies such as the Strang-splitting approach used by Carcione et al., where the evolution operator is decomposed into a conservative (skew-adjoint) part and a dissipative (positive-definite) part.
		Such splitting preserves stability and energy properties by treating the two components according to their physical nature.
	\end{remark}
	
	\setcounter{theorem}{7}
	
	\subsection{Continuity of coupling operators}
	
	\begin{lemma}[Continuity (equivalently, boundedness) of the coupling operator $\mathcal{A}_{12}$]\label{lem:A12-continuity}
		Assume
		\[
		\boldsymbol{\alpha}\in L^\infty(\Omega;\mathbb{C}^{d\times d})
		\quad\text{with}\quad
		\nabla\!\cdot\boldsymbol{\alpha}\in L^\infty(\Omega;\mathbb{C}^d),
		\qquad
		\mathbf{B}(\omega)\in L^\infty(\Omega;\mathbb{C}^{d\times d}).
		\]
		Define
		\[
		\mathcal{A}_{12}\dot p
		:= \nabla\!\cdot(\boldsymbol{\alpha}\,\dot p) - i\,\omega\,\rho_f\,\mathbf{B}(\omega)\,\nabla \dot p .
		\]
		Then $\mathcal{A}_{12}: H^1\to \bm{L}^{2}$ is a continuous linear operator. In particular, there exists a continuity constant
		\[
		L_{12}(\omega)
		:= \|\nabla\!\cdot\boldsymbol{\alpha}\|_{L^\infty}
		+ \|\boldsymbol{\alpha}\|_{L^\infty}
		+ |\omega|\,\rho_f\,\|\mathbf{B}(\omega)\|_{L^\infty}
		\]
		such that, for all $\dot p\in H^1$,
		\[
		\|\mathcal{A}_{12}\dot p\|
		\;\le\;
		L_{12}(\omega)\,\|\dot p\|_{H^1}.
		\]
		Equivalently, $\mathcal{A}_{12}$ is bounded with $\|\mathcal{A}_{12}\|\le L_{12}(\omega)$.
	\end{lemma}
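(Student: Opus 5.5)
The approach is direct: split $\mathcal{A}_{12}\dot p$ into its effective-stress part and its mobility part, bound each in $\bm{L}^{2}$ by an $L^\infty$ coefficient times an $L^2$ norm of $\dot p$ or $\nabla\dot p$, and then collect the terms against $\|\dot p\|_{H^1}$. Linearity is immediate, since every term is linear in $\dot p$ with fixed coefficients. So the content of the statement is the bound, and continuity follows from it.

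\textbf{Step 1 (weak product rule).} For $\dot p\in H^1$ and $\boldsymbol{\alpha}\in L^\infty$ with $\nabla\!\cdot\boldsymbol{\alpha}\in L^\infty$, I first justify, componentwise in the distributional sense,
\[
\big(\nabla\!\cdot(\boldsymbol{\alpha}\dot p)\big)_j=(\partial_k\alpha_{jk})\,\dot p+\alpha_{jk}\,\partial_k\dot p .
\]
I would do this by density. Approximate $\dot p$ by smooth functions $\dot p_n\to\dot p$ in $H^1$; such approximations exist because $\Omega$ is Lipschitz. For smooth $\dot p_n$ the identity is tested against $\varphi\in C_c^\infty$ using the definition of the distributional divergence of $\boldsymbol{\alpha}$ with the smooth multiplier $\dot p_n\varphi$. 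One then passes to the limit, using that the coefficients are in $L^\infty$ and $\dot p_n\to\dot p$, $\nabla\dot p_n\to\nabla\dot p$ in $L^2$. This shows $\nabla\!\cdot(\boldsymbol{\alpha}\dot p)\in\bm{L}^{2}$.

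\textbf{Step 2 (pointwise and $L^2$ estimates).} Hölder's inequality with $L^\infty\times L^2$ gives
\[
\|(\nabla\!\cdot\boldsymbol{\alpha})\dot p\|\le\|\nabla\!\cdot\boldsymbol{\alpha}\|_{L^\infty}\|\dot p\|,\qquad
\|\boldsymbol{\alpha}\nabla\dot p\|\le\|\boldsymbol{\alpha}\|_{L^\infty}\|\nabla\dot p\|,
\]
\[
\|i\omega\rho_f\mathbf{B}(\omega)\nabla\dot p\|\le|\omega|\rho_f\|\mathbf{B}(\omega)\|_{L^\infty}\|\nabla\dot p\|.
\]
Here $\|\cdot\|_{L^\infty}$ on matrix fields means the essential supremum of the pointwise operator norm, and $\rho_f$ is assumed bounded.

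\textbf{Step 3 (collect).} By the triangle inequality and $\|\dot p\|,\|\nabla\dot p\|\le\|\dot p\|_{H^1}$, the three bounds add up to exactly $\|\mathcal{A}_{12}\dot p\|\le L_{12}(\omega)\|\dot p\|_{H^1}$. This bound is equivalent to continuity of a linear map.

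The only delicate point is Step 1, the product rule under the weak assumption that $\nabla\!\cdot\boldsymbol{\alpha}$ is only in $L^\infty$ and not that $\boldsymbol{\alpha}\in W^{1,\infty}$. If the density argument becomes awkward, I would fall back on the stronger assumption $\boldsymbol{\alpha}\in W^{1,\infty}$, for which the Leibniz rule for $W^{1,\infty}\cdot H^1$ is standard. Alternatively, in the constant-$\boldsymbol{\alpha}$ setting used in Corollary 2, the divergence term vanishes outright.
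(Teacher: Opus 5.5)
Your proposal is correct and follows the paper's proof essentially step for step: the weak product rule $\nabla\cdot(\boldsymbol{\alpha}\dot p)=(\nabla\cdot\boldsymbol{\alpha})\dot p+\boldsymbol{\alpha}\nabla\dot p$, $L^\infty\times L^2$ bounds on the three terms, and collection via $\|\dot p\|,\|\nabla\dot p\|\le\|\dot p\|_{H^1}$. The paper simply asserts the product rule in $\mathscr{D}'$; your density argument justifies it and works as written, because it pairs the $L^\infty$ row divergence with $\dot p_n\varphi\in C_c^\infty$, so your fallback to $W^{1,\infty}$ is not needed.
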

	
	\begin{proof}
		For $\dot p\in H^1$, the product rule for weak derivatives yields
		\[
		\nabla\!\cdot(\boldsymbol{\alpha}\,\dot p)
		= (\nabla\!\cdot\boldsymbol{\alpha})\,\dot p + \boldsymbol{\alpha}\,\nabla \dot p
		\quad\text{in } \bm{\mathscr{D}}'.
		\]
		With the stated assumptions,
		\[
		\|(\nabla\!\cdot\boldsymbol{\alpha})\,\dot p\|
		\le \|\nabla\!\cdot\boldsymbol{\alpha}\|_{L^\infty}\,\|\dot p\|,
		\qquad
		\|\boldsymbol{\alpha}\,\nabla \dot p\|
		\le \|\boldsymbol{\alpha}\|_{L^\infty}\,\|\nabla \dot p\|,
		\]
		and
		\[
		\|\mathbf{B}(\omega)\,\nabla \dot p\|
		\le \|\mathbf{B}(\omega)\|_{L^\infty}\,\|\nabla \dot p\|.
		\]
		Combining these and using $\|\dot p\|\le \|\dot p\|_{H^1}$ and
		$\|\nabla \dot p\|\le \|\dot p\|_{H^1}$ gives
		\[
		\|\mathcal{A}_{12}\dot p\|
		\le \Big(\|\nabla\!\cdot\boldsymbol{\alpha}\|_{L^\infty}
		+ \|\boldsymbol{\alpha}\|_{L^\infty}
		+ |\omega|\,\rho_f\,\|\mathbf{B}(\omega)\|_{L^\infty}\Big)\,\|\dot p\|_{H^1}.
		\]
		Since the above inequality shows that $\mathcal{A}_{12}$ is bounded, the Bounded Linear Operator Theorem implies that $\mathcal{A}_{12}$ is also continuous.
	\end{proof}

	\begin{lemma}[Continuity (equivalently, boundedness) of the coupling operator $\mathcal{A}_{21}$]\label{lem:A21-continuity}
		Assume
		\[
		\boldsymbol{\alpha}\in L^\infty(\Omega;\mathbb{C}^{d\times d}),
		\quad
		\nabla\!\cdot\boldsymbol{\alpha}\in L^\infty(\Omega;\mathbb{C}^d),
		\qquad
		\mathbf{B}(\omega)\in L^\infty(\Omega;\mathbb{C}^{d\times d}),
		\quad
		\nabla\!\cdot\mathbf{B}(\omega)\in L^\infty(\Omega;\mathbb{C}^d),
		\]
		for each fixed frequency $\omega\in\mathbb{R}$. Define
		\[
		\mathcal{A}_{21}\mathbf{v}
		:= \nabla\!\cdot(\boldsymbol{\alpha}\,\mathbf{v}) - i\,\omega\,\rho_f\,\nabla\!\cdot(\mathbf{B}(\omega)\,\mathbf{v}),
		\qquad \mathbf{v}\in \bm{H}^{1}.
		\]
		Then $\mathcal{A}_{21}:\bm{H}^{1}\to L^2$ is a continuous linear operator. In particular, there exists a continuity constant
		\[
		L_{21}(\omega)
		:= \|\nabla\!\cdot\boldsymbol{\alpha}\|_{L^\infty}
		+ \|\boldsymbol{\alpha}\|_{L^\infty}
		+ |\omega|\,\rho_f\big(\|\nabla\!\cdot\mathbf{B}(\omega)\|_{L^\infty}
		+ \|\mathbf{B}(\omega)\|_{L^\infty}\big),
		\]
		such that, for all $\mathbf{v}\in\bm{H}^{1}$,
		\[
		\|\mathcal{A}_{21}\mathbf{v}\|
		\;\le\;
		L_{21}(\omega)\,\|\mathbf{v}\|_{H^1}.
		\]
		Equivalently, $\mathcal{A}_{21}$ is bounded with $\|\mathcal{A}_{21}\|\le L_{21}(\omega)$.
	\end{lemma}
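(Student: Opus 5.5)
The approach mirrors the proof of Lemma~\ref{lem:A12-continuity}. We expand each divergence with the weak product rule, bound every resulting term pointwise by an $L^\infty$ coefficient times either $|\mathbf{v}|$ or $|\nabla\mathbf{v}|$, and then collect the constants.

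\textbf{Effective stress part.} For $\mathbf{v}\in\bm{H}^{1}$ we use the product rule in $\mathscr{D}'$, read componentwise:
\[
\nabla\!\cdot(\boldsymbol{\alpha}\,\mathbf{v}) = \partial_k(\alpha_{kj}v_j) = (\partial_k\alpha_{kj})\,v_j + \alpha_{kj}\,\partial_k v_j
= (\nabla\!\cdot\boldsymbol{\alpha})\cdot\mathbf{v} + \boldsymbol{\alpha}:\nabla\mathbf{v}.
\]
This identity is justified by density of smooth functions in $\bm{H}^{1}$. Since $\boldsymbol{\alpha}\in W^{1,\infty}$-type regularity is assumed through $\nabla\!\cdot\boldsymbol{\alpha}\in L^\infty$, multiplication by the coefficients is continuous on $L^2$. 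If only the divergence, and not the full gradient, of $\boldsymbol{\alpha}$ is controlled, we approximate $\mathbf{v}$ by $C_c^\infty$ fields and pass to the limit in the distributional identity. Both terms on the right are then in $L^2$. The Cauchy--Schwarz inequality applied pointwise gives
\[
\|(\nabla\!\cdot\boldsymbol{\alpha})\cdot\mathbf{v}\|\le\|\nabla\!\cdot\boldsymbol{\alpha}\|_{L^\infty}\|\mathbf{v}\|,
\qquad
\|\boldsymbol{\alpha}:\nabla\mathbf{v}\|\le\|\boldsymbol{\alpha}\|_{L^\infty}\|\nabla\mathbf{v}\|.
\]
For symmetric $\boldsymbol{\alpha}$, the term $\boldsymbol{\alpha}:\nabla\mathbf{v}$ equals $\boldsymbol{\alpha}:\boldsymbol{\varepsilon}(\mathbf{v})$, consistent with the definition of $\mathcal{A}_{21}$ in \eqref{eq:block-operators-vpr}.

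\textbf{Mobility part.} We apply the identical expansion to $\nabla\!\cdot(\mathbf{B}(\omega)\mathbf{v}) = (\nabla\!\cdot\mathbf{B})\cdot\mathbf{v}+\mathbf{B}:\nabla\mathbf{v}$. This uses $\nabla\!\cdot\mathbf{B}(\omega)\in L^\infty$ and, by the symmetry $\mathbf{B}=\mathbf{B}^\top$, the index placement is immaterial. Multiplying by the prefactor $|{-i\omega\rho_f}|=|\omega|\rho_f$ yields the bound $|\omega|\rho_f\big(\|\nabla\!\cdot\mathbf{B}\|_{L^\infty}\|\mathbf{v}\|+\|\mathbf{B}\|_{L^\infty}\|\nabla\mathbf{v}\|\big)$.

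\textbf{Assembly.} The triangle inequality, together with $\|\mathbf{v}\|\le\|\mathbf{v}\|_{H^1}$ and $\|\nabla\mathbf{v}\|\le\|\mathbf{v}\|_{H^1}$, gives $\|\mathcal{A}_{21}\mathbf{v}\|\le L_{21}(\omega)\|\mathbf{v}\|_{H^1}$. Linearity is evident, so boundedness is equivalent to continuity.

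The only delicate point is rigorously justifying the product rule when the coefficient has merely an $L^\infty$ divergence rather than full $W^{1,\infty}$ regularity. I would handle this as follows: take $\mathbf{v}_n\in C^\infty$ converging to $\mathbf{v}$ in $\bm{H}^1$, verify the identity against test functions $\varphi\in C_c^\infty$, and pass to the limit, using that both right-hand terms depend continuously on $\mathbf{v}$ in the $H^1$ norm.
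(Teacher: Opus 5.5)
Your proof is correct and follows the paper's argument essentially verbatim: expand both divergences with the weak product rule, bound each term by an $L^\infty$ coefficient times $\|\mathbf{v}\|$ or $\|\nabla\mathbf{v}\|$, and collect the constants. Your added density justification of the product rule, which the paper takes for granted, is a good touch, but it must use $C^\infty(\Omega)\cap\bm{H}^{1}$ (Meyers--Serrin), as in your last paragraph, rather than the $C_c^\infty$ fields you mention earlier, which are dense only in $\bm{H}^{1}_{0}$.
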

	
	\begin{proof}
		For $\mathbf{v}\in\bm{H}^{1}$, the product rule for weak derivatives yields
		\[
		\nabla\!\cdot(\boldsymbol{\alpha}\,\mathbf{v})
		= (\nabla\!\cdot\boldsymbol{\alpha})\cdot\mathbf{v} + \boldsymbol{\alpha}:\nabla\mathbf{v},
		\qquad
		\nabla\!\cdot(\mathbf{B}(\omega)\,\mathbf{v})
		= (\nabla\!\cdot\mathbf{B}(\omega))\cdot\mathbf{v} + \mathbf{B}(\omega):\nabla\mathbf{v}
		\quad\text{in } \mathscr{D}'.
		\]
		With the stated assumptions,
		\[
		\|(\nabla\!\cdot\boldsymbol{\alpha})\cdot\mathbf{v}\|
		\le \|\nabla\!\cdot\boldsymbol{\alpha}\|_{L^\infty}\,\|\mathbf{v}\|,
		\qquad
		\|\boldsymbol{\alpha}:\nabla\mathbf{v}\|
		\le \|\boldsymbol{\alpha}\|_{L^\infty}\,\|\nabla\mathbf{v}\|,
		\]
		and similarly for $\mathbf{B}(\omega)$:
		\[
		\|(\nabla\!\cdot\mathbf{B}(\omega))\cdot\mathbf{v}\|
		\le \|\nabla\!\cdot\mathbf{B}(\omega)\|_{L^\infty}\,\|\mathbf{v}\|,
		\qquad
		\|\mathbf{B}(\omega):\nabla\mathbf{v}\|
		\le \|\mathbf{B}(\omega)\|_{L^\infty}\,\|\nabla\mathbf{v}\|.
		\]
		Combining these and using $\|\mathbf{v}\|\le \|\mathbf{v}\|_{\bm{H}^1}$ and
		$\|\nabla\mathbf{v}\|\le \|\mathbf{v}\|_{\bm{H}^1}$ gives
		\[
		\|\mathcal{A}_{21}\mathbf{v}\|
		\le \Big(\|\nabla\!\cdot\boldsymbol{\alpha}\|_{L^\infty}
		+ \|\boldsymbol{\alpha}\|_{L^\infty}
		+ |\omega|\,\rho_f(\|\nabla\!\cdot\mathbf{B}(\omega)\|_{L^\infty}
		+ \|\mathbf{B}(\omega)\|_{L^\infty})\Big)\,\|\mathbf{v}\|_{\bm{H}^{1}}.
		\]
		Since the above inequality shows that $\mathcal{A}_{21}$ is bounded, the Bounded Linear Operator Theorem implies that $\mathcal{A}_{21}$ is also continuous.
	\end{proof}

	\section{Alternative Discretization and Implementation}\label{sec: psm}
	
	\subsection{The pseudo-spectral method} 
	
	This paper focuses primarily on the finite element method (FEM) for spatial discretization, as it enables a rigorous operator-theoretic analysis of the coupled multi-physics problem. However, the proposed $L$-stabilized splitting scheme for the two-field velocity–pressure-rate formulation of the Biot–Allard equations is \emph{discretization-agnostic}. To illustrate this, we also consider a pseudo-spectral method (PSM).
	Spectral methods arise from a Galerkin formulation with global basis functions, whereas pseudo-spectral methods provide an efficient approximation based on collocation and FFT-based differentiation \cite{Gottlieb1977,Canuto1988,Fornberg1987,Kosloff1982}. 
	While care is required in the presence of spatial heterogeneities—such as internal interfaces, where filtering or staggered-grid techniques may be employed—pseudo-spectral methods remain highly effective for wave propagation due to their low numerical dispersion and computational efficiency. 
	
	The pseudo-spectral method relies on the observation that spatial derivatives can be evaluated efficiently using the Fourier transform; that is,
	\begin{equation}
		\partial_j u(x) = \mathcal{F}^{-1}\big( i k_j \,\mathcal{F}[u] \big),
		\label{PS_derivative}
	\end{equation} 
	where $\mathbf{k} = (k_1,k_2,k_3)$ denotes the wave vector and $u$ is an arbitrary scalar field or tensor component. 
	In practice, derivatives are computed in three steps: (i) transform the field to Fourier space, (ii) multiply by $i k_j$, and (iii) transform back to physical space. All material parameters and coefficient fields are evaluated pointwise in physical space. This results in an algorithm with $O(N\log N)$ complexity, where $N$ is the number of grid points.

	\subsection{Block operators in abbreviated subscript notation}
	
	To make the tensor structure explicit and compatible with FFT-based evaluation, we use the abbreviated subscript (Auld–Voigt) notation described in \cite{Auld1973, Carcione2022}, representing symmetric second-rank tensors as 6-vectors and fourth-order tensors with corresponding second-order index structure. Small indices $i,j = 1,2,3$ denote spatial components, capital indices $I,J,K = 1,\dots,6$ denote Voigt components, and Einstein summation convention is used.
	
	In this notation, the strain rate-displacement operator and its adjoint are
	\[
	\varepsilon_I = \nabla_{I j} v_j,
	\qquad
	(\nabla \cdot \boldsymbol{\sigma})_i = \nabla_{iJ} \sigma_J.
	\]
	
	The differential operators $\nabla_{Ij}$ and $\nabla_{iJ}$ are represented by constant-coefficient matrices. In three dimensions, using the standard Voigt ordering $I = (11,22,33,23,13,12)$, we define
	\[
	\nabla_{Ij} =
	\begin{pmatrix}
		\partial_1 & 0 & 0 \\
		0 & \partial_2 & 0 \\
		0 & 0 & \partial_3 \\
		0 & \partial_3 & \partial_2 \\
		\partial_3 & 0 & \partial_1 \\
		\partial_2 & \partial_1 & 0
	\end{pmatrix},
	\qquad
	\nabla_{iJ} = (\nabla_{J i})^{\top}.
	\]
	These operators map vector fields to Voigt-form strain and Voigt-form stress rates to their divergence (see, e.g., Auld~\cite{Auld1973} and Carcione~\cite{Carcione2022}).
	
	The block operators in equation~(\ref{eq:block-operators-vpr}) can then be written in physical space as
	\[
	\begin{aligned}
		(A_{11} v)_i 
		&= -\omega^2 \rho^{\mathrm{eff}}_{ij} v_j
		- \nabla_{iJ} \big( C_{JK} \, \nabla_{Kj} v_j \big), \\
		(A_{12} \dot p)_i 
		&= \nabla_{iJ}(\alpha_J \dot p)
		- i\omega \rho_f \, B_{ij} \, \partial_j \dot p, \\
		(A_{21} v) 
		&= \alpha_J \, \nabla_{Jj} v_j
		- i\omega \rho_f \, \partial_i \big(B_{ij} v_j\big), \\
		(A_{22} \dot p) 
		&= \frac{\dot p}{M}
		- \frac{1}{i\omega} \, \partial_i \big( B_{ij} \, \partial_j \dot p \big).
	\end{aligned}
	\]
	This formulation is consistent with the operator framework of Section~3 and makes the gradient–divergence structure explicit for FFT-based evaluation.

	\subsection{Pseudo-spectral evaluation of the operators}
	
	In the pseudo-spectral implementation, all spatial derivatives are evaluated through Fourier transforms, while fields are stored in physical space, as shown in equation \eqref{PS_derivative}. More generally, the Voigt-form operators $\nabla_{Ij}$ and $\nabla_{iJ}$ are evaluated in Fourier space by replacing each occurrence of $\partial_\ell$ with multiplication by $i k_\ell$. This defines the matrices $k_{Ij}$ and $k_{iJ}$ directly from $\nabla_{Ij}$ and $\nabla_{iJ}$ through the substitutions
	\[
	\nabla _{iJ} \;\mapsto\; ik_{iJ}, ~~~~\nabla _{Kl} \;\mapsto\; ik_{Kl},
	\]
	where $k_{iJ}$ and $k_{Kl}$ has the same structures as $\nabla _{iJ}$ and $\nabla _{Kl}$, respectively, but with $\partial _{i}$ replaced by the wave vector component $k_{i}$, $i= 1,3,3$. A similar recipe is used for plane wave analysis of elastic waves in anisotropic solids by Auld \cite{Auld1973} and Carcione also details this in his book \cite{Carcione2022}. 
	
	Applying this rule componentwise yields the following pseudo-spectral approximations of the block operators:
	\[
	\begin{aligned}
		(\mathcal{A}_{11} v)_i 
		&\approx -\omega^2 \rho^{\mathrm{eff}}_{ij} v_j 
		- \mathcal{F}^{-1}\Bigg( i k_{iJ} \,\mathcal{F}\Big[
		C_{JK} \,
		\mathcal{F}^{-1}\big( i k_{Kj} \,\mathcal{F}[v_j] \big)
		\Big]\Bigg), \\
		(\mathcal{A}_{12} \dot p)_i 
		&\approx \mathcal{F}^{-1}\Big( i k_{iJ} \,\mathcal{F}[ \alpha_J \dot p ] \Big)
		- i\omega \rho_f \, B_{ij} \,
		\mathcal{F}^{-1}\big( i k_j \,\mathcal{F}[\dot p] \big), \\
		(\mathcal{A}_{21} v) 
		&\approx \alpha_J \,
		\mathcal{F}^{-1}\big( i k_{Jj} \,\mathcal{F}[v_j] \big)
		- i\omega \rho_f \,
		\mathcal{F}^{-1}\Big( i k_i \,\mathcal{F}[ B_{ij} v_j ] \Big), \\
		(\mathcal{A}_{22} \dot p) 
		&\approx \frac{\dot p}{M}
		- \frac{1}{i\omega} \,
		\mathcal{F}^{-1}\Bigg( i k_i \,\mathcal{F}\Big[
		B_{ij} \,
		\mathcal{F}^{-1}\big( i k_j \,\mathcal{F}[\dot p] \big)
		\Big]\Bigg).
	\end{aligned}
	\]
	
	All coefficient fields (e.g., $\rho^{\mathrm{eff}}_{ij}$, $C_{JK}$, $\alpha_J$, and $B_{ij}$) are applied pointwise in physical space, while the Fourier transform is used exclusively to evaluate spatial derivatives.

	\subsection{Matrix-free implementation of the splitting scheme}
	
	The operator system is written in block form as
	\[
	A = \begin{pmatrix} A_{11} & A_{12} \\ A_{21} & A_{22} \end{pmatrix}.
	\]
	In the $L$-stabilized splitting scheme, each iteration requires solving
	\[
	\begin{aligned}
		(A_{11}+L)\mathbf{v}^{k+1} &= \mathbf{f}^\omega - A_{12}\dot{p}^k + L \mathbf{v}^k,\\
		A_{22}\dot{p}^{k+1} &= q^{\omega}-A_{21}\mathbf{v}^{k+1}.
	\end{aligned}
	\]
	
	These systems are solved in a matrix-free manner using Krylov methods (e.g.\ GMRES \cite{saad1986gmres}), where each operator application is computed via FFT-based differentiation combined with pointwise coefficient multiplication.
	
	\paragraph{Algorithm}
	Given $(\mathbf{v}^k,\dot{p}^k)$:
	\begin{itemize}
		\item Compute $\text{RHS}_v = \mathbf{f}^\omega - A_{12}\dot{p}^k + L\mathbf{v}^k$ and solve $(A_{11}+L)\mathbf{v}^{k+1} = \text{RHS}_v$.
		\item Compute $\text{RHS}_p = -A_{21}\mathbf{v}^{k+1}$ and solve $A_{22}\dot{p}^{k+1} = \text{RHS}_p$.
	\end{itemize}
	
	The dominant cost arises from repeated operator evaluations within the Krylov iterations, each involving a small number of FFT and inverse FFT operations.
	
	\subsection{Practical considerations and limitations}
	
	For variable coefficients, pseudo-spectral evaluation introduces aliasing errors due to nonlinear interactions in Fourier space. These are mitigated using standard techniques such as $2/3$-rule truncation or spectral filtering \cite{Fornberg1987,Kosloff1982}. Adequate resolution and filtering are essential for stability and accuracy.
	
	Absorbing boundary conditions are implemented using perfectly matched layers (PML) via complex coordinate stretching \cite{berenger1994,chew1997}, which effectively suppress spurious reflections.
	
	Compared to FEM, the pseudo-spectral method avoids assembly of large sparse matrices and instead relies on FFT-based operator evaluation, resulting in reduced memory usage and efficient parallelization on structured grids.
	
	While highly efficient for smooth or layered media, pseudo-spectral methods are less flexible for complex geometries, where FEM may be preferable. For constant coefficients, operator identities from the continuous formulation are preserved exactly; for variable coefficients, they hold up to discretization and aliasing error, provided sufficient resolution is used.

		\bibliography{refs}

\end{document}